\documentclass[12pt]{amsart}
\usepackage{amssymb, amsmath, amsfonts}


\setlength{\textwidth}{18cm}\oddsidemargin=-1cm\evensidemargin=-1cm
\setlength{\textheight}{21cm}

\def\phi{\varphi}

\def\a{\alpha}
\def\b{\beta}
\def\g{\gamma}
\def\G{\Gamma}

\def\l{\lambda}
\def\L{\Lambda}

\def\1#1{\overline{#1}}
\def\2#1{\widetilde{#1}}
\def\3#1{\widehat{#1}}
\def\4#1{\mathbb{#1}}
\def\5#1{\mathfrak{#1}}
\def\6#1{{\mathcal{#1}}}

\def\T{{\Theta}}
\newtheorem{theorem}{Theorem}[section]

\newtheorem{corollary}[theorem]{Corollary}

\newtheorem{definition}[theorem]{Definition}
\newtheorem{lemma}[theorem]{Lemma}

\newtheorem{proposition}[theorem]{Proposition}

\theoremstyle{remark}
\newtheorem{example}[theorem]{Example}

\begin{document}
\numberwithin{equation}{section}

\def\bl{\begin{Lem}}
\def\el{\end{Lem}}
\def\bp{\begin{Pro}}
\def\ep{\end{Pro}}
\def\bt{\begin{Thm}}
\def\et{\end{Thm}}
\def\bc{\begin{Cor}}
\def\ec{\end{Cor}}
\def\bd{\begin{Def}}
\def\ed{\end{Def}}
\def\br{\begin{Rem}}
\def\er{\end{Rem}}
\def\be{\begin{example}}
\def\ee{\end{example}}
\def\bpf{\begin{proof}}
\def\epf{\end{proof}}
\def\ben{\begin{enumerate}}
\def\een{\end{enumerate}}
\def\beq{\begin{equation}}
\def\eeq{\end{equation}}

\title[Holomorphic maps ]{Holomorphic maps between closed $SU(\ell,m)$-orbits in Grassmannian}

\author[S.-Y. Kim ]{Sung-Yeon Kim}
\address{Korea Institute for Advanced Study, 85 Hoegiro, Dongdaemun-gu,
Seoul, 02455,
Korea}
\email{sykim8787@kias.re.kr}

\thanks{This research was supported by Basic Science Research Program through the National Research Foundation of Korea(NRF) funded by the Ministry of  Science, ICT and Future Planning(grant number NRF-2015R1A2A2A11001367)}

\keywords{homogeneous CR manifold, CR embedding, totally geodesic embedding, minimal orbit of
a real form}
\subjclass[2010]{
32V40, 32V30, 32V05,
32M10, 14M15}

\def\Label#1{\label{#1}}

\begin{abstract}
Orbits of $SU(\ell, m)$ in a Grassmannian manifold have homogeneous CR structures.
In this paper, we study germs of smooth CR mappings sending a closed orbit of $SU(\ell,m)$
into a closed orbit of $SU(\ell',m')$ in Grassmannian manifolds. We show that if the signature difference
of the Levi forms of two orbits
is not too large, then the mapping can be factored into a simple form and one of the factors extends to a totally geodesic embedding of the ambient Grassmannian into another Grassmannian with respect to the standard metric.
As an application, we give a sufficient condition for a smooth CR mapping sending a closed orbit of $SU(\ell,m)$
into a closed orbit of $SU(\ell',m')$ in Grassmannian manifolds to extend as a totally geodesic embedding of the Grassmannian into another Grassmannian.
\end{abstract}

\maketitle

\section{Introduction}
Rigidity of holomorphic maps between open pieces of a sphere was first studied by Poincar\'{e} \cite{P07}, who proved the rigidity of holomorphic maps sending open piece of sphere into another in dimension 2 and later by Alexander \cite{A74} in arbitrary dimension. This result was generalized
for holomorphic maps between pieces of spheres of different
dimension by Webster \cite{W79}, Faran \cite{Fa86}, Cima-Suffridge \cite{CS83, CS90}, Forstneric \cite{F86, F89}, Huang \cite{H99, H03}, Huang-Ji \cite{HJ01} and Huang-Ji-Xu \cite{HJX06}.

Ball is a bounded symmetric domain of rank one. Rigidity of proper holomorphic maps between general bounded symmetric domains was conjectured by Mok \cite{M89} and proved by Tsai \cite{T93}, showing that they are necessarily totally geodesic with respect to the Bergmann metric if the rank of the source is greater or equal to that of the target. In relation with it, rigidity of holomorphic maps between open pieces of the boundary orbits of bounded symmetric domains was proved by Henkin-Tumanov \cite{H-T} for automorphisms and by Kim-Zaitsev \cite{KZ13,KZ14} for the type I bounded symmetric domains of different rank. We refer the reader
to the work  \cite{CaMo}, \cite{M89, M08, M11} and the references
therein for
various related rigidity problems for holomorphic maps between complex hyperbolic
space forms and Hermitian symmetric spaces.

For CR manifolds with mixed Levi signature, rigidity phonemena for CR
maps between real hyperquadrics and proper holomorphic maps between generalised balls have been studied by Chern-Moser \cite{CM74}, Ebenfelt-Huang-Zaitsev \cite{EHZ04, EHZ05}, Baouendi-Huang \cite{BH05}, Baouendi-Ebenfelt-Huang \cite{BEH09}.
These results were generalised by Ng \cite{Ng}, who studied the rigidity of holomorphic maps between minimal $SU(\ell,m)$-orbits in Grassmannians and proper holomorphic maps between corresponding flag domains.

Let $p,q,\ell$ be positive integers such that $q\leq \ell\leq p$. Denote by $Gr(q,p)$ the Grassmannian of $q$-planes in $\mathbb{C}^{p+q}$ and by
$S_{q,p}^\ell$ the minimal $SU(\ell,m)$-orbit in $Gr(q, p),$ where $m=p+q-\ell$. In \cite{Ng}, Ng showed that maximal complex manifolds in $S_{q,p}^\ell$ are totally geodesic subgrassmannians $Gr(q, \ell)$ and hence they can be parameterized by $S_{\ell,m}^\ell$, which is the Shilov boundary of a type I bounded symmetric domain in $ Gr(\ell, m)$. More generally, $n$-confined subgrassmannians(see \S3 for definition) for $q\leq n\leq \ell$ can be parameterized by $S_{n, p+q-n}^{\ell}$. For each $n$, one can define the universal space of $n$-confined subgrassmannians over $S_{q,p}^\ell.$ These are homogeneous CR manifolds in flag manifolds and play an important roll in the study of CR maps so do the characteristic bundles in the study of proper holomorphic maps between bounded symmetric domains. See \cite{M89} for characteristic bundles and related topics.

Under the condition in \cite{Ng}, maximal complex submanifolds are the same $Gr(q, \ell)$ for source and target orbits which enables one to lift the given holomorphic map as meromorphic maps between the universal spaces of maximal complex submanifolds over the orbits. This lifting shows that the given holomorphic map preserves the characeristic bundles over the maximal complex submanifolds. Then Ng used the result of Mok \cite{M08} to obtain the rigidity of the holomorphic map between minimal orbits.

The goal of this paper is to generalize the results of Ng in the case when maximal complex submanifolds of the source and target orbits are different. First we investigate the CR structures of $S_{q,p}^\ell$ and the universal space of $n$-confined subgrassmannians over it. If $q$ is strictly less than $\ell$, every two points in $S_{q,p}^\ell$ are connected by chains of maximal complex submanifolds. We shows that the rigidity phenomenon propagates along chains of maximal complex submanifolds if the signature difference of the Levi forms of source and target orbits is small.  More precisely, we prove the following.

\begin{theorem}\label{main}
Let $f:(S_{q,p}^\ell,Z)\to (S_{q',p'}^{\ell'},Z')$ be a germ of a smooth transversal CR embedding (See Definition~\ref{cr-trans} for transversality).
Assume that $q>1$ and
\begin{equation}\Label{main-ineq}
\ell'-q'<2(\ell-q).
\end{equation}
Then it follows that $q'\geq q,~p'\geq p$
and after composing with suitable automorphisms of $S_{q,p}^{\ell}$ and $S_{q',p'}^{\ell'}$, that $f$ is of the form
\begin{equation*}
z
\mapsto
(f_1(z), f_2(z))\in Gr(q',L)\times Gr(q', N),
\end{equation*}
where
$f_1$ is a standard embedding of $Gr(q, p)$ into $Gr(q', L)$ for some subspace $L\subset \mathbb{C}^{p'+q'}$
and $f_2$ is a holomorphic map from $S_{q,p}^{\ell}$ into $Gr(q', N)$
contained in $S_{q',p'}^{\ell'}$ with $\dim N\leq \ell'-\ell+q.$
\end{theorem}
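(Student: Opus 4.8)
The strategy is to exploit the fact that $S_{q,p}^\ell$ is foliated by maximal complex submanifolds---each a copy of $Gr(q,\ell)$---and that, since $q<\ell$, any two points of $S_{q,p}^\ell$ are joined by a chain of such submanifolds. The first step is to analyze what $f$ does to a single maximal complex submanifold $M\cong Gr(q,\ell)$ through $Z$. Because $f$ is CR and $M$ is a complex submanifold of the source, $f|_M$ is holomorphic into the target orbit, hence its image lies in some complex submanifold of $S_{q',p'}^{\ell'}$; by the structure theory recalled from \cite{Ng}, that image is contained in an $n$-confined subgrassmannian, so $f|_M$ maps $Gr(q,\ell)$ holomorphically into $Gr(q',\ell'')$ for an appropriate $n$. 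Now I would invoke the transversality hypothesis together with the Levi-signature bookkeeping: transversality forces $f$ to respect enough of the Levi form that the signature inequality \eqref{main-ineq} becomes available as a numerical constraint. The point of the hypothesis $\ell'-q'<2(\ell-q)$ is that it is precisely the threshold below which a holomorphic map of Grassmannians $Gr(q,\ell)\to Gr(q',\ell'')$ that is compatible with the ambient CR geometry must be, up to automorphism, a standard (linear) embedding---essentially a rank/rigidity statement for $Gr(q,\ell)$, in the spirit of the results of Tsai and Mok cited in the introduction. This gives the conclusion on a single leaf: after composing with automorphisms, $f|_M$ is $z\mapsto(f_1(z),f_2(z))$ with $f_1$ standard on $Gr(q,p)$ restricted to the leaf and $f_2$ of the stated small rank.

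The second, and I expect harder, step is propagation: upgrading the normal form from one leaf to all of $S_{q,p}^\ell$. Here I would argue that if the normal form holds on a leaf $M_1$ and $M_2$ is an adjacent leaf meeting $M_1$, then the already-fixed automorphisms that normalize $f$ on $M_1$ also (after a further automorphism fixing $M_1\cap M_2$, or at least fixing enough of it) normalize $f$ on $M_2$, because the transition data between overlapping leaves is controlled by the isotropy group and the standard-embedding factor $f_1$ is rigid. Iterating along a chain connecting $Z$ to an arbitrary nearby point, and checking that the choices of normalizing automorphism are consistent around the chain, yields the global normal form on a neighborhood of $Z$. The inequalities $q'\ge q$ and $p'\ge p$ drop out along the way: $q'\ge q$ from the fact that $f_1$ must be a genuine embedding of $Gr(q,p)$ into $Gr(q',L)$ (so $q'\ge q$ and $\dim L\ge p$), and then $p'\ge p$ because $\dim N\le \ell'-\ell+q$ forces $\dim L=p'+q'-\dim N\ge p'+q'-(\ell'-\ell+q)$, which combined with the signature constraint and $q<\ell$ gives $p'\ge p$; I would organize this counting at the end once the geometric normal form is in hand.

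The main obstacle is the propagation/consistency step: one must show that the automorphism freedom used to normalize $f$ leaf-by-leaf can be chosen coherently, i.e. that there is no monodromy obstruction as one moves along chains of maximal complex submanifolds. I would handle this by working on the universal space of $n$-confined subgrassmannians over $S_{q,p}^\ell$ (introduced in the excerpt), lifting $f$ to a map of these universal spaces as in \cite{Ng}, so that the leaf-wise normalization becomes a single statement about the lifted map; the rigidity of the standard embedding $f_1$ then propagates automatically because it is pinned down by its restriction to a single leaf. A secondary technical point is verifying that the holomorphic extension $f_1$ of the leafwise standard embeddings to all of $Gr(q,p)$ is well defined---this is a Hartogs-type extension across the union of leaves, using that $S_{q,p}^\ell$ is minimal (every point reachable by chains) and that $Gr(q,p)$ is the ambient complexification.
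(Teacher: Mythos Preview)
Your outline has the right geometric ingredients---maximal complex leaves $Gr(q,\ell)$, chain connectivity, the universal space $\mathcal{P}^\ell$, and Mok's rigidity---but the order of operations is inverted relative to what the argument actually requires, and the core technical engine is missing. The paper does \emph{not} begin by applying Grassmannian rigidity to $f|_M$ on a single leaf. Instead, the bulk of the work (\S4) is a moving-frames reduction: one studies the pullbacks $f^*\Phi_a^{~b}$, $f^*\Theta_a^{~J}$ of the target Maurer--Cartan forms and uses the structure equations $d\Phi_a^{~b}=\Theta_a^{~J}\wedge\Theta_J^{~b}\bmod\phi$ to force them into a normal form. The hypothesis $\ell'-q'<2(\ell-q)$ enters here, not as a rigidity threshold for Grassmannian maps, but as a null-space count: in the key step (Lemma~\ref{r}) one compares a Levi-type identity whose left side carries at most $(\ell'-q')-(\ell-q)$ independent null directions against a right side which, if nontrivial, carries at least $\ell-q$; the inequality forces coefficients to vanish. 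Iterating yields $\Phi_a^{~b}=\pm\phi_a^{~b}$ and the Pfaffian system of Proposition~\ref{pfaffian}. The inequalities $q'\ge q$, $\ell-q\le\ell'-q'$, $m-q\le m'-q'$ (hence $p'\ge p$) fall out of this reduction, not from the endgame dimension count you sketch.

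Only after this normal form is established does the leaf analysis begin, and its purpose is different from what you describe. The point of \S5 is to produce a \emph{single} null space $N_\sharp\subset\mathbb{C}^{p'+q'}$, independent of the basepoint $Z$ and of the leaf, with $E_\sharp\subset f(Z)+E_{V_Z}+N_\sharp$ for every maximal leaf through every $Z$ (Lemma~\ref{frame choice}). This fixed $N_\sharp$ is what makes the splitting $f=(f_1,f_2)$ well defined: $f_2$ is the projection to $N_\sharp$. Your propagation/consistency discussion gestures at this but does not identify it; without the prior frame reduction there is no candidate for $N_\sharp$, and without $N_\sharp$ there is no leafwise decomposition to propagate---on a single leaf you would only learn that $f(Gr(q,E))$ sits in some $Gr(q',E_\sharp)$, with no a priori control on how $E_\sharp$ varies. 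Mok's theorem \cite{M08} is invoked only at the very end (Lemma~\ref{flat}), applied to the projection $f_0=\pi_{N_\sharp^\perp}\circ f$ after one checks, again via the Pfaffian system, that $f_0$ preserves the variety of minimal rational tangents on each leaf.
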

As a corollary, we obtain the following.
\begin{corollary}\label{main-cor}
Let $f:S_{q,p}^\ell \to S_{q',p'}^{\ell'}$ be a (globally defined) smooth transversal CR embedding.
Assume that $q>1$ and $\ell'-q'<2(\ell-q)$. Assume further that
\begin{equation}\Label{main-ineq}
q'\times(\ell'-\ell+q-q')<q\times (\ell-q).
\end{equation}
Then $f$ extends to $Gr(q, p)$ as a standard embedding into $Gr(q', p')$.
\end{corollary}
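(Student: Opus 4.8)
The plan is to apply Theorem~\ref{main} to $f$ and then to show that the extra arithmetic hypothesis forces the second factor $f_2$ to be constant, after which $f$ collapses to the standard embedding $f_1$ and the conclusion is immediate.

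\emph{Preliminary reduction.} First I would observe that the statement is vacuous when $q=\ell$: then $\ell-q=0$, so the additional hypothesis reads $q'(\ell'-q')<0$, which is impossible since $q'\ge1$ and $\ell'\ge q'$. So we may assume $q<\ell$; in particular $S_{q,p}^\ell$ is connected and, by the structure theory of \S3, any two of its points are joined by a chain of maximal complex submanifolds. Next, I would apply Theorem~\ref{main} at a point of $S_{q,p}^\ell$. Since $f$ is globally defined on the connected manifold $S_{q,p}^\ell$, the subspaces $L,N\subset\mathbb{C}^{p'+q'}$ and the normalizing automorphisms furnished by the theorem can be taken once and for all (they depend continuously on the base point while ranging over discrete sets of possibilities, hence are locally constant; alternatively one invokes unique continuation for transversal CR embeddings). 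Thus, after composing with fixed automorphisms of $S_{q,p}^\ell$ and of $S_{q',p'}^{\ell'}$ — which extend to automorphisms of the ambient Grassmannians — we may assume that $f=(f_1,f_2)$ on all of $S_{q,p}^\ell$, with $f_1\colon Gr(q,p)\hookrightarrow Gr(q',L)$ a standard embedding and $f_2\colon S_{q,p}^\ell\to Gr(q',N)\cap S_{q',p'}^{\ell'}$ holomorphic, $\dim N\le\ell'-\ell+q$; note $\dim Gr(q',N)=q'(\dim N-q')\le q'(\ell'-\ell+q-q')$.

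\emph{Main step: $f_2$ is constant.} Fix a maximal complex submanifold $M\subset S_{q,p}^\ell$. By \S3, $M$ is a subgrassmannian of complex dimension $q(\ell-q)$, and $f|_M=(f_1|_M,f_2|_M)$ is a holomorphic embedding whose image, being a complex submanifold of $S_{q',p'}^{\ell'}$ of dimension $q(\ell-q)$, lies in a single maximal complex submanifold $M'\cong Gr(q',\ell')$ of the target; moreover $f_1|_M$ already realizes $M$ as a standard totally geodesic subgrassmannian inside the $L$-part of $M'$, while $f_2|_M$ takes values in $Gr(q',N)$, whose dimension $q'(\dim N-q')\le q'(\ell'-\ell+q-q')$ is, by the additional hypothesis, strictly less than $q(\ell-q)=\dim M$. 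I would then invoke the rigidity of holomorphic maps between Grassmannians (Ng \cite{Ng}, with the classification of totally geodesic and linear embeddings used there and in \cite{M08}): since the first component is already a full-dimensional standard subgrassmannian embedding, a nonconstant $f_2|_M$ would contribute to the normal (second fundamental form) directions of $M$ in $M'$ an amount forcing $\dim Gr(q',N)\ge\dim M$, contradicting the strict inequality; hence $f_2|_M$ is constant. Running this over all maximal complex submanifolds and using that $S_{q,p}^\ell$ (for $q<\ell$) is connected by chains of such submanifolds, $f_2$ is constant on $S_{q,p}^\ell$.

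\emph{Conclusion.} Write $f_2\equiv c$. Then $f=(f_1,c)$, and unwinding the inclusion $Gr(q',L)\times\{c\}\hookrightarrow Gr(q',p')$ exhibits $f$ as $f_1$ followed by a fixed standard (linear) embedding $Gr(q',L)\hookrightarrow Gr(q',p')$; the composite is a standard embedding $Gr(q,p)\hookrightarrow Gr(q',p')$, in particular defined on all of $Gr(q,p)$. Undoing the two global automorphisms from the preliminary reduction yields the asserted extension of the original $f$. The hard part is the middle step: making precise how $Gr(q',L)\times Gr(q',N)$ sits inside $Gr(q',p')$ and extracting from the strict dimension inequality a rigidity statement strong enough to rule out a \emph{nonconstant but non-injective} $f_2|_M$; the remainder is bookkeeping with automorphisms and the chain-connectedness of $S_{q,p}^\ell$.
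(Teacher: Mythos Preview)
Your overall strategy matches the paper's exactly: apply Theorem~\ref{main}, show that the second factor $f_2$ is constant on each maximal complex submanifold by comparing dimensions, then use chain-connectedness to conclude it is globally constant. The preliminary reduction and the conclusion are fine.

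The gap is in your ``main step.'' You correctly observe that $\dim Gr(q',N)\le q'(\ell'-\ell+q-q')<q(\ell-q)=\dim M$, but your mechanism for turning this into ``$f_2|_M$ is constant'' is not sound as written. The results of Ng and Mok you cite characterize \emph{standard} embeddings via VMRT-preservation or second fundamental forms; they do not assert that any nonconstant holomorphic map $M\to Gr(q',N)$ forces $\dim Gr(q',N)\ge\dim M$. Your sentence about the second fundamental form ``contributing an amount forcing $\dim Gr(q',N)\ge\dim M$'' is precisely the point you yourself flag as ``the hard part,'' and it does not follow from those references. A priori $f_2|_M$ could be nonconstant with positive-dimensional fibers, and nothing you have invoked rules that out.

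The paper closes this gap with a single clean observation you are missing: a Grassmannian is a Fano manifold of Picard number one, so any holomorphic map from it to a projective variety is either constant or finite-to-one (see \S11 of \cite{M89}). Since a finite map cannot strictly decrease dimension, $f_2|_M\colon Gr(q,E)\to Gr(q',N)$ with $\dim Gr(q',N)<\dim Gr(q,E)$ must be constant. Replace your second-fundamental-form appeal with this Picard-number-one argument and the proof goes through; everything else you wrote (including the chain-connectedness step) is already what the paper does.
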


We remark that Theorem~\ref{main} is proved in \cite{Ng} for $q=q'$ and $\ell=\ell'$. In this case $f_2$ is a constant map and therefore $f$ extends
to a standard embedding of $Gr(q,p)$ into $Gr(q, p').$

\medskip

\medskip

We use the method of moving frames. \S1 and \S2 are devoted to investigate CR structure of $S_{q,p}^\ell$ and its and maximal complex submanifolds.
In \S3, we construct moving frames adapted to $f$. Then in \S4, we reduce the freedom of adapted frames using properties of universal space of maximal complex submanifolds over $S_{q,p}^\ell$, which enables us to apply the result of Mok \cite{M08}.
In the last section, we prove Theorem~\ref{main} and Corollary~\ref{main-cor}.


\section{Preliminaries and adapted frames}\Label{preli}

Let $p,q$ be positive integers such that $q< p$.
For an integer $\ell$ such that $q\leq \ell\leq (p+q)/2$,
define a Hermitian inner product $\langle~,\rangle_{\ell,m}$ in $\mathbb{C}^{p+q}$ by
\begin{equation}\label{basic inner}
\langle u,v\rangle_{\ell,m}:=-(u_1\bar v_1+\cdots+u_\ell\bar v_\ell)+(u_{\ell+1}\bar
v_{\ell+1}+\cdots+u_{p+q}\bar v_{p+q}),
\end{equation}
where $m=p+q-\ell$,
$u=(u_1,\ldots,u_{p+q})$ and $v=(v_1,\ldots,v_{p+q})$.
Define
\begin{equation*}
{S}^{\ell}_{q,p}:=\{Z\in Gr(q,p):\langle ~, \rangle_{\ell,m} |_Z=0 \}.
\end{equation*}
Then $S^{\ell}_{q,p}$ is the unique closed $SU(\ell, m)$-orbit in $Gr(q,p)$.
Note that if $q=\ell$, then $S_{q,p}^q$ is the Shilov boundary of the bounded symmetric domain
$D_{q,p}$ defined by
\begin{equation*}
{D}_{q,p}:=\{Z\in \mathbb{C}^{q\times p}:I_q-Z\overline Z^t>0 \},
\end{equation*}
where $I_q$ is the $q\times q$ identity matrix.

A \emph{Grassmannian frame adapted to} $S^{\ell}_{q,p}$, or simply
$S^{\ell}_{q,p}$-\emph{frame} is a frame $\{ Z_1,\ldots,Z_{p+q}\}$ of
$\mathbb{C}^{p+q}$ with $\det(Z_1,\ldots,Z_{p+q})=1$ such that
\begin{equation}\Label{structure}
\langle Z_\alpha,Z_{p+\beta}\rangle_{\ell,m}=\langle
Z_{p+\a},Z_\b\rangle_{\ell,m}=~\delta_{\alpha\beta},~
 \langle Z_{q+j},Z_{q+k}\rangle_{\ell,m} =\3\delta_{jk},~ \a,\b=1,\ldots,q,~j,k=1,\ldots,p-q
\end{equation}
and
\begin{equation*}\label{structure2}
\langle Z_\L,Z_\G\rangle_{\ell,m}=0~\text{   otherwise,   }
\end{equation*}
where $\3\delta_{jk}=-\delta_{jk}$ if $\min(j,k)\leq \ell-q$,
$\3\delta_{jk}=\delta_{jk}$ otherwise, and the capital Greek indices $\L,\G,\Omega$ etc.\ run
from $1$ to $p+q$, i.e. the scalar product $\langle\cdot,\cdot\rangle_{\ell,m}$
in basis $\{Z_1,\ldots,Z_{p+q}\}$ is given by the matrix
$$
\begin{pmatrix}
0&0&0& I_{q}\\
0&-I_{\ell-q}&0&0\\
0&0&I_{m-q}&0\\
I_{q}&0&0&0\\
\end{pmatrix}.
$$
 We also use the notation
\begin{eqnarray*}
Z:&=&(Z_1,\ldots,Z_q),\\
X=(X_1,\ldots,X_{p-q}):&=&(Z_{q+1},\ldots,Z_{p}),\\
Y=(Y_1,\ldots,Y_q):&=&(Z_{p+1},\ldots,Z_{p+q})
\end{eqnarray*}
so that \eqref{structure} can be rewritten as
\begin{equation*}\Label{structure'}
\langle Z_{\a}, Y_{\b}\rangle_{\ell,m} = \langle Y_{\b}, Z_{\a}\rangle_{\ell,m}= \delta_{\a\b},
\quad \langle X_{j}, X_{k}\rangle_{\ell,m} =\3\delta_{jk}.
\end{equation*}

Let $\mathcal{B}^{\ell}_{q,p}$ be the set of all
$S^{\ell}_{q,p}$-frames. Then $\mathcal{ B}^{\ell}_{q,p}$ can be identified with
$SU(\ell,m)$ by the left action. By abuse of notation, we also denote by $Z$
the $q$-dimensional subspace
of $\mathbb{C}^{p+q}$ spanned by $Z_1,\ldots,Z_q$.
Then we can regard $\mathcal{B}^{\ell}_{q,p}$ as a bundle over $S^{\ell}_{q,p}$
with respect to a natural projection $(Z, X, Y)\to Z$. The Maurer-Cartan form
$\pi=(\pi_\L^{~\G})$ on $\mathcal{B}^{\ell}_{q,p}$ is an $su(\ell, m)$-valued one form given by the equation
\begin{equation}\Label{differential}
dZ_\L=\pi_\L^{~\G}Z_\G
\end{equation}
satisfying the structure equation
\begin{equation*}\Label{struc-eq}
d\pi_\L^{~\G}=\pi_\G^{~\Omega}\wedge\pi_\Omega^{~\G}.
\end{equation*}
We use the block matrix representation
with respect to the basis $(Z,X,Y)$ to write
\begin{equation*}\Label{pi}
\begin{pmatrix}
\pi_{\a}^{~\b} & \pi_{\a}^{~q+j} & \pi_{\a}^{~p+\b}\\
\pi_{q+k}^{~\b} & \pi_{q+k}^{~q+j}  & \pi_{q+k}^{~p+\b}\\
\pi_{p+\a}^{~\b} & \pi_{p+\a}^{~q+j}  & \pi_{p+\a}^{~p+\b}\\
\end{pmatrix}
= :
\begin{pmatrix}
\psi_{\a}^{~\b} & \theta_{\a}^{~j} & \phi_{\a}^{~\b}\\
\sigma_{k}^{~\b} & \omega_{k}^{~j} & \theta_{k}^{~\b}\\
\xi_{\a}^{~\b} & \sigma_{\a}^{~j}   &\3\psi_{\a}^{~\b}\\
\end{pmatrix},
\end{equation*}
which satisfies the symmetry relations
\begin{equation*}\Label{symmetries}
\begin{pmatrix}
\psi_{\a}^{~\b} & \theta_{\a}^{~j} & \phi_{\a}^{~\b}\\
\sigma_{k}^{~\b} & \omega_{k}^{~j} & \theta_{k}^{~\b}\\
\xi_{\a}^{~\b} & \sigma_{\a}^{~j} & \3\psi_{\a}^{~\b}\\
\end{pmatrix}
=-
\begin{pmatrix}
\3\psi_{\bar\b}^{~\bar\a} & \3\delta_{j}^i\theta_{\bar i}^{~\bar\a}  & \phi_{\bar\b}^{~\bar\a}\\
\3\delta_{i}^k\sigma_{\bar\b}^{~\bar i} &\3\delta_{i}^k \omega_{\bar j}^{~\bar i} & \3\delta_{i}^k\theta_{\bar\b}^{~\bar i}\\
\xi_{\bar\b}^{~\bar\a} &\3\delta_{j}^i \sigma_{\bar i}^{~\bar\a} & \psi_{\bar\b}^{~\bar\a}\\
\end{pmatrix}
\end{equation*}
that follow directly by differentiating \eqref{structure}.

The defining equations
of $S^{\ell}_{q,p}$ can be written as
$$\langle Z_\a, Z_\b\rangle_{\ell,m}=0,\quad \a,\b=1,\ldots,q$$
and hence their differentiation yields
\begin{equation}\label{diff-eqn}
\langle dZ_\a ,Z_\b\rangle_{\ell,m}
+\langle
Z_\a,dZ_\b\rangle_{\ell,m}=0.
\end{equation}
By substituting $dZ_\Lambda=\pi_\Lambda^{~\Gamma}Z_\Gamma$ into $(1,0)$ component of \eqref{diff-eqn} we obtain
\begin{equation*}
\phi_\alpha^{~\gamma}\langle Y_\gamma,Z_\beta\rangle_{\ell,m}=
\phi_\alpha^{~\beta}=0,
\end{equation*}
when restricted to the $(1,0)$ tangent space.
Comparing the dimensions, we conclude that the kernel of
$\{\phi_\alpha^{~\beta},\alpha,\beta=1,\ldots,q\}$ forms the CR
bundle of $S^{\ell}_{q,p}$, i.e.
$$\ker(\phi|_Z)=T^{1,0}_Z S^{\ell}_{q,p}\oplus T^{0,1}_Z  S^{\ell}_{q,p}.$$
In other words, $\phi=(\phi_\alpha^{~\beta})$ spans the space of contact
forms on $S^{\ell}_{q,p}$.
Since $$dZ_\alpha=\psi_\alpha^{~\beta}Z_\beta+\theta_\alpha^{~ j}X_j+\phi_\alpha^{~\beta}Y_\beta$$
and $\phi=(\phi_\alpha^{~\beta})$ is a contact form
at $Z=(Z_1,\ldots,Z_q)$, we conlcude that $\phi_{\a}^{~\b}$
and $\theta_{\a}^{~j}$ together form a basis in the space of
all $(1,0)$ forms of $S^{\ell}_{q,p}$.
The Levi form is given by
$$d \phi_\a^{~\b}=\theta_\a^{~j}\wedge\theta_j^{~\b}=-\sum_{j=1}^{\ell-q}\theta_\a^{~j}\wedge\overline{\theta_\b^{~j}}
+\sum_{j=\ell-q+1}^{p-q} \theta_\a^{~j}\wedge\overline{\theta_\b^{~j}}\mod\phi.$$
Therefore if $\ell>q,$ then the image of the Levi map becomes the complex normal bundle of $S_{q,p}^\ell.$

\medskip

For a change of frame given by
$$
\begin{pmatrix}
\2Z\\
\2X\\
\2Y
\end{pmatrix}
:=U
\begin{pmatrix}
Z\\
X\\
Y
\end{pmatrix}
,$$
$\pi$ changes via
$$\widetilde \pi=dU\cdot U^{-1}+U\cdot\pi\cdot U^{-1}.$$
There are several types of frame changes.

\begin{definition}\Label{changes}
{\rm We call a change of frame}
\begin{enumerate}
\item[i)]change of position {\rm if}
$$
\widetilde Z_\alpha=W_\alpha^{~\beta}Z_\beta,\quad
\widetilde Y_\alpha=V_\alpha^{~\beta}Y_\beta,\quad
\widetilde X_j=X_j,
$$
{\rm where $W=(W_\alpha^{~\beta})$ and $V=(V_\alpha^{~\beta})$ are
$q\times q$ matrices satisfying $\overline{V^t}W=I_q$};

\item[ii)]change of real vectors {\rm if}
$$
\widetilde Z_\alpha=Z_\alpha,\quad
\widetilde X_j=X_j,\quad
\widetilde Y_\alpha=Y_\alpha+H_\alpha^{~\beta}Z_\beta,
$$
{\rm where $H=(H_\alpha^{~\beta})$ is a hermitian matrix};

\item[iii)]dilation {\rm if}
$$
\widetilde Z_\alpha=\lambda_{\alpha}^{-1}Z_\alpha,\quad
\widetilde Y_\alpha=\lambda_\alpha Y_\alpha,\quad
\widetilde X_j=X_j,
$$
{\rm where $\lambda_\alpha>0$};

\item[iv)]rotation {\rm if}
$$
\widetilde Z_\alpha=Z_\alpha,\quad
\widetilde Y_\alpha=Y_\alpha,\quad
\widetilde X_j=U_j^{~k}X_k,
$$
{\rm where $(U_j^{~k})$ is an $SU(\ell-q,m-q)$ matrix.}
\end{enumerate}
\end{definition}

Change of position in Definition~\ref{changes} sends $\phi$ and $\theta$ to
$$
\widetilde
\phi_\alpha^{~\beta}=W_\alpha^{~\gamma}\phi_\gamma^{~\delta}W^{*}{}_{\delta}^{~\b},
\quad W^{*}{}_{\delta}^{~\b}=\overline{W_{\beta}^{~\delta}},\quad
\widetilde\theta_\alpha^{~j}=W_\alpha^{~\beta}\theta_\beta^{~j}.
$$
Dilation changes $\phi_\a^{~\b}$, $\theta_\a^{~j}$ to
$$
\widetilde
\phi_\alpha^{~\beta}=\frac{1}{\lambda_\a\lambda_\b}\phi_\a^{~\b}
,\quad
\widetilde\theta_\alpha^{~j}=\frac{1}{\lambda_\a}\theta_\a^{~j},
$$
while rotation remains $\phi_\a^{~\b}$ unchanged and changes $\theta_\a^{~j}$ to
$$
\widetilde\theta_\alpha^{~j}=\theta_\a^{~k}U_k^{~j}.
$$

Finally, we will use the change of frame given by
\begin{equation*}\label{last-change}
\widetilde Z_\alpha=Z_\alpha,\quad
\widetilde X_j=X_{j} + C_j^{~\beta}Z_\beta,\quad
\widetilde Y_\alpha=Y_\alpha+A_\alpha^{~\beta}Z_\beta+B_\alpha^{~j}X_j
\end{equation*}
such that
$$C_j^{~\alpha}+B_j^{~\alpha}=0$$
and
$$A_\alpha^{~\beta} + \overline{A_\beta^{~\alpha}}
+B_\alpha^{~j}B_j^{~\beta}=0,$$
where
$$B_j^{~\alpha}:=\3\delta_{jk}\overline{B_\alpha^{~k}}.$$
Then the new frame $(\widetilde Z,\widetilde Y,\widetilde X)$ is an $S^{\ell}_{q,p}$-frame and the related one forms $\widetilde\phi_\alpha^{~\beta}$ remain the same, while $\widetilde\theta_\alpha^{~j}$ change to
$$\widetilde\theta_\alpha^{~j}=\theta_\alpha^{~j}-\phi_\alpha^{~\beta}B_\beta^{~j}.$$

\section{universal cycle spaces of $S_{q,p}^\ell$}\label{max-complex}
Assume that $q<\ell.$ In this section, we investigate Grassmannian submanifolds in $S^{\ell}_{q,p}$. We refer \S3 of \cite{Ng} as a reference.
Denote by $N(\ell,m)$ the set of all subspaces $F\subset \mathbb{C}^{p+q}$ such that
$$\langle~, \rangle_{\ell,m}|_F=0,$$
where $\langle~,\rangle_{\ell,m}$ is the hermitian inner product given by \eqref{basic inner}.
For $F\in N(\ell,m)$ and a positive integer $n\geq \dim F$, define
$$\Lambda_F^n:=\{E\in N(\ell,m):\dim E=n, F\subset E\}.$$
Note that since $E\in \Lambda_F^n$ is a null space of $\langle~,\rangle_{\ell,m}$, $n$ should be less or equal to $\ell$.

Let $Z\in S_{q,p}^\ell$ and let $E\in \Lambda_Z^n$. Then $Gr(q, E)$ is a complex submanifold in $S_{q,p}^\ell$, which we call \emph{$n$-confined subgrassmannian} containing $Z$. Choose an $S_{q,p}^\ell$-frame $\{Z_\a, X_j, Y_\a\}$ at $Z$. After a frame change by rotation, we may assume that
$$E=Z+\text{\rm span}\{\3 X_j,~j=1,\ldots,n-q\},$$
where
$$\3 X_j:=X_j+X_{p-q-j+1}.$$
Then $Gr(q, E)$ is an integral manifold of a system
$$\phi_\a^{~\b}=\3\theta_\a^{~j}=\theta_\a^{~k}=0,\quad 1\leq \a,\b\leq q,~ 1\leq j\leq n-q<k\leq p-n$$
with maximal independent condition
\begin{equation}\label{max-indep}
\bigwedge_{\a=1}^q(\theta_\a^{~1}\wedge\cdots\wedge\theta_\a^{~n-q})\neq 0,
\end{equation}
where
$$\3\theta_\a^{~j}:=\theta_\a^{~j}-\theta_\a^{~p-q-j+1},~j=1,\ldots,n-q.$$

Define
$$
   \mathcal{P}^n:=\{(Z,E)\in \mathcal{F}(q,n,{ p+q}):Gr(q, E)\subset S_{q,p}^\ell\}.
$$
Then $\mathcal{P}^n$ is a closed $SU(\ell,m)$-orbit in the flag manifold $\mathcal{F}(q, n, p+q)$ and becomes a fiber bundle over $S_{q,p}^\ell$ under the natural projection defined by $(Z, E)\to Z$.
For $Z\in S^{\ell}_{q,p},$ let
$\mathcal{P}^n_Z$
be the fiber of $\mathcal{P}^n$ over $Z$.
Define a map $F_Z$ by
$$F_Z(Z,E)= \pi_{Z^\perp}(E),$$
where $\pi_{Z^\perp}$ is the orthogonal projection from $Z+X$ to $Z^\perp\subset Z+X$, where $X:=\text{\rm span}\{X_j,~j=1,\ldots,p-q\}$.
Then $F_Z$ is a biholomorphic map between $\{E:(Z, E)\in \mathcal{F}(q,n,p+q)\}$ and $Gr(n-q, Z^\perp)$ sending $\mathcal{P}_Z^n$ onto $S^{\ell-q}_{n-q, p-n}.$

Choose an $S_{q,p}^\ell$-frame $\{Z_\a, X_j, Y_\a\}$ such that
$$Z=\text{\rm span}\{Z_\a,~\a=1,\ldots,q\},\quad E=Z+\text{\rm span}\{\3 X_j,~j=1,\ldots,n-q\}.$$
Then we obtain
$$dZ_\a=\sum_{k=1}^{n-q}\frac12\Check\theta_\a^{~k}\3 X_k
-\sum_{k=1}^{n-q}\frac12\3 \theta_\a^{~k}\Check X_{k}
+\sum_{k=n-q+1}^{p-n}\theta_\a^{~k}X_k
+\phi_\a^{~\b}Y_\b\mod Z,\quad\forall \a$$
$$ d\3 X_j=\sum_{k=1}^{n-q}\3 \omega_j^{~k}\Check X_{k}+
\sum_{k=n-q+1}^{p-n}( \omega_j^{~k}+\omega_{p-q-j+1}^{~k}) X_k
+\3 \theta_j^{~\b}Y_\b\mod E,\quad j=1,\ldots,n-q,$$
where
$$\Check\theta_\a^{~j}=\theta_\a^{~j}+\theta_\a^{~p-q-j+1},$$
$$\3 \omega_j^{~k}=\frac12\left(\omega_j^{~k}+\omega_{p-q-j+1}^{~k}-\omega_j^{~p-q-k+1}-\omega_{p-q-j+1}^{~p-q-k+1}\right)$$
and
$$\Check X_j=X_j-X_{p-q-j+1}.$$
Similar to \S\ref{preli}, we can show that the CR structure of $P^n$ is given by
\begin{equation*}\label{cr-max}
\phi_\a^{~\b}=\3\theta_\a^{~k}=\3\omega_j^{~k}=0,\quad \a,\b=1,\ldots,q,\quad j,k=1,\ldots,n-q.
\end{equation*}
By pulling back a maximal complex submanifold in $S_{q,p}^\ell$ via the projection $(Z,E)\to Z$, we obtain that a maximal complex submanifold of $\mathcal{P}^n$ is a flag manifold $\mathcal{F}(q, n,E)$ for some $\ell$-dimensional space $E\in N(\ell,m)$.

\medskip

For a point $Z\in S^{\ell}_{q,p}$, define
$$\mathcal{C}_Z:=\bigcup_{E\in \Lambda_Z^\ell} T_Z^{1,0} Gr(q,E) .$$
\begin{lemma}\label{not-flat}
$\mathcal{C}_Z$ is not contained in any proper (complex) subspace of $T_Z^{1,0}S^{\ell}_{q,p}$.
\end{lemma}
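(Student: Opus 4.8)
The plan is to compute $\mathcal{C}_Z$ explicitly using the moving-frame description already set up and then show the span of all the tangent vectors it contains fills $T^{1,0}_Z S^{\ell}_{q,p}$. Fix an $S^{\ell}_{q,p}$-frame $\{Z_\a,X_j,Y_\a\}$ at $Z$. By the discussion preceding the lemma, for an $\ell$-dimensional $E\in\Lambda_Z^\ell$ one can (after a rotation, which does not move $Z$) arrange $E=Z+\sp\{\3X_j:j=1,\ldots,\ell-q\}$ where $\3X_j=X_j+X_{p-q-j+1}$; the tangent space $T^{1,0}_Z Gr(q,E)$ is then cut out inside $T^{1,0}_Z S^{\ell}_{q,p}$ by $\3\theta_\a^{~k}=\theta_\a^{~j}=0$ for the appropriate ranges, i.e.\ it consists of those $(1,0)$ vectors whose $\theta$-components $(\theta_\a^{~j})$ lie in the $q\times(\ell-q)$ block of ``symmetric'' directions $\theta_\a^{~j}=\theta_\a^{~p-q-j+1}$, with $j>\ell-q$ components forced to vanish. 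Recalling that $\phi_\a^{~\b}$ and $\theta_\a^{~j}$ together span the $(1,0)$-forms on $S^{\ell}_{q,p}$, and that on the CR tangent space $\phi=0$, the coordinates on $T^{1,0}_Z S^{\ell}_{q,p}$ are precisely the $q\times(p-q)$ matrix $\eta=(\eta_\a^{~j})$ dual to $(\theta_\a^{~j})$.

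Next I would identify, as $E$ ranges over $\Lambda_Z^\ell$, the collection of subspaces $T^{1,0}_Z Gr(q,E)\subset T^{1,0}_Z S^\ell_{q,p}$. The reference frame gives one such subspace; a general $E\in\Lambda_Z^\ell$ is obtained by acting on the reference $E$ by the isotropy of $Z$ in $SU(\ell,m)$, which acts on the $\theta$-block through the rotation subgroup $SU(\ell-q,m-q)$ (acting on the index $j$) together with the change of frame $\2\theta_\a^{~j}=\theta_\a^{~j}-\phi_\a^{~\b}B_\beta^{~j}$ (which is trivial on the CR tangent space since $\phi=0$ there) and dilations/position changes (which only rescale/transform the $\a$ index). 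So, up to the $GL$-action on the row index $\a$, the family of subspaces $\{T^{1,0}_ZGr(q,E)\}$ corresponds to the $SU(\ell-q,m-q)$-orbit of the fixed ``$(\ell-q)$-dimensional maximal isotropic-type'' subspace of column-configurations in $\mathbb{C}^{p-q}$. Concretely, each $T^{1,0}_ZGr(q,E)$ is $\{\eta: \text{each row of }\eta\text{ lies in }W_E\}$ for an $(\ell-q)$-dimensional subspace $W_E\subset\mathbb{C}^{p-q}$ that is null (totally isotropic) for the form $\3\delta$; and every such null subspace arises.

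Then $\mathcal{C}_Z=\bigcup_E T^{1,0}_ZGr(q,E)=\{\eta:\text{every row of }\eta\text{ lies in some maximal }\3\delta\text{-isotropic subspace }W\}$, so the linear span of $\mathcal{C}_Z$ is $\{\eta:\text{every row of }\eta\in\sp\bigl(\bigcup_W W\bigr)\}$. Hence it suffices to prove that the union of all maximal $\3\delta$-isotropic subspaces $W\subset\mathbb{C}^{p-q}$ spans all of $\mathbb{C}^{p-q}$ — equivalently, that no nonzero vector in $\mathbb{C}^{p-q}$ is orthogonal (in $\3\delta$) to every maximal isotropic, and that every basis vector lies on some isotropic. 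This is a standard fact about the indefinite Hermitian form $\3\delta$ of signature $(\ell-q,m-q)$ on $\mathbb{C}^{p-q}$ as long as $\ell-q\ge 1$, which holds since we assumed $q<\ell$: given any $v\ne 0$, if $v$ is isotropic it sits in a maximal isotropic; if $v$ is non-isotropic, pick $u\ne 0$ isotropic (possible since the form is indefinite) and not $\3\delta$-orthogonal to $v$ — such $u$ exists because the isotropic cone is not contained in $v^{\perp_{\3\delta}}$ — then $v+tu$ is isotropic for a suitable $t\ne 0$, and both $v+tu$ and $u$ lie in some maximal isotropic containing them, so $v=(v+tu)-tu\in\sp(\bigcup_W W)$. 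Therefore $\sp(\mathcal{C}_Z)=\{\eta:\text{rows free}\}=T^{1,0}_ZS^{\ell}_{q,p}$, proving the lemma.

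The main obstacle is bookkeeping: carefully justifying that as $E$ varies the column-subspace $W_E$ really runs over \emph{all} maximal $\3\delta$-isotropic subspaces (not merely those in a single $SU(\ell-q,m-q)$-orbit of some distinguished one) and that the isotropy of $Z$ acts on the $\theta$-block exactly as claimed; once that identification is clean, the linear-algebra step about indefinite Hermitian forms is routine. A minor subtlety is the degenerate extreme $\ell-q$ versus $m-q$: one only needs one isotropic direction, so indefiniteness of $\3\delta$ — guaranteed by $q<\ell\le(p+q)/2$, whence $1\le\ell-q\le m-q$ — is all that is used.
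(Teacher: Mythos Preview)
Your argument is correct. The key identification---that $T^{1,0}_Z Gr(q,E)$ consists of those $(1,0)$-vectors whose rows lie in the $(\ell-q)$-dimensional isotropic subspace $W_E=E/Z\subset Z^\perp/Z\cong(\mathbb{C}^{p-q},\3\delta)$, and that as $E$ runs over $\Lambda_Z^\ell$ the subspace $W_E$ runs over \emph{all} maximal isotropics---is exactly right (and in fact follows directly from the bijection $E\mapsto E/Z$, without needing to analyze the isotropy action). Your final linear-algebra step is also fine, though it can be shortened: for an indefinite nondegenerate Hermitian form the isotropic cone itself already spans the ambient space (e.g.\ $e_j\pm e_k$ with $j\le\ell-q<k$ are isotropic and span $\{e_j,e_k\}$), and every isotropic vector extends to a maximal isotropic, so $\bigcup_W W$ spans. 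One phrasing wrinkle: when you write that ``both $v+tu$ and $u$ lie in some maximal isotropic containing them'' you appear to claim a \emph{single} isotropic contains both, which is false since $\langle v+tu,u\rangle=\langle v,u\rangle\ne 0$; but your conclusion only needs each to lie in \emph{some} maximal isotropic, which is what you actually use.

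For comparison, the paper argues dually: it takes a $(1,0)$-form $\mu=\mu_j^{~\a}\theta_\a^{~j}$ vanishing on $\mathcal{C}_Z$, evaluates it on the two specific isotropic choices $W=\sp\{e_j+e_{p-q-j+1}\}$ and $W=\sp\{e_j-e_{p-q-j+1}\}$ to obtain $\mu_j^{~\a}\pm\mu_{p-q-j+1}^{~\a}=0$, and then lets the rotation vary to kill all coefficients. This is the covector version of your statement that the maximal isotropics span; your formulation has the advantage of isolating the underlying linear-algebra fact about indefinite Hermitian forms cleanly, while the paper's explicit two-frame computation avoids having to justify the transitivity of the action on maximal isotropics.
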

\begin{proof}
Let
$$\mu=\mu_j^{~\a} \theta_\a^{~j}$$
be a $(1,0)$ form that vanishes on ${\mathcal{C}}_Z$.
To complete the proof, it is enough to show that $\mu= 0$ on $T^{1,0}_ZS_{q,p}^\ell$.
For a given $E\in \Lambda_Z^\ell$, assume that $Gr(q,E)$ is an integral manifold of
$$\phi_\a^{~\b}=\3\theta_\a^{~j}=\theta_\a^{~k}=0,\quad 1\leq \a,\b\leq q,~ 1\leq j\leq\ell-q<k\leq p-\ell.$$
Then by substituting
$$\theta_\a^{~p-q-j+1}=\theta_\a^{~j}-\3\theta_\a^{~j},$$
we obtain
$$\mu=\sum_{j=1}^{\ell-q}(\mu_j^{~\a}+\mu_{p-q-j+1}^{~\a})\theta_\a^{~j}~\mod\{\3\theta_\a^{~j}, \theta_\a^{~k},~1\leq j\leq\ell-q<k\leq p-\ell\}.$$
Therefore if
$\mu=0$ on $T_Z^{1,0}Gr(q,E)$, then by maximal independent condition given in \eqref{max-indep}, we obtain
$$\mu_j^{~\a}+\mu_{p-q-j+1}^{~\a}=0,\quad j=1,\ldots,\ell-q.$$
The same argument for the maximal integral manifold of the system
$$\phi_\a^{~\b}=\theta_\a^{~j}+\theta_\a^{~p-q-j+1}=\theta_\a^{~k}=0,\quad 1\leq \a,\b\leq q,~1\leq j\leq\ell-q<k\leq p-\ell$$
will imply
$$\mu_j^{~\a}-\mu_{p-q-j+1}^{~\a}=0,\quad j=1,\ldots,\ell-q.$$
Hence we obtain
$$\mu_j^{~\a}=\mu^{~\a}_{p-q-j+1}=0,\quad j=1,\ldots,\ell-q.$$
Since $E\in \Lambda_Z$ is arbitrary, we can show that
$$\mu_{j}^{~\a}=0,\quad \forall \a, j,$$
which completes the proof.
\end{proof}

Let $Z_0\in S^{\ell}_{q,p}$ and $E_0\in \Lambda_{Z_0}^\ell$ be fixed and let
$$\mathcal{S}_0:=\{E_0\}.$$
Define an increasing sequence of sets $\mathcal{S}_j$, $j\geq 1$ inductively by
$$\mathcal{S}_j=\bigcup_{F\in \mathcal{S}_{j-1}}\{E\in N(\ell,m):\dim E=\ell,~\dim (E\cap F)\geq \ell-1 \}.$$

\begin{lemma}\label{chain}
$$\bigcup_{E\in \mathcal{S}_{q}} Gr(q,E)=S^{\ell}_{q,p}.$$
\end{lemma}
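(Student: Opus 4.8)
The plan is to show that the orbit $S^{\ell}_{q,p}$ is ``connected by chains of $\ell$-confined subgrassmannians'' in at most $q$ steps, by reducing everything to the fiberwise picture in $Gr(\ell, \mathbb C^{p+q})$ with respect to the form $\langle\cdot,\cdot\rangle_{\ell,m}$. Fix $Z_0$ and $E_0\in\Lambda_{Z_0}^\ell$. The key observation is that each set $\mathcal S_j$ consists of $\ell$-dimensional null spaces obtained from $E_0$ by a sequence of $j$ ``elementary moves,'' where an elementary move replaces a null space $F$ by a null space $E$ with $\dim(E\cap F)\geq \ell-1$; I would first record the standard fact that any two $\ell$-dimensional null spaces $E, E'$ of a Hermitian form of signature $(\ell,m)$ (the maximal isotropic ones) can be joined by a chain of such elementary moves — this is the statement that the ``dual graph'' on maximal isotropic subspaces is connected, which follows because $SU(\ell,m)$ acts transitively on maximal isotropic subspaces and the stabilizer together with the elementary moves generates the whole group. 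So $\bigcup_{j\geq 0}\mathcal S_j$ is \emph{all} of $\{E\in N(\ell,m):\dim E=\ell\}$ that contain some fixed reference data — but I actually need only finitely many, namely $q$, steps, and I need the union of the $Gr(q,E)$ to exhaust $S^{\ell}_{q,p}$, not the set of $E$'s itself.

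Here is the more economical route I would actually take. Pick an arbitrary point $W\in S^{\ell}_{q,p}$; I must produce $E\in\mathcal S_q$ with $W\subset E$, i.e. with $Gr(q,E)\ni W$. Work inside a fixed $S^{\ell}_{q,p}$-frame adapted to $Z_0$ with $E_0 = Z_0 + \sp\{\check X_j\}$ as in the section. The crucial point: given the starting $q$-plane $Z_0\subset E_0$ and the target $q$-plane $W$, choose a basis $Z_1^{(0)},\dots,Z_q^{(0)}$ of $Z_0$ and $W_1,\dots,W_q$ of $W$, and interpolate them ``one coordinate at a time.'' Concretely, I would build intermediate $\ell$-dimensional null spaces $E_0, E_1,\dots, E_q$ with $\dim(E_{i-1}\cap E_i)\geq \ell-1$ and such that $E_i$ contains the $q$-plane $\sp\{W_1,\dots,W_i,Z^{(0)}_{i+1},\dots,Z^{(0)}_q\}$; then $E_q$ contains $W$ and lies in $\mathcal S_q$ by construction. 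Each single step changes only one of the $q$ basis vectors of the contained $q$-plane, and swinging one vector inside the family of maximal isotropic $\ell$-spaces is exactly an elementary move (the two $\ell$-spaces differ in one dimension). One must check at each stage that an $\ell$-dimensional null space through the required $q$-plane actually exists — this is where $q\leq\ell$ and the signature hypothesis enter — but that is precisely the assertion $\Lambda^\ell_V\neq\emptyset$ for any $q$-plane $V\in S^{\ell}_{q,p}$, which is the maximal-isotropic extension property of the indefinite Hermitian form; it holds because the form restricted to $V^\perp/V$ still has enough isotropic directions (signature $(\ell-q,m-q)$ with $\ell - q\geq 0$), and indeed $F_{Z}$ identifies that space with $S^{\ell-q}_{n-q,p-n}$, which is nonempty.

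So the steps, in order, are: (i) reduce to a single target point $W\in S^{\ell}_{q,p}$ and fix a frame; (ii) establish the extension lemma: every $q$-plane in $S^{\ell}_{q,p}$ lies in some $\ell$-dimensional null space (equivalently $\Lambda^\ell_Z\neq\emptyset$), using the signature bookkeeping; (iii) establish the one-step connectivity: two $\ell$-dimensional null spaces containing $q$-planes that differ in a single basis vector can be taken with intersection of dimension $\geq\ell-1$ — verified by an explicit choice of complementary isotropic vectors; (iv) chain these $q$ times, replacing $Z_0$'s basis vectors by $W$'s one at a time, to land in $\mathcal S_q$; (v) conclude $W\subset E_q\in\mathcal S_q$, hence $W\in Gr(q,E_q)$, giving the inclusion $S^{\ell}_{q,p}\subset\bigcup_{E\in\mathcal S_q}Gr(q,E)$; the reverse inclusion is immediate since every $Gr(q,E)$ with $E\in N(\ell,m)$ sits inside $S^{\ell}_{q,p}$. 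The main obstacle I anticipate is step (iii)–(iv): one has to be careful that each intermediate $\ell$-space can simultaneously be made to contain the partially-updated $q$-plane \emph{and} to meet its predecessor in codimension one; the cleanest way to guarantee both is to perform the update inside a single common $(\ell+1)$-dimensional subspace on which the form is degenerate with $1$-dimensional radical, so that the two $\ell$-dimensional null spaces are two hyperplanes through the radical — I would set up that $(\ell+1)$-dimensional ``pencil'' explicitly in coordinates, which makes the $\dim(E_{i-1}\cap E_i)\geq\ell-1$ condition automatic. Transversality of $f$ and the inequality \eqref{main-ineq} play no role here; this lemma is purely about the geometry of the source orbit.
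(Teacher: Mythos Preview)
Your overall strategy --- build a chain $E_0,E_1,\dots,E_q$ of maximal isotropic $\ell$-spaces with $\dim(E_{i-1}\cap E_i)\geq\ell-1$ and $W\subset E_q$ --- is exactly the paper's. But your execution has a genuine gap at step (ii)/(iv). You interpolate the \emph{$q$-planes}: the intermediate object is $P_i=\sp\{W_1,\dots,W_i,Z^{(0)}_{i+1},\dots,Z^{(0)}_q\}$, and you then invoke $\Lambda^\ell_{P_i}\neq\emptyset$. That invocation requires $P_i\in S^\ell_{q,p}$, i.e.\ $P_i$ isotropic; but nothing in your setup forces $\langle W_a,Z^{(0)}_b\rangle_{\ell,m}=0$ for $a\leq i<b$, so in general $P_i$ is \emph{not} isotropic and the extension lemma does not apply. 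You flag the codimension-one intersection as the main obstacle, but the isotropy of the intermediate $q$-planes is the more basic one, and you do not address it.

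The paper sidesteps both issues at once by swapping vectors in the $\ell$-space rather than in the $q$-plane. Write $E_0=\sp\{V_1,\dots,V_\ell\}$ and $Z_1=\sp\{W_1,\dots,W_q\}$; since $E_0$ is \emph{maximal} isotropic, each $W_\a\notin E_0$ pairs nontrivially with some $V_j$, and after a basis change one arranges $\langle W_\a,V_j\rangle_{\ell,m}=\pm\delta_{\a j}$. Then $E_\a:=\sp\{W_1,\dots,W_\a,V_{\a+1},\dots,V_\ell\}$ is automatically isotropic (all cross-pairings vanish because the indices never coincide), automatically satisfies $\dim(E_{\a-1}\cap E_\a)\geq\ell-1$ (only one vector is exchanged), and $E_q\supset Z_1$. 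This single normalization via maximality replaces both your extension step and your pencil construction; if you want to salvage your route, the analogous move is to first choose the bases of $Z_0$ and $W$ so that $\langle W_a,Z^{(0)}_b\rangle_{\ell,m}$ is diagonal, but at that point you are essentially reproducing the paper's argument.
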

\begin{proof}
Let $Z_1\in S^{\ell}_{q,p}$. Write
$$Z_1=\text{\rm span}\{W_1,\ldots,W_q\}.$$
After a frame change, we may assume that there exists $\a_0\leq q$ such that
$$E_0\cap Z_1=\text{\rm span}\{W_\a,~ 1\leq \a\leq \a_0\}.$$
Let
$$E_0=\text{\rm span}\{V_1,\ldots,V_{\ell}\}$$
with
$$V_\a=W_\a,\quad \a=1,\ldots,\a_0.$$
By the maximality of $E$, we obtain that for each $\a>\a_0$, there exists $V_{j_\a}$, $\a>\a_0$ such that
$$\langle W_\a, V_{j_\a}\rangle_{\ell,m}\neq 0.$$
After choosing suitable basis of $Z_1$ and $E_0$, we may assume
$$\langle W_\a, V_{j}\rangle_{\ell,m}=\pm\delta_{\a, j},\quad \a_0<\a\leq q.$$
Then a sequence of subspaces $E_\a$  defined by
$$E_\a:=\text{\rm span}\{W_1,\ldots,W_{\a}, V_{\a+1},\ldots,V_\ell\},\quad \a=1,\ldots,q$$
will satisfy
$$E_\a\in \mathcal{S}_{\a}, \quad \a=1,\ldots,q$$
and
$$E_q\supset Z_1.$$
\end{proof}
Lemma~\ref{chain} shows that every two points $Z_0,Z_1$ in $S_{q,p}^\ell$ are connected by a chain of
maximal complex submanifolds $Gr(q, E_\a),~\a=0,\ldots,q$ of $S_{q,p}^\ell$ with $Z_0\in Gr(q, E_0),~Z_1\in Gr(q, E_q)$
such that
$$\dim( E_{\a-1}\cap E_\a)=\ell-1.$$
Similar to Lemma~\ref{chain}, we can also prove the following lemma whose proof we omit.
\begin{lemma}\label{open}
$$\mathcal{C}_{Z_0}=\left\{t\in T^{1,0}_{Z_0}Gr(q,E):E\in \mathcal{S}_{\ell-q}\cap \Lambda_{Z_0}^\ell\right\}.$$
\end{lemma}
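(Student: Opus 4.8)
The plan is to reduce the assertion to a purely linear-algebraic statement about the sets $\mathcal{S}_j$, and then prove that statement by imitating the chain construction of Lemma~\ref{chain}, but performed relative to the fixed subspace $Z_0$ rather than relative to a varying $\ell$-plane.

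First, since $\mathcal{S}_{\ell-q}\cap\Lambda_{Z_0}^\ell\subseteq\Lambda_{Z_0}^\ell$, the inclusion $\supseteq$ in the asserted equality is immediate from the definition of $\mathcal{C}_{Z_0}$. For the reverse inclusion it suffices to prove
$$\Lambda_{Z_0}^\ell\subseteq\mathcal{S}_{\ell-q},$$
for then every $E\in\Lambda_{Z_0}^\ell$ contributes its whole $T_{Z_0}^{1,0}Gr(q,E)$ to the right-hand side. So, given an arbitrary $E\in\Lambda_{Z_0}^\ell$ (i.e.\ $E\in N(\ell,m)$, $\dim E=\ell$, $Z_0\subset E$), I must produce a chain $E_0=F_0,F_1,\dots,F_k=E$ with $k\leq\ell-q$, each $F_i\in N(\ell,m)$ of dimension $\ell$ containing $Z_0$, and $\dim(F_{i-1}\cap F_i)=\ell-1$; by the inductive definition of the $\mathcal{S}_j$ all the $F_i$ then lie in $\mathcal{S}_i\cap\Lambda_{Z_0}^\ell$, and in particular $E=F_k\in\mathcal{S}_{\ell-q}$.

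To construct the chain, fix a basis $Z_1,\dots,Z_q$ of $Z_0$, extend it to a basis $Z_1,\dots,Z_q,T_{q+1},\dots,T_{q+r}$ of $E_0\cap E$ where $q+r=\dim(E_0\cap E)\geq q$, and complete to bases $\{Z_i,T_j,V_{q+r+1},\dots,V_\ell\}$ of $E_0$ and $\{Z_i,T_j,W_{q+r+1},\dots,W_\ell\}$ of $E$. The crucial point, exactly parallel to the ``suitable basis'' step of Lemma~\ref{chain}, is that the $(\ell-q-r)\times(\ell-q-r)$ matrix $M=\big(\langle V_a,W_b\rangle_{\ell,m}\big)_{a,b>q+r}$ is invertible: a nonzero element of its kernel would, using the sesquilinearity of $\langle\cdot,\cdot\rangle_{\ell,m}$, yield a nonzero $W'=\sum_b\overline{c_b}W_b\in E$ orthogonal to all $V_a$, hence to all of $E_0$; since $W'\in E$ is isotropic, $W'\notin E_0$ would make $E_0+\C W'$ an isotropic subspace of dimension $\ell+1$, which is impossible because $\ell$ is the maximal dimension of a $\langle\cdot,\cdot\rangle_{\ell,m}$-isotropic subspace, so $W'\in E_0\cap E=\sp\{Z_i,T_j\}$, contradicting the choice of the $W_b$. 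Because $M$ is invertible and $\langle\cdot,\cdot\rangle_{\ell,m}$ is sesquilinear, we may replace $(V_a)$ and $(W_b)$ by suitable invertible linear combinations (leaving $Z_i,T_j$ untouched) so that $\langle V_a,W_b\rangle_{\ell,m}=\delta_{ab}$ for $a,b>q+r$, all other pairings among $\{Z_i,T_j,V_a\}$ and among $\{Z_i,T_j,W_b\}$ remaining zero. Then set
$$F_s:=\sp\{Z_1,\dots,Z_q,T_{q+1},\dots,T_{q+r},W_{q+r+1},\dots,W_{q+r+s},V_{q+r+s+1},\dots,V_\ell\},\qquad s=0,\dots,\ell-q-r,$$
so that $F_0=E_0$ and $F_{\ell-q-r}=E$. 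Using the normalized pairings one checks directly that each $F_s$ is an $\ell$-dimensional isotropic subspace containing $Z_0$, that the displayed vectors are linearly independent, and that $F_{s-1}$ and $F_s$ differ only by the swap $V_{q+r+s}\leftrightarrow W_{q+r+s}$, whence $\dim(F_{s-1}\cap F_s)=\ell-1$. Thus $F_s\in\mathcal{S}_s\cap\Lambda_{Z_0}^\ell$, and since $r\geq0$ we get $E=F_{\ell-q-r}\in\mathcal{S}_{\ell-q}\cap\Lambda_{Z_0}^\ell$, which establishes $\Lambda_{Z_0}^\ell\subseteq\mathcal{S}_{\ell-q}$ and with it the lemma.

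The only real obstacle I anticipate is the linear-algebra bookkeeping of the third paragraph: deducing invertibility of $M$ from the maximality of the isotropic subspace $E_0$ (keeping track of which factor the conjugation in $\langle\cdot,\cdot\rangle_{\ell,m}$ acts on), and then carrying out the simultaneous normalization of $M$ together with the verification that each $F_s$ stays isotropic of dimension $\ell$ and that consecutive ones meet in codimension one. All of this is parallel to, and no harder than, the corresponding computation in the proof of Lemma~\ref{chain}, which is why the paper is content to omit it.
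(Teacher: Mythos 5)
Your proposal is correct: proving the stronger inclusion $\Lambda_{Z_0}^\ell\subseteq\mathcal{S}_{\ell-q}$ via the vector-swapping chain $F_0=E_0,\dots,F_{\ell-q-r}=E$ (with the invertibility of the pairing matrix $M$ forced by the maximality of $\ell$-dimensional null subspaces) is exactly the relative version of the argument for Lemma~\ref{chain} that the paper indicates when it omits the proof of Lemma~\ref{open}. The linear-algebra details you flag (invertibility of $M$, normalization $\langle V_a,W_b\rangle_{\ell,m}=\delta_{ab}$, isotropy and dimension of each $F_s$) all check out, so no gap remains.
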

\medskip

For integers $q'\leq \ell'\leq (p'+q')/2\leq p'$, define
$$S^{\ell'}_{q',p'}=\{Z'\in Gr(q', p'):\langle ~,\rangle_{\ell',m'}|_{Z'}=0\},$$
where $m'=p'+q'-\ell'$ and
$$\langle u ,v\rangle_{\ell',m'}:=-(u_1\bar v_1+\cdots+u_{\ell'}\bar v_{\ell'})+(u_{\ell'+1}\bar
v_{\ell'+1}+\cdots+u_{p'+q'}\bar v_{p'+q'}).$$
Note that $S^{\ell'}_{q',p'}$ is the unique closed $SU(\ell',m')$ orbit in $Gr(q', p')$.
We shall denote by $\{Z_a', X'_J, Y'_a\}$ an $S_{q', p'}^{\ell'}$-frame and by $\Phi_a^{~b}$, $\Theta_a^{~J}$ the coframes
of $S^{\ell'}_{q',p'}$ corresponding to the coframes $\phi_\a^{~\b},~ \theta_\a^{~j}$ of $S_{q,p}^{\ell}$. In particular, we obtain
$$dZ_a'=\Theta_a^{~J}X'_J+\Phi_a^{~b}Y'_b\mod Z'.$$

Let $f:S^{\ell}_{q,p}\to S^{\ell'}_{q',p'}$ be a CR map. Then $f$ preserves complex submanifolds, i.e. for any complex submanifold $\mathcal{N}\subset S^{\ell}_{q,p}$, there exists a complex submanifold $\mathcal{N}'\subset S^{\ell'}_{q',p'}$ such that $f(\mathcal{N})\subset \mathcal{N}'$.

\begin{lemma}\label{open2}
Let $Z\in S_{q,p}^\ell$ and $f:S^{\ell}_{q,p}\to S^{\ell'}_{q',p'}$ be a germ of a CR embedding at $Z$.
Suppose there exists a maximal complex submanifold $Gr(q',E')\subset S^{\ell'}_{q',p'}$ such that
\begin{equation}\label{send}
f(Gr(q,E))\subset Gr(q', E'),\quad \forall E\in \Lambda_Z^\ell.
\end{equation}
Then
$$f_*(\mathbb{C}T_ZS^{\ell}_{q,p})\subset T^{1,0}_{f(Z)}S_{q',p'}^{\ell'}+T^{0,1}_{f(Z)}S_{q',p'}^{\ell'}.$$
\end{lemma}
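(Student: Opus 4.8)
The plan is to exploit Lemma~\ref{not-flat}: the tangent spaces $T_Z^{1,0}Gr(q,E)$, as $E$ ranges over $\Lambda_Z^\ell$, already span all of $T_Z^{1,0}S_{q,p}^\ell$. So if I can show that $f_*$ sends each such $T_Z^{1,0}Gr(q,E)$ into $T_{f(Z)}^{1,0}S_{q',p'}^{\ell'}$, then by linearity $f_*(T_Z^{1,0}S_{q,p}^\ell)\subset T_{f(Z)}^{1,0}S_{q',p'}^{\ell'}$, and the claim for $\mathbb{C}T_ZS_{q,p}^\ell = T_Z^{1,0}\oplus T_Z^{0,1}$ follows by conjugating (since $f$ is CR). So the whole statement reduces to: $f_*$ maps holomorphic tangent vectors of each maximal complex submanifold $Gr(q,E)$ to holomorphic tangent vectors of the target orbit. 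But this is immediate from the hypothesis \eqref{send}: the restriction $f|_{Gr(q,E)}$ is a map from the complex manifold $Gr(q,E)$ into the complex manifold $Gr(q',E')$ (both are genuine complex submanifolds of the respective orbits by the discussion in \S\ref{max-complex}), and $f$, being CR, is holomorphic along any complex submanifold of the source; hence $f_*\big(T_Z^{1,0}Gr(q,E)\big)\subset T_{f(Z)}^{1,0}Gr(q',E')\subset T_{f(Z)}^{1,0}S_{q',p'}^{\ell'}$.

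Concretely I would argue as follows. Fix $E\in\Lambda_Z^\ell$. Since $Gr(q,E)$ is a complex submanifold of $S_{q,p}^\ell$ through $Z$ and $f$ is a CR map, the composition $f\circ\iota$, where $\iota:Gr(q,E)\hookrightarrow S_{q,p}^\ell$, is a CR map from a complex manifold, hence holomorphic; by \eqref{send} its image lies in $Gr(q',E')$, which is complex, so $f|_{Gr(q,E)}:Gr(q,E)\to Gr(q',E')$ is holomorphic and therefore $f_*$ carries $T_Z^{1,0}Gr(q,E)$ into $T_{f(Z)}^{1,0}Gr(q',E')$. In frame language: along $Gr(q,E)$ the pulled-back forms $f^*\Phi_a^{~b}$ and the "conormal" $(1,0)$-forms cutting out $Gr(q',E')$ all vanish, which forces $f_*$ of a $(1,0)$-vector tangent to $Gr(q,E)$ to be a $(1,0)$-vector of the target. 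Taking the span over all $E\in\Lambda_Z^\ell$ and invoking Lemma~\ref{not-flat} gives $f_*(T_Z^{1,0}S_{q,p}^\ell)\subset T_{f(Z)}^{1,0}S_{q',p'}^{\ell'}$; applying complex conjugation and using that $f_*$ commutes with the conjugation (because $f$ is a real map) yields the corresponding inclusion for $T_Z^{0,1}$, and adding the two gives the assertion.

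The only genuinely delicate point is making precise that "a CR map restricted to a complex submanifold of the source is holomorphic into any complex submanifold of the target containing the image" — i.e., that the tangent map $f_*$, which a priori is only defined on the real tangent space and need not send $T^{1,0}$ to $T^{1,0}$ of the ambient orbit, does send $T^{1,0}Gr(q,E)$ to $T^{1,0}$ of the target. This is where \eqref{send} is used essentially: because the image of the complex submanifold $Gr(q,E)$ lands inside the complex submanifold $Gr(q',E')$, the restriction is a map between complex manifolds on which the CR condition upgrades to honest holomorphicity, so no positivity or dimension hypotheses are needed. I expect this step to require a short lemma (or a reference to the standard fact that CR maps preserve complex submanifolds and are holomorphic along them, already invoked just before the statement) rather than any computation; everything after it is bookkeeping with the moving frame and Lemma~\ref{not-flat}.
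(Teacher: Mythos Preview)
There is a genuine gap. You write that ``the claim for $\mathbb{C}T_ZS_{q,p}^\ell = T_Z^{1,0}\oplus T_Z^{0,1}$ follows by conjugating'', but this decomposition is false: $S_{q,p}^\ell$ has positive CR codimension (the contact forms $\phi_\a^{~\b}$ span a nonzero conormal space), so $T_Z^{1,0}+T_Z^{0,1}=\ker\phi|_Z$ is a \emph{proper} subspace of $\mathbb{C}T_ZS_{q,p}^\ell$. Your argument therefore only establishes
\[
f_*(T_Z^{1,0}S_{q,p}^\ell+T_Z^{0,1}S_{q,p}^\ell)\subset T_{f(Z)}^{1,0}S_{q',p'}^{\ell'}+T_{f(Z)}^{0,1}S_{q',p'}^{\ell'},
\]
which in fact is already automatic for any CR map and does not need the hypothesis \eqref{send} at all. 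The content of the lemma lies precisely in the transverse directions you have omitted.

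To close the gap you must show that the $f_*$-image of a vector dual to some $\phi_\a^{~\b}$ still lies in $T^{1,0}+T^{0,1}$ of the target, i.e.\ that $f^*\Phi_a^{~b}$ vanishes on such vectors. The paper does this with a Levi-form computation: because the Levi form of the source is onto, the transverse directions are generated by brackets $[t_j^{~\a},\overline{t_k^{~\b}}]$ of CR and anti-CR vector fields, and one must check $\Phi_a^{~b}\big([f_*t_j^{~\a},\overline{f_*t_k^{~\b}}]\big)=0$. This reduces to evaluating $d\Phi_a^{~b}$ on $(f_*t,\overline{f_*t'})$. Here the stronger conclusion of your first step---that $f_*(T_Z^{1,0}S_{q,p}^\ell)\subset T^{1,0}_{f(Z)}Gr(q',E')$ for the \emph{single fixed} $E'$, not merely $\subset T^{1,0}_{f(Z)}S_{q',p'}^{\ell'}$---is essential, because the structure equation
\[
d\Phi_a^{~b}=\sum_{J\le \ell'-q'}(\3\Theta_a^{~J}\wedge\Theta_J^{~b}+\Theta_a^{~J}\wedge\3\Theta_J^{~b})+\sum_{\ell'-q'<J\le m'-q'}\Theta_a^{~J}\wedge\Theta_J^{~b}
\]
only collapses to zero once you know that the defining forms $\3\Theta_a^{~J},\Theta_a^{~K}$ of $Gr(q',E')$ annihilate $f_*t$ and their conjugates annihilate $\overline{f_*t'}$. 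So your use of Lemma~\ref{not-flat} is correct in spirit, but you need to record the sharper inclusion into $T^{1,0}Gr(q',E')$, and then add this bracket/structure-equation argument to handle the missing transverse piece.
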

\begin{proof}
Assume that $Gr(q',E')$ is an integral manifold of
$$\Phi_a^{~b}=\3\Theta_a^{~J}=\Theta_a^{~K}=0,\quad 1\leq a,b\leq q',~ 1\leq J\leq\ell'-q'<K\leq p'-\ell'$$
where
$$\3\Theta_a^{~J}:=\Theta_a^{~J}-\Theta_a^{~p'-q'-J+1}=0.$$
Let $\mu$ be a one form of $S_{q',p'}^{\ell'}$ in the ideal generated by $\{\Phi_a^{~b},\3\Theta_a^{~J},\Theta_a^{~K},~1\leq a,b\leq q',~ 1\leq J\leq\ell'-q'<K\leq p'-\ell'\}$.
Then \eqref{send} implies
$$\mu(f_*(t))=0,\quad\forall t\in \mathcal{C}_Z.$$
This, together with Lemma~\ref{not-flat}, implies
$$\mu(f_*(t))=0,\quad\forall t\in T^{1,0}_Z S^{\ell}_{q,p},$$
i.e.
\begin{equation}\label{subGr}
f_*(T_Z^{1,0}S^{\ell}_{q,p})\subset T^{1,0}_{f(Z)} Gr(q',E').
\end{equation}
By taking complex conjugation, we obtain
\begin{equation}\label{subGr'}
f_*(T_Z^{0,1}S^{\ell}_{q,p})\subset T^{0,1}_{f(Z)}Gr(q',E') .
\end{equation}

Next we will show that
$$f_*(v)\in T^{1,0}_{f(Z)}S^{\ell'}_{q',p'}+T^{0,1}_{f(Z)}S^{\ell'}_{q',p'}$$
for all $v\in \mathbb{C}T_ZS_{q,p}^\ell$.
Since
$$d\phi_\a^{~\b}=\theta_\a^{~j}\wedge\theta_j^{~\b}~\mod\phi,$$
it is enough to show that
$$\Phi_a^{~b}([f_*(t_j^{~\a}), \overline {f_*( t_j^{~\b})}])=0
$$
at $f(Z)$,
where $\{t_j^{~\a}\}$ are $(1,0)$ vector fields dual to $\{\theta_\a^{~j}\}$.
Since
$$\Phi_a^{~b}\left(\left[f_*(t_j^{~\a}),\overline{f_*(t_j^{~\b})}\right]\right)=d\Phi_a^{~b}\left(f_*(t_j^{~\a}),\overline{f_*(t_j^{~\b})}\right)$$
and
$$
d\Phi_a^{~b}=\sum_{1\leq J\leq \ell'-q'}\left(\3\Theta_a^{~J}\wedge\Theta_J^{~b}+\Theta_a^{~J}\wedge\3\Theta_J^{~b}\right)
+\sum_{\ell'-q'<J\leq m'-q'}\Theta_a^{~J}\wedge\Theta_J^{~b},$$
\eqref{subGr} and \eqref{subGr'} will imply
$$\Phi_a^{~b}([f_*(t_j^{~\a}), \overline {f_*( t_j^{~\b})}])=0$$
at $f(Z)$.
Hence the conclusion follows.
\end{proof}

\begin{definition}\label{cr-trans}
A germ of a CR map $f:(S^{\ell}_{q,p},Z)\to (S^{\ell'}_{q',p'},Z')$ is said to be \emph{transversal} if
$$ f_*(t)\not \in T_{Z'}^{1,0}S_{q',p'}^{\ell'}+T_{Z'}^{0,1}S_{q',p'}^{\ell'},\quad \forall t\not \in T_Z^{1,0}S_{q,p}^\ell+T_Z^{0,1}S_{q,p}^\ell.$$
\end{definition}

\section{$S^{\ell'}_{q',p'}$-frames adapted to $f$}
From now on, we shall follow the index convention that
small Greek indices $\a,\b,\g$ run over $\{1,\ldots,q\}$,
small Latin indices $i,j,k$ over $\{1,\ldots,\ell-q,(p'-q')-(m-q)+1,\ldots,p'-q'\}$,
small Latin indices $a,b,c,d$ over $\{1,\ldots, q'\}$
and large Latin indices $I, J, K$ over $\{1,\ldots,p'-q'\}$. For simplicity, we shall abuse the notation by writing
$\Sigma$ instead of $f^{*}\Sigma$ for any form $\Sigma$ on $S^{\ell'}_{q',p'}$.
We also use the notation
$\phi_{a}^{~b}=0$ if $a>q$ or $b>q$,
$\theta_{a}^{~J}=0$ if $a>q$ or $\ell-q<J\leq (p'-q')-(m-q)$ and
$\{\theta_\a^{~j}, ~1\leq \a\leq q,~j\leq \ell-q\text{ or }j>(p'-q')-(m-q)\}$ is a basis of $(1,0)$ forms satisfying
$$d\phi_\a^{~\b}=-\sum_{j\leq \ell-q}\theta_\a^{~j}\wedge\1{\theta_\b^{~j}}+\sum_{j>(p'-q')-(m-q)}\theta_\a^{~j}\wedge\1{\theta_\b^{~j}}~\mod \phi.$$

In this section we use the structure equation
for $\phi_\a^{~\b}$ modulo the ideal $\phi$ generated by the contact forms $\phi_{\a}^{~\b}$, i.e. the equations
\begin{equation}
d\phi_{a}^{~b} = \theta_{a}^{~J}\wedge \theta_{J}^{~b} \mod \phi, \quad
d\Phi_{a}^{~b} = \Theta_{a}^{~J}\wedge \Theta_{J}^{~b} \mod \phi. \Label{seq}
\end{equation}
Note that due to our convention, both sides of the first equation are zero
if $a>q$ or $b>q$ and for the same reason the summation is only performed over $J\in \{1,\ldots,\ell-q,(p'-q')-(m-q)+1,\ldots,p'-q'\}$.

\subsection{Determination of $\Phi_{1}^{~1}$}
For a CR map $f:S^{\ell}_{q,p}\to S^{\ell'}_{q',p'}$,
the pullback of $\Phi_{a}^{~b}$ via $f$ is a linear combination of $\phi_{\a}^{~\b}$ and the pull back of $\Theta_a^{~J}$via $f$
is a linear combination of $\theta_\a^{~j}$ modulo $\phi.$
Consider the diagonal terms $\Phi_a^{~a}$, $a=1,\ldots,q'$.
Suppose that (the pullbacks of) $\Phi_a^{~a}$
vanish identically for all $a$.
Consider the structure equation
\begin{equation*}\label{levi-id}
0=d\Phi_1^{~1}=-\sum_{J\leq \ell'-q'}\Theta_1^{~J}\wedge\1{\Theta_1^{~J}}+\sum_{J> \ell'-q'}\Theta_1^{~J}\wedge\1{\Theta_1^{~J}}\quad\mod \phi.
\end{equation*}
By substituting
$$\Theta_1^{~J}=\sum_{\a, j}h^{~J}_{\a, j}\theta_\a^{~j}~\mod \phi,$$
we obtain
\begin{equation}\label{h-sp}
-\sum_{J\leq \ell'-q'}h^{~J}_{\a,j}\overline{h^{~J}_{\b,k}}+\sum_{J>\ell'-q'}h^{~J}_{\a,j}\overline{h^{~J}_{\b,k}}=0,~\forall \a, \b, j, k.
\end{equation}
For each $\a$ and $j$, define a vector $h_{\a,j}$ by
$$h_{\a,j}:=(h^{~J}_{\a,j})_{J=1,\ldots,p'-q'}.$$
Then \eqref{h-sp} implies
that the space span$\{h_{\a,j}\}_{\a,j}$ is a null space of the inner product $\langle~,\rangle'$
on $\mathbb{C}^{p'-q'}$
defined by
$$\langle u,v\rangle':=-\sum_{J\leq \ell'-q'}u^J\overline{v^J}+\sum_{J> \ell'-q'}u^J\overline{v^J}.$$
After a suitable frame change by rotation, we may assume that on an open set,
$$\text{\rm span}\{h_{\a,j}\}_{\a,j}=\text{\rm span}\{\3 X_J',~1\leq J\leq n_1-q'\}$$
for some $n_1\leq \ell',$ where
$$\3 X_J':=X_J'+X_{p'-q'-J+1}',$$
i.e. $f(S^{\ell}_{q,p})$ is an integral manifold of the system
\begin{equation}\label{integral}
\3\Theta_1^{~J}=0~\mod\phi, ~1\leq J\leq n_1-q'.
\end{equation}

Fix a point $Z\in S^{\ell}_{q,p}$. For a given $E\in \Lambda_Z^\ell$, choose a minimal $Gr(q',E')\subset S^{\ell'}_{q',p'}$ through $f(Z)$ such that
$$f(Gr(q,E))\subset  Gr(q, E').$$
Then \eqref{integral} implies that $Gr(q',E')$ is an integral manifold of
$$\3\Theta_1^{~J}=0,~1\leq J\leq n_1-q'.$$
Hence we obtain
$$E'\subset f(Z)+\text{\rm span}\{\3 X'_J,X_K',~1\leq J\leq n_1-q'<K\leq p'-n_1\}.$$
This implies that $Gr(q',E')$ is an integral manifold of
\begin{equation}\label{Theta-a}
\3\Theta_a^{~J}=0,\quad 1\leq a\leq q',~1\leq J\leq n_1-q'.
\end{equation}
Since $E\in \Lambda_{Z}^\ell$ is arbitrary, together with Lemma~\ref{not-flat}, we can conclude that
\eqref{Theta-a} holds
on $T_Z^{1,0} S^{\ell}_{q,p}$.

Now consider
$$
0=d\Phi_2^{~2}=-\sum_{J\leq \ell'-q'}\Theta_2^{~J}\wedge\1{\Theta_2^{~J}}+\sum_{J> \ell'-q'}\Theta_2^{~J}\wedge\1{\Theta_2^{~J}}
\quad\mod \phi.
$$
By \eqref{Theta-a},
we obtain
$$
0=-\sum_{n_1-q'<J\leq \ell'-q'}\Theta_2^{~J}\wedge\1{\Theta_2^{~J}}+\sum_{ \ell'-q'<J\leq p'-n_1}\Theta_2^{~J}\wedge\1{\Theta_2^{~J}}
\quad\mod \phi.
$$
Then similar to the case of $\Theta_1^{~J}$, we may assume that
$$\3\Theta_2^{~J}=\Theta_2^{~K}=0~\mod\phi, ~1\leq J\leq n_2-q'<K\leq p'-n_2$$
for some $n_2\geq n_1$ and
\begin{equation*}
\3\Theta_a^{~J}=0,\quad 1\leq a\leq q',~1\leq J\leq n_2-q'
\end{equation*}
on $T_Z^{1,0} S^{\ell}_{q,p}$.
By continuing this process, we can show that there exists an integer $n_{q'}$ such that
$$\3\Theta_a^{~J}=\Theta_a^{~K}=0,~1\leq a\leq q',~1\leq J\leq n_{q'}-q'<K\leq p'-n_{q'}$$
on $T_Z^{1,0} S^{\ell}_{q,p}$.

Define
$$ F':=f(Z)+\text{\rm span}\{\3 X_J',~1\leq J\leq n_{q'}-q'\}.$$
Then we have
$$f_*(T_{Z}^{1,0}Gr(q,E))\subset T_{f(Z)}^{1,0} Gr(q,  F'),~\forall E\in \Lambda_{Z}^\ell.$$
By Lemma~\ref{open2}, we obtain
$$f_*(\mathbb{C}T_{Z}S^{\ell}_{q,p})\subset T_{f(Z)}^{1,0}S^{\ell'}_{q',p'}+T_{f(Z)}^{0,1}S^{\ell'}_{q',p'},$$
which contradicts our assumption on the transversality of $f$.
\medskip

Hence there exists at least one diagonal
term of $\Phi$ whose pullback does not vanish identically.
Choose such a diagonal term of $\Phi$, say $\Phi_1^{~1}$.
Then on an open set, $\Phi_1^{~1}\neq 0.$
Since the pullback of $\Phi_{1}^{~1}$ to $S^{\ell}_{q,p}$ is a contact form, we can write
$$\Phi_1^{~1}= c_\alpha^{~\beta}\phi^{~\alpha}_{\beta}$$
for some smooth functions $c_\alpha^{~\beta}$. Since
$(\phi_\alpha^{~\beta})$ and $(\Phi_a^{~b})$ are antihermitian, the
matrix $(c_\alpha^{~\beta})$ is hermitian. Then there exists
a change of frame on $S^{\ell}_{q,p}$ (change of position in Definition~\ref{changes}) given by
$$\widetilde Z_\alpha=U_\alpha^{~\beta}Z_\beta,
~\widetilde Y_\alpha=U_\alpha^{~\beta}Y_\beta,~\widetilde X_j=X_j$$ for some unitary matrix $U$
such that $c_{\a}^{~\b}$ is diagonalized and hence
the new contact forms $\phi_\alpha^{~\beta}$, $\alpha,\beta=1,\ldots,q$, satisfy
\begin{equation*}
\Phi_1^{~1}=\sum_{\alpha=1}^rc_\alpha \phi_\alpha^{~\alpha},
\quad 1\le r\le q,
\end{equation*}
where $c_\alpha$, $\alpha=1,\ldots,r$, are nonzero real valued smooth functions.
After dilation of $\Phi_1^{~1}$, we may further assume that $$c_1=\pm 1.$$

Denote by $\theta_\alpha$
the ideal generated by
$\{\theta_\alpha^{~j},~\forall j\}$.
We will prove the following.
\begin{lemma}\Label{r}
Assuming $(\ell'-q')<2(\ell-q)$, we have  $r=1$ and after suitable frame changes, we obtain that either
$\ell-q\leq \ell'-q'$, $m-q\leq m'-q'$ and $f$ satisfies
\begin{align}
\Phi_{1}^{~1}&=\phi_{1}^{~1},\Label{phi11-norm} \\
\Theta_1^{~J}-\theta_{1}^{~J}&=0 \mod \phi,\quad 1\leq J\leq \ell-q\text{  or  }2(\ell'-q')-(\ell-q)<J\leq p'-q' \Label{theta-norm}\\
\Theta_1^{~J}&=0 \mod \phi,\{\theta_\a,\a\geq 2\},\quad \ell-q<J\leq 2(\ell'-q')-(\ell-q)\label{theta1},\\
\3\Theta_1^{~J}&=0\mod\phi,\quad \ell-q<J\leq \ell'-q'\label{theta1-hat}
\end{align}
or $\ell-q\leq m'-q'$, $m-q\leq \ell'-q'$ and $f$ satisfies
\begin{align}
\Phi_{1}^{~1}&=-\phi_{1}^{~1},\Label{phi11-norm'} \\
\Theta_1^{~J}-\theta_{1}^{~p'-q'-J+1}&=0 \mod \phi,\quad 1\leq J\leq m-q\text{  or  }2(\ell'-q')-(m-q)<J\leq p'-q' \Label{theta-norm'}\\
\Theta_1^{~J}&=0 \mod \phi,\{\theta_\a,\a\geq 2\},\quad m-q<J\leq 2(\ell'-q')-(m-q)\label{theta1'}\\
\3\Theta_1^{~J}&=0\mod\phi,\quad m-q<J\leq \ell'-q',\label{theta1-hat'}
\end{align}
where
$$\3\Theta_1^{~J}:=\Theta_1^{~J}-\Theta_1^{~2(\ell'-q')-J+1}.$$
In addition, we can choose a frame such that
for each $a\geq 2,$ there exist smooth functions $\mu_{a}^{~\b}$, $\b\geq 2$ satisfying
either
\begin{equation}
\Theta_a^{~j}=\sum_{\b\geq 2}\mu_{a}^{~\b}\theta_\b^{~j} \mod \phi, \quad j\leq \ell-q\text{  or  }j>(p'-q')-(m-q)\label{theta-a-j},
\end{equation}
if $\Phi_1^{~1}=\phi_1^{~1}$
or
\begin{equation}
\Theta_a^{~p'-q'-j+1}=\sum_{\b\geq 2}\mu_{a}^{~\b}\theta_\b^{~j} \mod \phi, \quad j\leq \ell-q\text{  or  }j>(p'-q')-(m-q)\label{theta-a-j'},
\end{equation}
if $\Phi_1^{~1}=-\phi_1^{~1}$.

\end{lemma}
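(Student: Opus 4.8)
The plan is to exploit the structure equation \eqref{seq} for $\Phi_1^{~1}$ together with the normalization $\Phi_1^{~1}=\sum_{\a=1}^r c_\a\phi_\a^{~\a}$, $c_1=\pm 1$, and to extract from it a system of quadratic (Hermitian) relations on the coefficients $h$ of the expansion $\Theta_1^{~J}=\sum h^{~J}_{\a,j}\theta_\a^{~j}\bmod\phi$. Concretely, I would substitute this expansion into $d\Phi_1^{~1}=\Theta_1^{~J}\wedge\Theta_J^{~b}\bmod\phi$ (using the signature form $\langle\cdot,\cdot\rangle'$ on $\mathbb C^{p'-q'}$) and compare with $d(\sum c_\a\phi_\a^{~\a})=\sum c_\a(-\sum_{j\leq\ell-q}\theta_\a^{~j}\wedge\1{\theta_\a^{~j}}+\sum_{j>(p'-q')-(m-q)}\theta_\a^{~j}\wedge\1{\theta_\a^{~j}})\bmod\phi$. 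Matching the coefficients of $\theta_\a^{~j}\wedge\1{\theta_\b^{~k}}$ yields, for each fixed $(j,k)$, that the Gram matrix of the vectors $h_{\a,j}=(h^{~J}_{\a,j})_J$ with respect to $\langle\cdot,\cdot\rangle'$ equals $\mathrm{diag}(\pm c_1,\dots,\pm c_r,0,\dots,0)$ up to the $\epsilon_j:=-1$ (resp.\ $+1$) sign coming from whether $j\leq\ell-q$ or $j>(p'-q')-(m-q)$. In particular, the vectors $\{h_{\a,j}\}$ with $\a\leq r$ span a nondegenerate subspace $V\subset\mathbb C^{p'-q'}$ of $\langle\cdot,\cdot\rangle'$-signature controlled by the $c_\a$ and the $\epsilon_j$, and hence $\dim V$ is bounded in terms of $\ell'-q'$ and $m'-q'$.

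The core of the argument is then a dimension count. Since each $\epsilon_j$ takes both signs as $j$ ranges (there are $\ell-q$ indices with $\epsilon_j=-1$ and $m-q$ with $\epsilon_j=+1$), the subspace $V$ must contain, roughly, an $r(\ell-q)$-dimensional positive-like part \emph{and} an $r(m-q)$-dimensional negative-like part relative to suitable bases; more precisely one gets that $V$ contains $r$ mutually "$\langle\cdot,\cdot\rangle'$-orthogonal" copies of a space carrying signature $(\ell-q,m-q)$ (or $(m-q,\ell-q)$), forcing $r(\ell-q)\leq\ell'-q'$ and $r(m-q)\leq m'-q'$ in one case, or $r(\ell-q)\leq m'-q'$ and $r(m-q)\leq\ell'-q'$ in the other. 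Combined with the standing hypothesis $\ell'-q'<2(\ell-q)$, the first inequality $r(\ell-q)\leq\ell'-q'<2(\ell-q)$ forces $r=1$. (One has to check that the cross terms between distinct $\a$'s indeed vanish so that the $r$ blocks are genuinely orthogonal; this is exactly what the off-diagonal equations $0=\langle h_{\a,j},h_{\b,k}\rangle'$ for $\a\neq\b$ give, since $\Phi_1^{~1}$ has no $\phi_\a^{~\b}$ term with $\a\neq\b$.) Having $r=1$, the single nonzero block has signature exactly $(\ell-q,m-q)$ or $(m-q,\ell-q)$ according to $c_1=\pm1$, which pins down $\ell-q\leq\ell'-q'$, $m-q\leq m'-q'$ (resp.\ the swapped inequalities), and by a rotation in the $X'$-variables we may align $V$ with the standard coordinate subspace; this is precisely the content of \eqref{phi11-norm}--\eqref{theta1-hat} (resp.\ \eqref{phi11-norm'}--\eqref{theta1-hat'}), including the symmetry-forced relations on the "hatted" components $\3\Theta_1^{~J}$ which come from the skew-Hermitian symmetry of $\pi$ restricted to the middle block.

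For the last assertion \eqref{theta-a-j}/\eqref{theta-a-j'}, I would revisit the diagonal equations $0=\langle h_{\a,j},h_{\a,k}\rangle'$ and off-diagonal equations $0=\langle h_{1,j},h_{\a,k}\rangle'$ for $\a\geq 2$: since with $r=1$ only $\Phi_1^{~1}$ survives as a nonvanishing diagonal form but the remaining $\Phi_a^{~b}$ still satisfy their own structure equations, the previously-proven vanishing (from the $\Phi_a^{~a}\equiv 0$ analysis carried out just before the lemma) shows that for $a\geq 2$ the form $\Theta_a^{~J}$ lies, modulo $\phi$, in the span of $\{\theta_\b^{~j}:\b\geq 2\}$ when $j\leq\ell-q$ or $j>(p'-q')-(m-q)$ — giving the coefficient matrix $\mu_a^{~\b}$; the placement at index $p'-q'-j+1$ in the $c_1=-1$ case is again dictated by the rotation that put $V$ in standard position. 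The main obstacle I anticipate is bookkeeping: correctly tracking the three index ranges (the $\ell-q$ "negative" directions, the $m-q$ "positive" directions, and the shifted block $\ell-q<J\leq 2(\ell'-q')-(\ell-q)$ that the $\3\Theta$ symmetry introduces), and verifying that the quadratic system forces not merely $\dim V\leq\ell'-q'+m'-q'$ but the sharper \emph{signature-separated} bounds $r(\ell-q)\leq\ell'-q'$ and $r(m-q)\leq m'-q'$ — this requires choosing, for a single well-chosen pair $(j,j')$ with $\epsilon_j=-1$, $\epsilon_{j'}=+1$, a basis in which the Gram relations become block-diagonal, and here the hypothesis $\ell'-q'<2(\ell-q)$ is used not just at the end but to rule out degenerate intermediate configurations.
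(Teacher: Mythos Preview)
Your plan for the first half — expanding $\Theta_1^{~J}$ in the $\theta_\alpha^{~j}$, reading off the Gram relations $\langle h_{\alpha,j},h_{\beta,k}\rangle'=c_\alpha\delta_{\alpha\beta}\3\delta_{jk}$, and using a signature count to force $r=1$ — is essentially the paper's argument. One small caveat: you assume all $r$ blocks carry the same signature, but the $c_\alpha$ could a priori have different signs; the clean fix (which the paper does implicitly) is to note that regardless of $\mathrm{sgn}(c_\alpha)$ each nondegenerate block contributes at least $\min(\ell-q,m-q)=\ell-q$ negative directions, so $r(\ell-q)\leq\ell'-q'$ still follows. Also, \eqref{theta1-hat} does not come from the skew-Hermitian symmetry of $\pi$ as you say; it comes from the fact that once $r=1$ the vectors $h_{\alpha,j}$ with $\alpha\geq 2$ span a \emph{null} subspace of $\langle\cdot,\cdot\rangle'$ in the orthogonal complement of $\mathrm{span}\{h_{1,j}\}$, and any such null space can be rotated into $\mathrm{span}\{X_J'+X'_{2(\ell'-q')-J+1}\}$.

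Your plan for \eqref{theta-a-j}/\eqref{theta-a-j'} has a real gap. The equations $\langle h_{\alpha,j},h_{\alpha,k}\rangle'=0$ and $\langle h_{1,j},h_{\alpha,k}\rangle'=0$ that you propose to ``revisit'' concern only the coefficients of $\Theta_1^{~J}$; they say nothing about $\Theta_a^{~J}$ for $a\geq 2$. And the ``previously-proven vanishing from the $\Phi_a^{~a}\equiv 0$ analysis'' before the lemma is a contradiction argument against transversality, not a source of relations on $\Theta_a^{~j}$. The paper instead differentiates the \emph{off-diagonal} contact form $\Phi_a^{~1}$ for $a\geq 2$: writing $\Phi_a^{~1}=\lambda_\beta^{~\gamma}\phi_\gamma^{~\beta}$ and using the structure equation together with the already-normalized form of $\Theta_1^{~j}$ (hence of $\Theta_j^{~1}=-\overline{\Theta_1^{~j}}$) yields
\[
\sum_j \Theta_a^{~j}\wedge\theta_j^{~1}=\lambda_1^{~\gamma}\,\theta_\gamma^{~j}\wedge\theta_j^{~1}\ \bmod\ \phi,\{\overline{\theta}_\beta:\beta\geq 2\}.
\]
A change of position $\Theta_a^{~J}\mapsto\Theta_a^{~J}-\lambda_1^{~1}\Theta_1^{~J}$ kills the $\gamma=1$ term, and then linear independence of the $\theta_j^{~1}$ forces $\Theta_a^{~j}=\sum_{\gamma\geq 2}\lambda_1^{~\gamma}\theta_\gamma^{~j}\bmod\phi$, which is exactly \eqref{theta-a-j} with $\mu_a^{~\gamma}:=\lambda_1^{~\gamma}$. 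You need this $\Phi_a^{~1}$ step; nothing in your outline supplies it.
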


\bpf
The structure equations \eqref{seq} for $\Phi_1^{~1}$ yield
\begin{equation}\Label{phi11}
\sum_{J=1}^{\ell'-q'} \Theta_{1}^{~J}\wedge \1{\Theta_{1}^{~J}}-\sum_{J=\ell'-q'+1}^{p'-q'} \Theta_{1}^{~J}\wedge \1{{\Theta}_{1}^{~J}}
= \sum_{\a} c_{\a}\left(\sum_{j=1}^{\ell-q} \theta_{\a}^{~j}\wedge \1{\theta_{\a}^{~j}} -\sum_{j=\ell-q+1} ^{p-q} \theta_{\a}^{~j}\wedge \1{{\theta}_{\a}^{~j}}\right)\mod \phi.
\end{equation}
Let
\begin{equation}\Label{h-eq}
\Theta_1^{~J}=\sum_{\a,j}h^{~J}_{\alpha,j}\theta_\alpha^{~j}\text{   mod   }~\phi
\end{equation}
and define
$$h_{\a,j}:=(h_{\a,j}^{~J})_{J=1,\ldots,p'-q'},\quad \forall \a,j.$$
Then \eqref{phi11} implies
\begin{equation}\label{orth}
\langle h_{\a,j},h_{\b,k}\rangle'
=c_\alpha\delta_{\alpha\beta}\cdot\3\delta_{jk},\quad \forall \a,\b, j,k,
\end{equation}
where
$$c_{\a}:=0, \quad\a>r.$$
Thus the vectors $h_{\a,j}$
are pairwise orthogonal and have length $c_{\a}$ independent of $j$ with respect to $\langle~,\rangle'$.
After a suitable
rotation,
we may assume that either
$$\text{\rm span}\{h_{1,j}\}_j=\text{\rm span}\{X_j'\}_j$$
and therefore $\ell-q\leq \ell'-q'$ and $m-q\leq m'-q'$ if $c_1=1$ or
$$\text{\rm span}\{h_{1,j}\}_j=\text{\rm span}\{X'_{p'-q'-j+1}\}_j$$
and therefore
$\ell-q\leq m'-q'$ and $m-q\leq \ell'-q'$
if $c_1=-1$. This implies either
$$\Theta_1^{~J}-\theta_{1}^{~J}=0 \mod \phi,\{\theta_\a,\a\geq 2\}$$
if $c_1=1$ or
$$\Theta_1^{~J}-\theta_{1}^{~p'-q'-J+1}=0 \mod \phi,\{\theta_\a,\a\geq 2\}$$
if $c_1=-1$.
This together with \eqref{orth} implies either
$$\Theta_1^{~j}-\theta_{1}^{~j}=0 \mod \phi$$
if $c_1=1$ or
$$\Theta_1^{~j}-\theta_{1}^{~p'-q'-j+1}=0 \mod \phi$$
if $c_1=-1$.

 Now fix $\a>1$. Then the vector space span$\{h_{\a,j}\}_j$ is in the orthogonal complement of $\text{span}\{h_{1,j}\}_j$ with respect to $\langle~,\rangle'$. Note that $\langle~,\rangle'$ restricted to an orthogonal space of $\text{span}\{h_{1,j}\}_j$ has $(m'-q')-(m-q)$ positive eigenvalues and $(\ell'-q')-(\ell-q)$ negative eigenvalues. Therefore, the maximal null space spanned by $\{h_{\a,j}\}_j$ is at most $(\ell'-q')-(\ell-q)$ dimensional. Since $\ell'-q'<2(\ell-q)$ by our assumption, this is only possible when
$c_\a=0$ for all $\a\ne1$, i.e. $r=1$ and
$$\langle h_{\a,j},h_{\b,k}\rangle'
=0,~\a,\b\geq 2.$$
In particular, either \eqref{phi11-norm} or \eqref{phi11-norm'} holds and span$\{h_{\a,j}\}_{\a\geq 2,j}$ is a null space of $\langle~,\rangle'$.
Then after a frame change by rotation, we may assume that either
$$\text{\rm span}\{h_{\a,j}\}_{\a\geq 2,j}=\text{\rm span}\{ X_J'+X'_{2(\ell'-q')-J+1},~\ell-q<J\leq \ell'-q'\}$$
if $c_1=1$
or
$$\text{\rm span}\{h_{\a,j}\}_{\a\geq 2,j}=\text{\rm span}\{X_J'+X'_{2(\ell'-q')-J+1},~m-q<J\leq \ell'-q'\}$$
if $c_1=-1$ and hence either \eqref{theta-norm}, \eqref{theta1} and \eqref{theta1-hat} or \eqref{theta-norm'}, \eqref{theta1'} and \eqref{theta1-hat'} hold.

For further adaptation, fix $a\ge 2$ and let
\begin{equation}\Label{la}
\Phi_a^{~1}=\lambda_\b^{~\g}\phi_\g^{~\b}
\end{equation}
for some smooth functions $\lambda_\b^{~\g}$. First assume
$$\Phi_1^{~1}=\phi_1^{~1}.$$
Then \eqref{seq}, \eqref{theta-norm}, \eqref{theta1} and \eqref{theta1-hat}  imply
\begin{equation}\Label{th-aj}
\sum_{j}\Theta_a^{~j}\wedge \theta_j^{~1}
=\lambda_1^{~\g}\left(\theta_\g^{~j}\wedge\theta_j^{~1}\right),~\mod \phi,\{ \1{\theta}_{\b},~\b\ge2\}.
\end{equation}
There exists a change of position that leaves $\Theta_{1}^{~J}$ invariant and replaces
$\Theta_a^{~J}$
with $\Theta_a^{~J}-\lambda_1^{~1}\Theta_1^{~J}$ for given $a\ge2$.
This change of position leaves $\Phi_{1}^{~1}$ invariant
and transforms $\Phi_{a}^{~1}$ into $\Phi_{a}^{~1}-\l_{1}^{~1}\Phi_{1}^{~1}$ for given $a\ge2$.
After performing such change of position, \eqref{th-aj} becomes
\begin{equation*}
\sum_{j}\Theta_a^{~j}\wedge \theta_j^{~1}=\sum_{\g\geq 2}\lambda_1^{~\g}\left(\theta_\g^{~j}\wedge\theta_j^{~1}\right),~\mod \phi,\{ \1{\theta}_{\b},~\b\ge2\}.
\end{equation*}
Since $\Theta_a^{~J}$ are $(1,0)$ forms but $\theta_j^{~1}$ are $(0,1)$ forms and linearly independent,
it follows that for each fixed $a\ge2$, we have
\begin{equation*}\Label{theta-aj}
\Theta_a^{~j}=\sum_{\g\geq 2}\lambda_1^{~\g}\theta_\g^{~j} \mod \phi,
\end{equation*}
i.e. \eqref{theta-a-j} holds, where we let $\mu_a^{~\g}=\l_1^{~\g}.$ Similar argument for $\Phi_1^{~1}=-\phi_1^{~1}$ will show that \eqref{theta-a-j'} holds, which completes the proof.
\epf

By \eqref{theta1-hat} or \eqref{theta1-hat'} we may assume that $f$ satisfies either
\begin{equation}\label{indep1}
\Theta_1^{~\ell-q+1}\wedge\cdots\wedge\Theta_1^{~\ell-q+n_1}\neq 0,
~\Theta_1^{~\ell-q+n_1+1}=\cdots=\Theta_1^{~2(\ell'-q')-n_1-1}=0
\end{equation}
for some $n_1\leq \ell'-q'-(\ell-q)$ if $\Phi_1^{~1}=\phi_1^{~1}$
or
$$\Theta_1^{~m-q+1}\wedge\cdots\wedge\Theta_1^{~m-q+n_1}\neq 0,
~~\Theta_1^{~m-q+n_1+1}=\cdots=\Theta_1^{~2(\ell'-q')-n_1-1}=0
$$
for some $n_1\leq \ell'-q'-(m-q)$ if $\Phi_1^{~1}=-\phi_1^{~1}$.

\subsection{Determination of $\Phi_{2}^{~2}$ and $\Phi_{2}^{~1}$}
Next for each fixed $a\ge2$, let
\begin{equation}\Label{lab}
\Phi_a^{~a}=\lambda_{\beta}^{~\g}\phi_\g^{~\beta}
\end{equation}
Using the structure equation for $
\Phi_a^{~a}$
together with \eqref{theta-a-j} or \eqref{theta-a-j'} in Lemma~\ref{r}, we obtain
either
\begin{equation}\Label{2a}
\sum_{J=\ell-q+1}^{2(\ell'-q')-(\ell-q)} \Theta_a^{~J}\wedge \Theta^{~a}_J
=\lambda_{\beta}^{~1}\left(\theta_1^{~j}\wedge\theta_j^{~\b}\right)
\mod \{\theta_\g: \g\ge2\},\phi
\end{equation}
or
\begin{equation}\Label{2a'}
\sum_{J=m-q+1}^{2(\ell'-q')-(m-q)} \Theta_a^{~J}\wedge \Theta^{~a}_J
=\lambda_{\beta}^{~1}\left(\theta_1^{~j}\wedge\theta_j^{~\b}\right)
\mod \{\theta_\g: \g\ge2\},\phi.
\end{equation}
Suppose $\l_{\b}^{~1}\ne0$ for some $\b$.
Since $\ell-q\leq m-q$, on the left-hand side of \eqref{2a} or \eqref{2a'} we have at most $\ell'-q'-(\ell-q)$ dimensional null spaces,
whereas on the right-hand side we have $(\ell-q)$ dimensional null spaces.
Since $\ell'-q'<2(\ell-q)$, this is impossible.
Hence we have $\l_{\b}^{~1}=0$ for all $\b$ and therefore \eqref{lab} becomes
$$\Phi_a^{~a}=0~\text{   mod
}\{\phi_\g^{~\beta} : \g\ge 2\}, \quad a\ge2.$$
Since $\Phi_{a}^{~b}$ and $\phi_{\a}^{~\b}$ are antihermitian, we also have
$$\Phi_a^{~a}=0~\text{   mod
}\{\phi_\g^{~\beta} : \b\ge 2\}, \quad a\ge2,$$
and hence
\begin{equation}\Label{phi-aa}
\Phi_a^{~a}=0~\text{   mod
}\{\phi_\g^{~\beta}: \b,\g\ge2 \}, \quad a\ge 2.
\end{equation}
\medskip

We will repeat the argument from the beginning of this section. Assume first that
$\Phi_{a}^{~a}=0$ for all $a\ge 2$.
Similarly, we can choose $Gr(q', F')\subset S^{\ell'}_{q',p'}$ such that
$$f_*\left(T^{1,0}_ZGr(q,E)\cap \text{ kernel}\{\theta_1\}\right)\subset T^{1,0}_{f(Z)}S^{\ell'}_{q',p'},~\forall E\in \Lambda_Z^\ell$$
and hence $f_*$ restricted to $T_Z^{1,0} (S^{\ell}_{q,p}\cap \{Z_1=\text{constant}\})$ contradicts our assumption on the transversality.
Therefore we can choose a non trivial $\Phi_{a}^{~a}$ for some $a$, say $a=2$.
Then \eqref{phi-aa} implies that, after a change of position as before,
we may assume
$$\Phi_{2}^{~2}=\sum_{\a\ge 2} c_{\a} \phi_{\a}^{~\a}$$
for some real $c_{\a}$ not all zero and
\eqref{seq} yields
\begin{equation*}\label{two theta1}
\Theta_2^{~J}\wedge \Theta_J^{~2}=\sum_{\alpha\ge2} c_\alpha\theta_\alpha^{~j}\wedge\theta_j^{~\alpha}
~\text{   mod   }~\phi.
\end{equation*}
Since the proof of Lemma~\ref{r} can be repeated for $\Phi_{2}^{~2}$
instead of $\Phi_{1}^{~1}$, we conclude that
the only one $c_{\a}$, say $c_{2}$ can be different from zero. After a dilation fixing $\Phi_1^{~1}$, we may assume
$$\Phi_{2}^{~2}= c_2\phi_{2}^{~2},$$
where $c_2=\pm 1$
and
\begin{equation}\Label{theta2}
\T_{2}^{~J}\wedge\T_{J}^{~2} = c_2\theta_{2}^{~j}\wedge \theta_{j}^{~2}
\mod\phi.
\end{equation}

We claim that $c_2=c_1$. First assume that $c_1=1$, i.e. $\Phi_1^{~1}=\phi_1^{~1}$.
Since
\begin{equation*}\Label{tet2}
\Phi_{2}^{~1}=\eta_{\b}^{~\a} \phi_{\a}^{~\b}
\end{equation*}
for suitable $\eta_{\b}^{~\a}$, we obtain
\begin{equation*}
\T_{2}^{~J}\wedge \T_{J}^{~1} = \eta_{\b}^{~\a} \theta_{\a}^{~j} \wedge
\theta_{j}^{~\b} \mod \phi,
\end{equation*}
which in view of Lemma~\ref{r}, yields
$$\sum_{j}\Theta_2^{~j}\wedge\theta_j^{~1}+\sum_{\ell-q<J\leq \ell'-q'}\3 \Theta_2^{~J}\wedge\Theta_J^{~1}=\eta_{\b}^{~\a} \theta_{\a}^{~j} \wedge
\theta_{j}^{~\b} \mod \phi.
$$
By Lemma~\ref{r}, we can substitute
\begin{equation}\Label{teta2-all}
\Theta_2^{~j}=\mu^{\a}\theta_\a^{~j}\mod \phi
\end{equation}
for some $\mu^\a$ with $\mu^1=0$, which
yields
\begin{equation}\label{h-id-hat}
\sum_{\ell-q<J\leq \ell'-q'}\3 \Theta_2^{~J}\wedge\Theta_J^{~1}=\sum_{\a}(\eta_1^{~\a}-\mu^\a)\theta_\a^{~j}\wedge\theta_j^{~1}+\sum_{\a}\sum_{\b\geq 2}
\eta_{\b}^{~\a} \theta_{\a}^{~j} \wedge
\theta_{j}^{~\b} \mod \phi.
\end{equation}
Since the left-hand side of \eqref{h-id-hat} contains at most $(\ell'-q')-(\ell-q)$ linearly independent $(1,0)$ forms while the right-hand side contains at least $(p-q)$ linearly independent $(1,0)$ forms unless trivial, we obtain that $\eta^{~\a}_1=\mu^\a$ and $\eta_{\b}^{~\a}=0$ for all $\b>1,$ i.e.
$$\Phi_2^{~1}=\sum_{\a\ge 2}\mu^\a\phi_\a^{~1}$$
and
$$\sum_{\ell-q<J\leq \ell'-q'}\3 \Theta_2^{~J}\wedge\Theta_J^{~1}=0\mod\phi.$$
Then by condition \eqref{indep1} we obtain
\begin{equation}\label{teta3-hat}
\3\Theta_2^{~J}=0\mod\phi,~\ell-q<J\leq \ell-q+n_1.
\end{equation}

Substituting \eqref{teta2-all} and \eqref{teta3-hat} into \eqref{theta2}, we obtain
$$\sum_{\a\geq 2}|\mu^\a|^2\theta_\a^{~j}\wedge\theta_j^{~\a}+\sum_{\ell-q+n_1<J\leq p'-q'-(m-q)}\Theta_2^{~J}\wedge\Theta_J^{~2}=c_2\theta_2^{~j}\wedge\theta_j^{~2}\mod\phi,\{\theta_\a^{~j}\wedge\theta_j^{~\b},\a\neq \b\}.$$
Since $\sum_{\ell-q+n_1<J\leq p'-q'-(m-q)}\Theta_2^{~J}\wedge\Theta_J^{~2}$ has at most $(\ell'-q')-(\ell-q+n_1)$ dimensional null spaces while
$\theta_\a^{~j}\wedge\theta_j^{~\a}$ has at least $(\ell-q)$-dimensional null spaces, we obtain $|\mu^{2}|^2=c_2>0$, $\mu^\a=0$ for all $\a\geq 3$
and
\begin{equation}
\sum_{\ell-q+n_1<J\leq p'-q'-(m-q) }\Theta_2^{~J}\wedge\Theta_J^{~2}=0\mod\phi.\label{Theta_2-vanish}
\end{equation}
In particular, we have
$$\Phi_2^{~1}=\mu\phi_2^{~1},~\Phi_2^{~2}=\phi_2^{~2}$$
and
$$\Theta_2^{~j}=\mu\theta_2^{~j}\mod\phi$$
for some $\mu$ with $|\mu|^2=1.$
After a change of position given by $\widetilde Z_2=uZ_2$ for some $u\in \mathbb{C}$ with $|u|=1$, we may assume $\mu=1$, i.e.
we have
$$\Phi_2^{~1}=\phi_2^{~1}$$
and
$$\Theta_2^{~j}=\theta_2^{~j}\mod\phi.$$

Finally, as in the proof of Lemma~\ref{r} we substitute
$$\Theta_2^{~J}=\sum_{\a, j}h^{~J}_{\a,k}\theta_\a^{~k}\mod\phi,\quad \ell-q+n_1< J\leq p'-q'-(m-q)$$
into \eqref{Theta_2-vanish}
to obtain
$$-\sum_{J=\ell-q+n_1+1}^{\ell'-q'} h_{\a,j}^{~J}\1{h^{~J}_{\b,k}}+ \sum_{J=\ell'-q'+1}^{p'-q'-(m-q)} h_{\a,j}^{~J}\1{h^{~J}_{\b,k}}=0.$$
In view of \eqref{teta3-hat}, after a suitable rotation fixing $\Theta_a^{~J}, ~\3\Theta_a^{~J}$ for $1\leq J\leq \ell-q+n_1$ and $\Theta_a^{~J}$ for $J>2(p'-q')-(m-q)$, we may assume that
\begin{equation*}\label{theta-aJ}
\3\Theta_2^{~J}=\Theta_2^{~K}=0\mod\phi,\quad \ell-q+n_1< J\leq \ell-q+n_2,~2(\ell'-q')-(\ell-q)<K\leq p'-q'-(m-q)
\end{equation*}
with independence condition
$$\Theta_2^{~\ell-q+n_1+1}\wedge\cdots\wedge\Theta_2^{~\ell-q+n_2}\neq 0,
~\Theta_2^{~\ell-q+n_2+1}=\cdots=\Theta_2^{~2(\ell'-q')-n_2-1}=0$$
for some $n_2\geq n_1.$
The same argument can be applied for the case of $\Phi_1^{~1}=-\phi_1^{~1}$ as well to determine $\Phi_2^{~2}, \Phi_2^{~1}$ and $\Theta_2^{~J}$.

\subsection{Determination of $\Phi_{a}^{~b}$ for $a,b>2$}
Now we will repeat again the arguments,
where we replace $1$ by $2$ and $2$ by $3$ and so on.
Then by induction argument on $a$, we
obtain the following lemma.

\begin{lemma}\Label{eds}
For any CR-embedding $f\colon S^{\ell}_{q,p}\to S^{\ell'}_{q',p'}$
there is a choice of $S^{\ell}_{q,p}$-frame and $S^{\ell'}_{q',p'}$-frame
such that the pulled back forms satisfy either
\begin{align}
\Phi_a^{~b}-\phi_a^{~b}&=0,\nonumber\\
\Theta_a^{~J}-\theta_a^{~J}&=0 \mod\phi,~1\leq J\leq \ell-q~\text{or }2(\ell'-q')-(\ell-q)<J\leq p'-q',\label{pre-last}\\
\3\Theta_a^{~J}&=0\mod\phi,~\ell-q<J\leq \ell'-q'\label{pre-last-eq}
\end{align}
or
\begin{align*}
\Phi_a^{~b}+\phi_a^{~b}&=0,\nonumber\\
\Theta_a^{~J}-\theta_a^{~p'-q'-J+1}&=0 \mod\phi,~1\leq J\leq m-q\text{  or  }2(\ell'-q')-(m-q)<J\leq p'-q' ,\\
\3\Theta_a^{~J}&=0\mod\phi,~m-q<J\leq \ell'-q'
\end{align*}
for all $a,b=1,\ldots,q'$,
where
$$\3\Theta_a^{~J}:=\Theta_a^{~J}-\Theta_a^{~2(\ell'-q')-J+1}.$$
\end{lemma}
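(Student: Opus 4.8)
The plan is to obtain Lemma~\ref{eds} by iterating the argument that produced the normalizations of $\Phi_1^{~1},\Phi_2^{~2},\Phi_2^{~1}$ in the two preceding subsections, setting up a clean induction on the diagonal index $a$. More precisely, suppose inductively that after suitable frame changes we have already normalized $\Phi_b^{~c}=\pm\phi_b^{~c}$ (same sign $\epsilon=\pm1$ for all) and $\Theta_b^{~J}\equiv\theta_b^{~J}$ (resp.\ $\theta_b^{~p'-q'-J+1}$) modulo $\phi$ in the appropriate ranges for all $b,c\le a-1$, together with the independence/vanishing statements \eqref{pre-last}, \eqref{pre-last-eq} for those indices, and moreover that $\Phi_{b}^{~c}\equiv 0$ and $\Theta_b^{~J}\equiv 0$ modulo $\phi$ and $\{\theta_\g,\g\ge a\}$ whenever exactly one of $b,c$ is $\ge a$ (the analogue of \eqref{theta-a-j} and \eqref{phi-aa} with the threshold shifted). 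This is precisely the state of affairs reached after \S4.2 with $a=3$, so the base case is done.

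The inductive step repeats verbatim the three moves of \S4.1--4.2 with the roles of $1,2$ played by $a$ and $a+1$. First one runs the transversality-cum-cycle argument: if $\Phi_b^{~b}=0$ for all $b\ge a$, then by Lemma~\ref{not-flat} and Lemma~\ref{open2} applied on the slices $\{Z_1=\cdots=Z_{a-1}=\text{const}\}$ one finds that $f_*$ restricted to $T^{1,0}$ of that slice lands in $T^{1,0}+T^{0,1}$ of the target, contradicting transversality; hence some $\Phi_b^{~b}$, $b\ge a$, is nonzero and we may relabel it $\Phi_a^{~a}$. Next, the structure equation \eqref{seq} for $\Phi_a^{~a}$, combined with the induction hypothesis that $\Theta_a^{~j}=\sum_{\g\ge a}\mu_a^{~\g}\theta_\g^{~j}\bmod\phi$, reduces to an identity of the shape \eqref{theta2}; the rank/null-space comparison from the proof of Lemma~\ref{r} (this is where $\ell'-q'<2(\ell-q)$ is used) forces $\Phi_a^{~a}=\epsilon\,\phi_a^{~a}$ with the \emph{same} sign $\epsilon$ as before — the sign is pinned because $\sum_{\ell-q<J\le\ell'-q'}\3\Theta_a^{~J}\wedge\Theta_J^{~b}$ has at most $(\ell'-q')-(\ell-q)$ independent $(1,0)$-forms, which cannot absorb the $(p-q)$-dimensional expression on the other side unless the off-diagonal coefficients vanish, exactly as in the derivation of \eqref{teta3-hat} and \eqref{Theta_2-vanish}. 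Finally the off-diagonal forms $\Phi_a^{~b}$ (and $\Phi_b^{~a}$) for $b<a$ are handled through \eqref{lab}-type expressions $\Phi_a^{~b}=\lambda_\g^{~\delta}\phi_\delta^{~\g}$: the structure equation together with the already-normalized $\Theta_a^{~j}$ and $\Theta_b^{~j}$ yields $\lambda=0$ off the $(a,b)$ slot and $|\lambda_b^{~a}|^2=1$, whence a unit-modulus change of position $\widetilde Z_a=uZ_a$ makes $\Phi_a^{~b}=\epsilon\,\phi_a^{~b}$ and $\Theta_a^{~J}\equiv\theta_a^{~J}$ (resp.\ $\theta_a^{~p'-q'-J+1}$) on the nose modulo $\phi$; the residual $\Theta_a^{~J}$ in the middle range satisfies $\3\Theta_a^{~J}=0\bmod\phi$ by the independence conditions \eqref{indep1}, giving \eqref{pre-last-eq}. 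One also re-derives the shifted versions of \eqref{theta-a-j}/\eqref{phi-aa} for the new threshold $a+1$ to feed the next step. After $q'$ iterations every $\Phi_a^{~b}$ and $\Theta_a^{~J}$ is normalized, which is the assertion of Lemma~\ref{eds}.

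I expect the only real subtlety — rather than genuine obstacle — to be bookkeeping: carrying the shifted induction hypotheses (the "$\bmod\phi,\{\theta_\g:\g\ge a\}$" statements and the integers $n_1\le n_2\le\cdots$ from \eqref{indep1}) through the relabelings, and verifying that the sign $\epsilon$ is globally constant rather than possibly jumping between $+1$ and $-1$ at different stages. The latter is forced by the same mechanism that pinned $c_2=c_1$ in \S4.2: once $\Phi_1^{~1}=\epsilon\phi_1^{~1}$ is fixed, the mixed structure equation for $\Phi_a^{~1}$ (or more generally for $\Phi_a^{~b}$ with $b$ already normalized) has one side with $\le(\ell'-q')-(\ell-q)<\ell-q$ independent $(1,0)$-forms and the other with $\ge p-q\ge\ell-q$, so the two sign patterns cannot coexist. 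All the dimension counts are the ones already carried out in the proofs of Lemma~\ref{r} and \S4.2, so no new inequality beyond \eqref{main-ineq} is needed, and the induction closes. Since every move is a verbatim repetition of an argument given in detail above, the paper is justified in stating Lemma~\ref{eds} with only the remark that one "repeats again the arguments, where we replace $1$ by $2$ and $2$ by $3$ and so on".
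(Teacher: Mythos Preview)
Your proposal is correct and follows exactly the paper's approach: the paper itself gives no further argument beyond the sentence ``Now we will repeat again the arguments, where we replace $1$ by $2$ and $2$ by $3$ and so on. Then by induction argument on $a$, we obtain the following lemma.'' Your write-up simply unpacks this induction in more detail than the paper does, correctly identifying the inductive hypothesis, the three moves of \S4.1--4.2 to be iterated, and the sign-consistency mechanism from the $c_2=c_1$ argument.
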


\subsection{Determination of $\Theta_a^{~J}$,$\3\Theta_a^{~J}$}
Assume that $\Phi_a^{~b}=\phi_a^{~b}$. Let
\begin{align}
\Theta_a^{~J}-\theta_a^{~J}&=\eta_{a,\b}^{~J,\a}\phi_\a^{~\b},~1\leq J\leq \ell-q~\text{or }2(\ell'-q')-(\ell-q)<J\leq p'-q',\label{theta-null1} \\
\3\Theta_a^{~J}&=\3\eta_{a,\b}^{~J,\a}\phi_\a^{~\b},~\ell-q<J\leq \ell'-q'\label{theta-null2}.
\end{align}
By differentiating \eqref{theta-null1} using the structure equation and substituting \eqref{pre-last} and \eqref{pre-last-eq},
we obtain
\begin{equation}\label{theta-null1-n}
\theta_a^{~k}\wedge(\Omega_k^{~J}-\omega_k^{~J})
+\sum_{\ell-q<K\leq\ell'-q'}\Theta_a^{~K}\wedge(\Omega_K^{~J}+\Omega_{2(\ell'-q')-K+1}^{~J})
=\eta_{a,\b}^{~J,\a}\theta_\a^{~k}\wedge\theta_k^{~\b}\mod \phi
\end{equation}
for $1\leq J\leq \ell-q$ or $2(\ell'-q')-(\ell-q)<J\leq p'-q'.$

Suppose $(\eta_{a,\b}^{~J,\a})\neq 0$ for some $a>q$.
In view of Lemma~\ref{eds}, the left-hand side of \eqref{theta-null1-n} has at most $(\ell'-q')-(\ell-q)$ linearly independent $(1,0)$ forms while
the right-hand side has at least $(p-q)$ linearly independent $(1,0)$ forms.
Hence under the condition $\ell'-q'<2(\ell-q)$, we obtain
$$\eta_{a,\b}^{~J,\a}=0,\quad\forall a>q,$$
i.e. for $a>q,$
\begin{equation*}\label{theta-null1'}
\Theta_a^{~J}=0,\quad 1\leq J\leq \ell-q~\text{or }2(\ell'-q')-(\ell-q)<J\leq p'-q'.
\end{equation*}

Let $\a\leq q$ and $J\leq \ell-q$ or $J>2(\ell'-q')-(\ell-q).$ Then \eqref{theta-null1-n} becomes
$$\sum_{\ell-q<K\leq\ell'-q'}\Theta_\a^{~K}\wedge(\Omega_K^{~J}+\Omega_{2(\ell'-q')-K+1}^{~J})
=\sum_{\g\neq \a}\eta_{\a,\b}^{~J,\g}\theta_\g^{~k}\wedge\theta_k^{~\b}\mod \phi, \theta_\a.$$
Since $\theta_\g^{~k}\wedge\theta_k^{~\b}$ has $(p-q)$-linearly independent $(1,0)$ forms, under the assumption that $(\ell'-q')<2(\ell-q)$, we obtain
$$\eta_{\a,\b}^{~J,\g}=0~\text{  if  }\g\neq \a$$
i.e.
$$\Theta_\a^{~J}=\eta_{\a,\b}^{~J}\phi_\a^{~\b},\quad 1\leq J\leq \ell-q~\text{or }2(\ell'-q')-(\ell-q)<J\leq p'-q'$$
for some $\eta_{\a,\b}^{~J}$ and \eqref{theta-null1-n} becomes
\begin{equation}\label{determine-eta}
\theta_\a^{~k}\wedge(\Omega_k^{~J}-\omega_k^{~J}-\eta_{\a,\b}^{~J}\theta_k^{~\b})
+\sum_{\ell-q<K\leq\ell'-q'}\Theta_\a^{~K}\wedge(\Omega_K^{~J}+\Omega_{2(\ell'-q')-K+1}^{~J})
=0\mod\phi.
\end{equation}

Choose a frame satisfying \eqref{theta1}, i.e.
$$\Theta_1^{~J}=0\mod \phi,\{\theta_\a,~\a\geq 2\}, \quad\ell-q<J\leq 2(\ell'-q')-(\ell-q)$$
and consider \eqref{determine-eta} for $\a=1.$
Then we obtain
$$\theta_1^{~k}\wedge(\Omega_k^{~J}-\omega_k^{~J}-\eta_{1,\b}^{~J}\theta_k^{~\b})=0\mod \phi, \theta_\a, \quad\a\geq 2,$$
which implies
$$\Omega_k^{~J}-\omega_k^{~J}=\eta_{1,\b}^{~J}\theta_k^{~\b}\mod\phi,\theta.$$
By substituting this into \eqref{determine-eta} for arbitrary $\a\leq q$, we obtain
$$(\eta_{1,\b}^{~J}-\eta_{\a,\b}^{~J})\theta_\a^{~k}\wedge\theta_k^{~\b}
+\sum_{\ell-q<K\leq\ell'-q'}\Theta_\a^{~K}\wedge(\Omega_K^{~J}+\Omega_{2(\ell'-q')-K+1}^{~J})
=0\mod\phi.$$
Since $\theta_\a^{~k}\wedge\theta_k^{~\b}$ has $(p-q)$-linearly independent $(1,0)$ forms, under the assumption that $(\ell'-q')<2(\ell-q),$ we obtain
$$\eta_{1,\b}^{~J}-\eta_{\a,\b}^{~J}=0,\quad \forall \a\leq q,$$
i.e.
$$\Theta_\a^{~J}=\eta_\b^{~J}\phi_\a^{~\b},\quad 1\leq J\leq \ell-q~\text{or }2(\ell'-q')-(\ell-q)<J\leq p'-q'$$
for some $\eta_\b^{~J}.$

By differentiating \eqref{theta-null2} using the structure equation and substituting \eqref{theta-null2}, we obtain
\begin{equation}\label{theta-null2-n}
\theta_a^{~k}\wedge(\Omega_k^{~J}-\Omega_k^{~2(\ell'-q')-J+1})+\sum_{\ell-q<K\leq\ell'-q'}\Theta_a^{~K}\wedge\3\Omega_K^{~J}=\3\eta_{a,\b}^{~J,\a}\theta_\a^{~j}\wedge
\theta_j^{~\b}\mod\phi
\end{equation}
for $J=\ell-q+1, \ldots, \ell'-q'$, where
$$\3\Omega_K^{~J}:=\Omega_K^{~J}+\Omega_{2(\ell'-q')-K+1}^{~J}-\left(\Omega_K^{~2(\ell'-q')-J+1}
+\Omega_{2(\ell'-q')-K+1}^{~2(\ell'-q')-J+1}\right),\quad J,K\leq \ell'-q'.$$
By counting the maximal linearly independent forms on the right and the left-hand sides of \eqref{theta-null2-n} as before, we obtain
$$\3\eta_{a,\b}^{~J,\a}=0\quad a>q,$$
i.e.
\begin{equation*}\label{theta-null2'}
\3\Theta_a^{~J}=0,\quad J=\ell-q+1, \ldots, \ell'-q',\quad a>q
\end{equation*}
and
$$\3 \eta_{\a,\b}^{~J,\g}=0,~\quad \a\neq \g,$$
$$\theta_\a^{~k}\wedge(\Omega_k^{~J}-\Omega_k^{~2(\ell'-q')-J+1}-\3\eta_{\a,\b}^{~J}\theta_k^{~\b})
+\sum_{\ell-q<K\leq\ell'-q'}\Theta_a^{~K}\wedge\3\Omega_K^{~J}
=0\mod\phi,~\quad \a\leq q$$
for some $\3 \eta_{\a,\b}^{~J}.$
Then by following the same argument, we obtain
$$\3\Theta_\a^{~J}=\3\eta_\b^{~J}\phi_\a^{~\b},\quad \ell-q< J\leq \ell'-q'$$
for some $\3 \eta_\b^{~J}.$

Now choose a frame change given by
$$
\widetilde Z_a'=Z_a',\quad \
\widetilde X_J'=X_{J}' + C_J^{~b}Z_b',\quad
\widetilde Y_a'=Y_a'+A_a^{~b}Z_b'+B_a^{~J}X_J',
$$
where
$$B_a^{~J}=\eta_a^{~J}\quad \text{   if  }a\leq q~\text{ and  }1\leq J\leq \ell-q~\text{or }2(\ell'-q')-(\ell-q)<J\leq p'-q' $$
and
$$B_a^{~J}=\3\eta_a^{~J}\quad\text{   if  }a\leq q~\text{ and  }\ell-q< J\leq \ell'-q' $$
and $0$ otherwise. Then the new $\Theta_a^{~J}$ satisfies
\begin{align*}
\Theta_a^{~J}&=0,\quad 1\leq J\leq \ell-q~\text{or }2(\ell'-q')-(\ell-q)<J\leq p'-q' ,\\
\3\Theta_a^{~J}&=0,\quad \ell-q< J\leq \ell'-q'.
\end{align*}

Similar argument for the case of $\Phi_1^{~1}=-\phi_1^{~1}$ will provide an analogous adaptation of the frame.
Summing up, we obtain the following proposition.
\begin{proposition}\label{pfaffian}
For any transversal CR-embedding $f\colon S^{\ell}_{q,p}\to S^{\ell'}_{q',p'}$
there is a smooth choice of $S^{\ell}_{q,p}$-frame and $S^{\ell'}_{q',p'}$-frame
such that the pulled back forms satisfy either
\begin{align}
\Phi_a^{~b}-\phi_a^{~b}&=0,\nonumber\\
\Theta_a^{~J}-\theta_a^{~J}&=0, ~1\leq J\leq \ell-q~\text{or }2(\ell'-q')-(\ell-q)<J\leq p'-q',\label{last-theta}\\
\3\Theta_a^{~J}&=0,~\ell-q<J\leq \ell'-q'\label{last-eq}
\end{align}
or
\begin{align}
\Phi_a^{~b}+\phi_a^{~b}&=0,\nonumber\\
\Theta_a^{~J}-\theta_a^{~p'-q'-J+1}&=0,~1\leq J\leq m-q\text{  or  }2(\ell'-q')-(m-q)<J\leq p'-q',\label{last-theta1}\\
\3\Theta_a^{~J}&=0,~m-q<J\leq \ell'-q'\label{last-eq-hat}
\end{align}
where
$$\3\Theta_a^{~J}=\Theta_a^{~J}-\Theta_a^{~2(\ell'-q')-J+1}.$$
\end{proposition}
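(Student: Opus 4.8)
The plan is to obtain Proposition~\ref{pfaffian} as a direct refinement of Lemma~\ref{eds}: Lemma~\ref{eds} already produces a frame in which all the relevant identities hold \emph{modulo $\phi$}, so what remains is to kill the $\phi$-terms by one further frame change of the last type described in Section~\ref{preli} (the one with parameters $A_a^{~b}$, $B_a^{~J}$, $C_J^{~b}$). I would treat the two alternatives ($\Phi_a^{~b}=\phi_a^{~b}$ and $\Phi_a^{~b}=-\phi_a^{~b}$) in parallel, since, as the excerpt indicates, the second reduces to the first by the obvious reindexing $J\mapsto p'-q'-J+1$; so I would write out the argument for $\Phi_a^{~b}=\phi_a^{~b}$ and then state that the other case is analogous.

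First I would record, using Lemma~\ref{eds}, the expansions
$$\Theta_a^{~J}-\theta_a^{~J}=\eta_{a,\b}^{~J,\a}\phi_\a^{~\b}\quad(1\le J\le\ell-q\text{ or }2(\ell'-q')-(\ell-q)<J\le p'-q'),\qquad \3\Theta_a^{~J}=\3\eta_{a,\b}^{~J,\a}\phi_\a^{~\b}\quad(\ell-q<J\le\ell'-q'),$$
and then differentiate each of these using the structure equations \eqref{seq} (and the structure equations for $\Theta_a^{~J}$ themselves, i.e. $d\Theta_a^{~J}=\Theta_a^{~K}\wedge\Theta_K^{~J}+\Theta_a^{~b}\wedge\Theta_b^{~J}$ etc.), substituting $\Phi_a^{~b}=\phi_a^{~b}$ and the mod-$\phi$ identities of Lemma~\ref{eds}. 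The result is precisely the pair of identities \eqref{theta-null1-n} and \eqref{theta-null2-n} appearing in the ``Determination of $\Theta_a^{~J},\3\Theta_a^{~J}$'' subsection: the left-hand side involves $\theta_a^{~k}\wedge(\Omega_k^{~J}-\omega_k^{~J})$ plus a sum over the few indices $\ell-q<K\le\ell'-q'$, and the right-hand side is $\eta_{a,\b}^{~J,\a}\,\theta_\a^{~k}\wedge\theta_k^{~\b}$ (respectively $\3\eta_{a,\b}^{~J,\a}\,\theta_\a^{~j}\wedge\theta_j^{~\b}$). The crucial counting point is that the left-hand side contains at most $(\ell'-q')-(\ell-q)$ linearly independent $(1,0)$ forms among the $\Theta_a^{~K}$, whereas the right-hand side, if nonzero, forces at least $(p-q)\ge(\ell-q)$ of them; under the hypothesis $\ell'-q'<2(\ell-q)$ these cannot match unless the right-hand side vanishes. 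This yields $\eta_{a,\b}^{~J,\a}=0$ for $a>q$, then forces $\eta_{\a,\b}^{~J,\g}=0$ for $\g\ne\a$, and finally — after choosing the frame of \eqref{theta1} so that $\Omega_k^{~J}-\omega_k^{~J}=\eta_{1,\b}^{~J}\theta_k^{~\b}\bmod\phi,\theta$ — forces $\eta_{\a,\b}^{~J}$ to be independent of $\a$, i.e. $\Theta_\a^{~J}=\eta_\b^{~J}\phi_\a^{~\b}$, and similarly $\3\Theta_\a^{~J}=\3\eta_\b^{~J}\phi_\a^{~\b}$.

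Once the residual $\phi$-coefficients have been shown to have this special shape $\eta_\b^{~J}$ (and $\3\eta_\b^{~J}$), independent of the lower index $\a=a$, I would apply the change of frame from the end of Section~\ref{preli}, choosing $B_a^{~J}=\eta_a^{~J}$ for $a\le q$ in the ranges $1\le J\le\ell-q$ or $2(\ell'-q')-(\ell-q)<J\le p'-q'$, $B_a^{~J}=\3\eta_a^{~J}$ for $\ell-q<J\le\ell'-q'$, and $B_a^{~J}=0$ otherwise, with $C_J^{~a}$ and $A_a^{~b}$ determined by the compatibility relations $C_J^{~a}+B_J^{~a}=0$ and $A_a^{~b}+\overline{A_b^{~a}}+B_a^{~J}B_J^{~b}=0$ so that the new frame is again an $S^{\ell'}_{q',p'}$-frame. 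By the transformation rule $\widetilde\Theta_a^{~J}=\Theta_a^{~J}-\Phi_a^{~b}B_b^{~J}=\Theta_a^{~J}-\phi_a^{~b}\eta_b^{~J}$ this change exactly cancels the $\phi$-terms while leaving $\Phi_a^{~b}=\phi_a^{~b}$ untouched; hence the new forms satisfy \eqref{last-theta} and \eqref{last-eq} on the nose. Running the mirror-image argument for $\Phi_a^{~b}=-\phi_a^{~b}$ gives \eqref{last-theta1} and \eqref{last-eq-hat}, completing the proof.

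I expect the main obstacle to be purely bookkeeping: carrying the differentiation of \eqref{theta-null1} and \eqref{theta-null2} through the block-structure equations correctly, keeping straight which index ranges survive after the vanishing of Lemma~\ref{eds}, and verifying that the dimension-count ``$\le(\ell'-q')-(\ell-q)$ on the left, $\ge(p-q)$ on the right'' is valid at every stage — in particular that the independence conditions such as \eqref{indep1} are preserved by the intermediate rotations so that the counting genuinely applies. The conceptual content is entirely in the inequality $\ell'-q'<2(\ell-q)$; no new idea beyond the ones already used for $\Phi_1^{~1},\Phi_2^{~2}$ is needed, only the observation that the same argument, once the $\phi$-coefficients are seen to be $\a$-independent, can be absorbed by a single admissible frame change.
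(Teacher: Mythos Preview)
Your proposal is correct and follows essentially the same route as the paper: start from Lemma~\ref{eds}, write the residual $\phi$-coefficients $\eta_{a,\b}^{~J,\a}$ and $\3\eta_{a,\b}^{~J,\a}$, differentiate to obtain \eqref{theta-null1-n} and \eqref{theta-null2-n}, use the dimension count $(\ell'-q')-(\ell-q)<\ell-q\le p-q$ to force first $\eta_{a,\b}^{~J,\a}=0$ for $a>q$, then $\eta_{\a,\b}^{~J,\g}=0$ for $\g\ne\a$, then (via the special frame of \eqref{theta1}) $\a$-independence of $\eta_{\a,\b}^{~J}$, and finally absorb the remaining $\eta_\b^{~J},\3\eta_\b^{~J}$ by the frame change with $B_a^{~J}$ exactly as you describe. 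The only point to watch in the bookkeeping is that for $a=\a\le q$ the term $\theta_\a^{~k}\wedge(\Omega_k^{~J}-\omega_k^{~J})$ on the left of \eqref{theta-null1-n} is not itself short, so the counting argument at that stage is applied $\bmod\ \theta_\a$ (as the paper does) rather than to the full equation.
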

\medskip

We call such a frame an $S^{\ell'}_{q',p'}$-\emph{frame adapted to} $f$.
If $\ell'-q'=\ell-q$ or $\ell'-q'=m-q$, then \eqref{last-eq} or \eqref{last-eq-hat} is an empty condition.
In \S \ref{null-sp}, we assume that $\ell'-q'>\ell-q$ or $\ell'-q'>m-q$ and then adapt $S^{\ell'}_{q',p'}$-frames further to $f$.
\medskip

%
After a frame change of $X_J'$ by rotation, we can choose a sequence of integers $n_1\leq\cdots\leq n_q\leq (\ell'-q')-(\ell-q)$ such that
\begin{equation}\label{hat-theta-indep}
\Theta_\a^{~\ell-q+n_{\a-1}+1}\wedge\cdots\wedge\Theta_\a^{~\ell-q+n_\a}\neq 0\mod\phi,
~\Theta_\a^{~\ell-q+n_\a+1}=\cdots=\Theta_\a^{~2(\ell'-q')-n_\a-1}=0\mod\phi.
\end{equation}
Then together with \eqref{last-theta}, we obtain
\begin{equation}\label{null-Z}
f_*(T^{1,0}_ZS_{q,p}^\ell)\subset Hom_{\mathbb{C}}(f(Z), (f(Z)+V_Z+W_Z)/F(Z)),
\end{equation}
where
$$V_Z:=\text{\rm span}\{X_j', ~j\leq \ell-q~\text{or }2(\ell'-q')-(\ell-q)<J\leq p'-q'\}$$
and
$$W_Z:=\text{\rm span}\{X_J'+X_{2(\ell'-q')-J+1}',~J=\ell-q+1,\ldots,\ell-q+n_q\}.$$


Finally by differentiating \eqref{last-theta} or \eqref{last-theta1} for $a>q$ and $J=j$,
we obtain either
\begin{equation}\label{Psi-a}
\Psi_a^{~\b}\wedge\theta_\b^{~j}+\sum_{\ell-q<K\leq \ell'-q'}\Theta_a^{~K}\wedge(\Omega_K^{~j}+\Omega^{~j}_{~2(\ell'-q')-K+1})=0,
\end{equation}
or
\begin{equation*}\label{Psi-a1}
\Psi_a^{~\b}\wedge\theta_\b^{~p'-q'-j+1}+\sum_{m-q<K\leq \ell'-q'}\Theta_a^{~K}\wedge(\Omega_K^{~j}+\Omega^{~j}_{~2(\ell'-q')-K+1})=0
\end{equation*}
for $1\leq j\leq \ell-q~\text{or }(p'-q')-(m-q)<j\leq p'-q'$.



\section{Fundamental forms of $f$ and fixed null space}\label{null-sp}
In this section, we assume $1<\ell-q$ and $\ell-q<\ell'-q'$ if $\Phi_1^{~1}=\phi_1^{~1}$ or $m-q<\ell'-q'$ if $\Phi_1^{~1}=-\phi_1^{~1}$.
Then we reduce the freedom of $S^{\ell'}_{q',p'}$-frames adapted to $f$.
We will show the reduction process under the assumption that $\Phi_1^{~1}=\phi_1^{~1}$.
The same argument will provide the same reduction of
freedom for the case of $\Phi_1^{~1}=-\phi_1^{~1}$.
\medskip

For a complex manifold $\mathcal{N}$ in a projective space, one can define projective fundamental forms $\mathbb{FF}^{(k)}_{\mathcal{N}},k\in \mathbb{N}$.(\cite{L})
We consider Grassmannian as a complex manifold in a projective space $\mathbb{P}^N$ under Pl\"{u}cker embedding.
Let $p\in \mathbb{P}^N$ and let $\mathcal{N}$ be a complex manifold in $\mathbb{P}^N$ through $p$. Then there exists $k_0$ depending on $p$ such that
$${\rm span }\{\mathbb{FF}^{(k)}_{\mathcal{N}}(p),k=1,\ldots,k_0\}={\rm span }\{\mathbb{FF}^{(k)}_{\mathcal{N}}(p),k\in \mathbb{N}\}.$$
Choose the smallest such $k_0$. At generic points of $\mathcal{N}$, $k_0$ is a constant.

Let $f:S^{\ell}_{q,p}\to S^{\ell'}_{q',p'}$ be a germ of a CR embedding. The Levi form of $S^{\ell}_{q,p}$ has $(\ell-q)$ negative and $(m-q)$ positive eigenvalues. Since we assumed $(m-q)\geq (\ell-q)\geq 1,$ the image of the Levi form becomes the complex normal bundle of $S_{q,p}^\ell.$ Hence $f$ extends to a neighborhood $U\subset Gr(q,p)$ of $S^{\ell}_{q,p}$ as a germ of a holomorphic embedding(\cite{BoP82}). A point $Z_0\in S^{\ell}_{q,p}$ is said to be {\em generic} if
$$ \dim{\rm span}\{\mathbb{FF}^{(k)}_{f(U)}(Z_0),k=1,\ldots,k_0\}=\max_{Z\in S^{\ell}_{q,p}}\left(\dim{\rm span }\{\mathbb{FF}^{(k)}_{f(U)}(Z),k=1,\ldots,k_0\}\right),\quad \forall k_0\geq 1.$$
\medskip

For a Grassmannian submanifold $Gr(q,F)\subset S^{\ell}_{q,p}$, denote by
$F_\sharp\subset \mathbb{C}^{p'+q'}$ the unique minimal subspace such that
$$f(Gr(q,F))\subset Gr(q',F_\sharp).$$
Note that since CR maps preserve complex submanifolds in CR manifolds, $Gr(q', F_\sharp)$ is a complex submanifold in $ S^{\ell'}_{q',p'}$ and
therefore
$$\langle~,\rangle_{\ell',m'}|_{F_\sharp}=0$$
and
$$F_\sharp\subset f(Z)+\text{\rm span}\{X_J',~1\leq J\leq p'-q'\}$$
for any $Z\in Gr(q, F)$ and $S^{\ell'}_{q',p'}$-frame $\{Z_a', X_J',Y_a'\}$ at $f(Z)$.
Let
$$\ell_\sharp:=\max(\dim F_\sharp),$$
where the maximum is taken over all Grassmannian submanifolds $Gr(q, F)\subset S^{\ell}_{q,p}$.
Note that by continuity of the fundamental forms, we can show that
$$\dim F_\sharp=\ell_\sharp$$
for generic maximal complex submanifolds $Gr(q, F)\subset S^{\ell}_{q,p}$.

Let
$$
   \mathcal{P}^\ell:=\{(Z,E)\in \mathcal{F}(q,\ell, p+q): Gr(q,E)\subset S_{q,p}^\ell\}
$$
and
$$
   \mathcal{P}'^{\ell_\sharp}:=\{(Z',E')\in \mathcal{F}(q',\ell_\sharp,p'+q'): Gr(q', E')\subset S_{q',p'}^{\ell'}\}.
$$
Then $\mathcal{P}^\ell$ and $\mathcal{P}'^{\ell_\sharp}$ are fiber bundles over $S_{q,p}^\ell$ and $S_{q',p'}^{\ell'}$, respectively.
As in \cite{Ng}, we define a bundle map $f^\sharp:\mathcal{P}^\ell\to \mathcal{P}'^{\ell_\sharp}$ by
$$ f^\sharp((Z,E))=(f(Z),E_\sharp).$$
Then by the property of projective fundamental forms, $f^\sharp$ is a meromorphic map on a neighborhood of $\mathcal{P}^\ell$. The reduction of the frames will be described in terms of the image of $f^\sharp$.
\medskip

From now on, we assume $\Phi_1^{~1}=\phi_1^{~1}$. Fix a generic point $Z\in S^{\ell}_{q,p}$ and choose an $S^{\ell'}_{q',p'}$-frame $\{Z_a', X_J', Y_a'\}$ adapted to $f$ at $f(Z)$. In particular, $f$ satisfies
\begin{align}
\Theta_a^{~J}-\theta_a^{~J}&=0,~1\leq J\leq \ell-q~\text{ or }~2(\ell'-q')-(\ell-q)<J\leq p'-q',\label{tangent} \\
\3\Theta_a^{~J}&=0,~\ell-q<J\leq \ell'-q'.\nonumber
\end{align}
Let
\begin{align*}
V_{Z}:&=\text{ span }\{X_j',~j\leq \ell-q~\text{or }j>(p'-q')-(m-q)\},\\
V_{Z}^\perp:&={\rm span}\{X_J',~\ell-q<J\leq (p'-q')-(m-q)\}.
\end{align*}
After a frame change by rotation, we may assume that $V_Z$ is orthogonal to $V_Z^\perp$ with respect to the standard Euclidean metric.
For $E\in \Lambda_{Z}$, we define $E_{V_{Z}}$ as follows:

After a suitable frame change by rotation, write
$$E=Z+\text{\rm span}\{\3 X_j,~j=1,\ldots,\ell-q\}$$
and let
$$E_{V_Z}:=\text{\rm span}\{ \3 X_j',~j=1,\ldots,\ell-q\}.$$

\begin{lemma}\label{null sp}
There exists a space $N_Z \in N(\ell',m')$
such that
$$N_Z\subset V^\perp_{Z}$$
and
\begin{equation}\label{in}
 E_\sharp\subset f(Z)+ E_{V_{Z}}+N_Z
\end{equation}
for all $E\in \Lambda_{Z}^\ell$.
\end{lemma}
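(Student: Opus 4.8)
### Proof proposal

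The plan is to extract $N_Z$ as the common ``extra'' directions appearing across all the minimal enveloping spaces $E_\sharp$, and to show these directions are forced to lie in $V_Z^\perp$ and to be null. First I would recall from the discussion preceding the lemma that for a generic maximal complex submanifold $Gr(q,E)$ we have $\dim E_\sharp=\ell_\sharp$, that $\langle\cdot,\cdot\rangle_{\ell',m'}|_{E_\sharp}=0$, and that $E_\sharp\subset f(Z)+\text{span}\{X_J',\ 1\le J\le p'-q'\}$. Thus writing $\pi$ for the projection modulo $f(Z)$, each $\pi(E_\sharp)$ is an $(\ell_\sharp-q')$-dimensional null subspace of $\text{span}\{X_J'\}$. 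Using \eqref{tangent} together with the independence condition \eqref{hat-theta-indep} and the containment \eqref{null-Z}, I would first show $E_{V_Z}\subset \pi(E_\sharp)$: the image $f_*(T_Z^{1,0}Gr(q,E))$ already detects the directions $\3X_j'$ with $j\le \ell-q$ (these come from the normalized tangential part $\Theta_a^{~J}=\theta_a^{~J}$), so $E_\sharp$ must contain $f(Z)+E_{V_Z}$. Hence $\pi(E_\sharp)=E_{V_Z}\oplus N_{Z,E}$ for some complementary subspace $N_{Z,E}\subset\text{span}\{X_J'\}$ of dimension $\ell_\sharp-\ell$, and the task is to show $N_{Z,E}$ can be taken independent of $E$ and sits in $V_Z^\perp$.

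The key mechanism is the bundle/meromorphic map $f^\sharp:\mathcal P^\ell\to \mathcal P'^{\ell_\sharp}$ built from the projective fundamental forms, $(Z,E)\mapsto (f(Z),E_\sharp)$: its restriction to the fiber $\mathcal P^\ell_Z$ over a generic $Z$ is a holomorphic (meromorphic, but defined at generic points) map from $\mathcal P^\ell_Z$, which via $F_Z$ is biholomorphic to $S^{\ell-q}_{\ell-q,p-\ell}$ (cf.\ \S\ref{max-complex}), into the corresponding fiber of $\mathcal P'^{\ell_\sharp}$. I would vary $E$ within $\Lambda_Z^\ell$ and differentiate the relation $E_\sharp = f(Z)+E_{V_Z}+N_{Z,E}$ along the fiber. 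Since $E\mapsto E_{V_Z}$ is exactly the ``standard'' linear embedding of $\Lambda_Z^\ell\cong S^{\ell-q}_{\ell-q,p-\ell}$ into $Gr(\ell-q, V_Z)$, the only room for $E_\sharp$ to move is in the $N$-part; the null condition $\langle\cdot,\cdot\rangle_{\ell',m'}|_{E_\sharp}=0$ for all $E$ forces, by polarization, that $N_{Z,E}\perp_{\ell',m'} E_{V_Z}$ and that $N_{Z,E}$ is itself null. Because $E_{V_Z}$ ranges (as $E$ ranges over $\Lambda_Z^\ell$) over a family of null subspaces of $V_Z$ spanning all of $V_Z$ in the sense of Lemma~\ref{not-flat}, the orthogonality $N_{Z,E}\perp_{\ell',m'}V_Z$ kills all components of $N_{Z,E}$ along $V_Z$; combined with the inner-product matrix of $\langle\cdot,\cdot\rangle_{\ell',m'}$ restricted to $\text{span}\{X_J'\}$ (block-diagonal with $V_Z$ and $V_Z^\perp$ orthogonal after the rotation normalization), this yields $N_{Z,E}\subset V_Z^\perp$.

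It remains to see $N_{Z,E}$ is independent of $E$. Here I would use that $f^\sharp$ restricted to the fiber is holomorphic together with the fact that the ``tangential'' part $E_{V_Z}$ already accounts for the full variation of the fundamental forms in the $V_Z$-directions — so the derivative of $E\mapsto N_{Z,E}$ along $\mathcal P^\ell_Z$ would produce directions in $V_Z$ unless it vanishes. More precisely, differentiating \eqref{in} and using \eqref{Psi-a} to control $\Omega_K^{~j}$ for $\ell-q<K\le\ell'-q'$, any motion of the $N$-part couples to the $\omega$-forms of $S^{\ell-q}_{\ell-q,p-\ell}$ and hence to $V_Z$; dimension counting under $\ell'-q'<2(\ell-q)$ (the same inequality used throughout \S3--4) shows this forces the $N$-component to be locally constant along the fiber, and since the fiber is connected, $N_{Z,E}=:N_Z$ is a single space. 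Setting $N_Z$ to be this common null subspace of $V_Z^\perp$ gives \eqref{in}. \emph{The main obstacle} I anticipate is making the ``variation along the fiber'' argument rigorous at the level of the moving frame — i.e.\ carefully identifying which components of $dN_{Z,E}$ are $V_Z$-valued versus $V_Z^\perp$-valued and invoking the independence/dimension inequality correctly — since this is where the hypothesis $\ell'-q'<2(\ell-q)$ is really consumed and where the meromorphy of $f^\sharp$ (rather than mere smoothness) matters.
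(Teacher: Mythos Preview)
Your outline contains a genuine circular gap. You try to establish $N_{Z,E}\subset V_Z^\perp$ (Step~2) \emph{before} proving that $N_{Z,E}$ is independent of $E$ (Step~3), but your argument for Step~2 secretly needs Step~3: from the nullity of $E_\sharp$ you only get $N_{Z,E}\perp_{\ell',m'} E_{V_Z}$ for that particular $E$, not $N_{Z,E}\perp_{\ell',m'} V_Z$. Letting $E$ vary so that the $E_{V_Z}$ sweep out $V_Z$ tells you nothing about a \emph{fixed} $N_{Z,E}$, because $N_{Z,E}$ is moving too. Conversely, your Step~3 (``differentiating along the fiber and using~\eqref{Psi-a}'') is too vague: the relation~\eqref{Psi-a} concerns $\Psi_a^{~\b}$ for $a>q$ on $S_{q,p}^\ell$, not the fiber direction of $\mathcal{P}^\ell_Z$, and there is no a~priori reason why the derivative of $E\mapsto N_{Z,E}$ should be $V_Z$-valued. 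So neither step can bootstrap the other.

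The paper breaks this circularity by a different, constructive device that you are missing. Fix one generic $E$ and isolate the ``extra'' null piece $L$ of $E_\sharp$. The key sublemma (the paper's Lemma~\ref{sub-null} and the discussion after it) shows, using the fundamental forms of $f_L:=\pi_L\circ f$ and the hypothesis $\ell'-q'<2(\ell-q)$, that $L$ is already captured by an $(\ell-1)$-dimensional subspace $F\subset E$: there is an open set $\mathcal U_E$ of such $F$ with $F_L=L$. This is what lets you vary one direction of $E$ while keeping the extra piece fixed: for $\tilde E$ with $\tilde E\cap E=F\in\mathcal U_E$ you get $\tilde E_L=L$, and then the nullity of $\tilde E_\sharp$ forces the new tangential vector $T'\in \tilde E_{V_Z}$ to satisfy $\langle T',L_J'\rangle_{\ell',m'}=0$. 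Since $T'$ is an arbitrary direction in $V_Z$, this yields $L_J'\in V_Z^\perp$; a chain argument over the sets $\mathcal U_j$ then propagates the inclusion to every $\tilde E\in\Lambda_Z^\ell$, and one sets $N_Z=W_Z+\mathrm{span}\{L_J'\}$. The essential idea you are lacking is precisely this ``$L$ is determined by a codimension-one piece of $E$'' step: without it there is no way to decouple the variation of $E_{V_Z}$ from the variation of the extra null part.
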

Note that $N_Z$ depends on the choice of the frame $\{X_J'\}$.
The proof of Lemma~\ref{null sp} will be given in several steps.

Choose $E\in \Lambda_{Z}$ such that $\dim E_\sharp=\ell_\sharp$. Assume that
$$E=Z+\text{\rm span}\{\3 X_j,~j=1,\ldots,\ell-q\}.$$
Then
$Gr(q, E)$ is an integral manifold of
\begin{equation*}\label{defining}
\3\theta_\a^{~j}=0,\quad j=1,\ldots, \ell-q.
\end{equation*}
Since
$$\3\Theta_a^{~j}:=\Theta_a^{~j}-\Theta_a^{~p'-q'-j+1}=\3\theta_a^{~j},\quad j=1,\ldots,\ell-q,$$
we obtain
\begin{equation}\label{Theta-vanish}
\3\Theta_\a^{~j}=0,\quad j=1,\ldots, \ell-q
\end{equation}
with independence condition
\begin{equation}\label{independent'}
\bigwedge_{\a=1}^{q}(\Theta_\a^{~1}\wedge\cdots\wedge\Theta_\a^{~\ell-q})\neq 0.
\end{equation}
Since $Gr(q',E_\sharp)$ is a complex submanifold in $S_{q', p'}^{\ell'}$, $Gr(q',E_\sharp)$ is an integral manifold of
$$\Phi_a^{~b}=0,~\forall a, b.$$
Choose $b=\b\leq q.$ Then by structure equation for $\Phi$ with \eqref{tangent} and \eqref{Theta-vanish},
we obtain
$$d\Phi_a^{~\b}=\sum_{j\leq \ell-q}\3\Theta_a^{~j}\wedge\Theta_j^{~\b}=0.$$
Hence by \eqref{independent'} we obtain that on $Gr(q',E_\sharp)$,
\begin{equation}\label{tangent'}
\3\Theta_a^{~j}=0,\quad j=1,\ldots, \ell-q,
\end{equation}
which implies
%
%
that on $Gr(q',E_\sharp)$, we obtain
\begin{equation}\label{tangent''}
dZ_a'=\sum_{j\leq \ell-q}\Theta_a^{~j}\3X_j'+\sum_{\ell-q<J\leq p'-q'-(\ell-q)}\Theta_a^{~J}X_J'\mod Z'.
\end{equation}
By \eqref{theta1} of Lemma~\ref{r}, we may assume that
$$\Theta_1^{~J}=0\mod~\{\phi, \theta_\a,~\a\geq 2\},\quad \ell-q<J\leq 2(\ell'-q')-(\ell-q).$$
Consider
\begin{equation}\label{E'span}
dZ_1'=\sum_{j\leq \ell-q}\theta_1^{~j}\3X_j'\mod \theta_\a, \a\geq 2
\end{equation}
modulo $f(Z)$
on $f_*(T_{Z}Gr(q, E))$. Since $f_*(T_{Z}Gr(q, E))\subset T_{f(Z)} Gr(q',E_\sharp)$, we obtain
$$\3 X_j'\in E_\sharp,~j=1,\ldots,\ell-q,$$
i.e.
$$f(Z)+E_{V_Z}\subset E_\sharp.$$
In particular,
$$\ell_\sharp\geq q'+ (\ell-q).$$

Let $W_Z$ be as in \eqref{null-Z}. Then $W_Z$ is the smallest subspace in $N(\ell',m')$ such that
$$W_Z\subset V_Z^\perp$$
and
$$f_*(T_Z(S_{q,p}^\ell))\subset T_{f(Z)}(Gr(q, f(Z)+V_Z+W_Z)).$$
Moreover, since we assumed $\ell'-q'<2(\ell-q)$, by the independent condition \eqref{hat-theta-indep}, we may assume that the orthogonal projection of $E_\sharp$ to $W_Z$
is surjective.
Choose linearly independent vectors $L_J', ~J=\ell-q+1,\ldots,d_\sharp$ orthogonal to $N$ such that
$$E_\sharp=f(Z)+\text{\rm span}\{\3 X_j', L_J', ~j=1,\ldots,\ell-q,~J=\ell-q+1,\ldots,\ell-q+d_\sharp\}+ W_Z.$$
Since $Gr(q',E_\sharp)\subset S_{q',p'}^{\ell'}$, we obtain
$$\langle \3 X'_j, L_J'\rangle_{\ell',m'}=0,$$
which implies
$$L_J'\in \text{\rm span}\{\3 X_j', X_K',1\leq j\leq \ell-q,~\ell-q<K\leq p'-q'-(\ell-q)\}.$$
Hence we may assume that
$$L_J'\in \text{\rm span}\{ X_K',~\ell-q<K\leq p'-q'-(\ell-q)\}.$$
Let
\begin{equation*}\label{E-sharp-null}
L:=f(Z)+\text{\rm span}\{L_J',~j=\ell-q+1,\ldots, \ell-q+d_\sharp\}
\end{equation*}
so that
$$E_\sharp=\text{\rm span}\{\3 X_j', ~j=1,\ldots,\ell-q\}+L+ W_Z.$$
Note that
$$\text{\rm span}\{\3 X_j', ~j=1,\ldots,\ell-q\}=E_{V_Z}$$
and $L+W_Z$ is a null space with respect to $\langle~,\rangle_{\ell',m'}$.
Since $L_J'$, $J=\ell-q+1,\ldots,\ell-q+d_\sharp$ are linearly independent null vectors in $\text{\rm span}\{X_K',~\ell-q<K\leq p'-q'-(\ell-q)\}$, we obtain
$$d_\sharp\leq (\ell'-q')-(\ell-q).$$

Suppose
$d_\sharp>0.$
After a rotation of $\{X'_J\}_J$, we may assume that $L$ is orthogonal to $E_{V_Z}$ with respect to the standard Euclidean metric on $\mathbb{C}^{p'+q'}$.
Define
$$f_L:=\pi_{L}\circ f.$$
Here $\pi_L$ for a subspace $L\subset \mathbb{C}^{p'+q'}$ is defined to be a canonical map defined on a neighborhood of $f(Z)$ in $Gr(q', p')$ to $Gr(q', L)$ induced by orthogonal projection of $\mathbb{C}^{p'+q'}$ to $L$ with respect to the standard Euclidean metric.
%
In standard coordinates of $Gr(q', p')$ centered at $f(Z)$, we can write $f$ as a $q'\times p'$ matrix form
$f=(f_0, f_L),$
where
$$f_0:S^{\ell}_{q,p}\to Gr(q',f(Z)+L^\perp),~ f_L:S^{\ell}_{q,p}\to Gr(q',L).$$
For a Grassmannian submanifold $Gr(q,F)\subset S_{q,p}^\ell$ through $Z$, denote by $F_L$ the unique minimal subspace of $L$ such that
$$f_L(Gr(q,F))\subset Gr(q',F_L).$$

\begin{lemma}\label{sub-null}
Suppose that $d_\sharp> 0.$
Then there exists a subspace $F\subset E$ of dimension $q+1$ containing $Z$ such that
$$\dim F_L>q'.$$
\end{lemma}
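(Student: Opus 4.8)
The plan is to argue by contradiction: assume that for every $(q+1)$-dimensional subspace $F$ with $Z\subset F\subset E$ we have $\dim F_L=q'$, i.e.\ $f_L(Gr(q,F))$ is a single point in $Gr(q',L)$. The aim is to show that this forces $f_L$ to be constant on all of $Gr(q,E)$, which contradicts the fact that the projection of $E_\sharp$ onto $L$ is nontrivial (since $d_\sharp>0$ and the $L'_J$ were chosen to be linearly independent, so $\dim F_L=\dim L+q'>q'$ for the generic $F=E$). To get this contradiction it is enough, since $q\geq 2$ (our standing hypothesis $q>1$) and $E$ is $\ell$-dimensional with $\ell\geq q+1$ (because $\ell>q$ in this section), to observe that any two points of $Gr(q,E)$ can be joined by a chain of projective lines lying inside $Gr(q,E)$, namely the sub-Grassmannians $Gr(q,F)$ with $Z\subset F\subset E$, $\dim F=q+1$; along each such line $f_L$ is constant by assumption, hence $f_L$ is constant on $Gr(q,E)$.

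The first step is therefore to record the chain-connectedness: in $Gr(q,E)$ with $\dim E=\ell\geq q+1$, fix $Z$ and any $Z_1=\mathrm{span}\{W_1,\dots,W_q\}\subset E$. As in the proof of Lemma~\ref{chain}, one produces a sequence of $q$-planes $Z=E^{(0)}, E^{(1)},\dots,E^{(q)}=Z_1$ inside $E$ with $\dim(E^{(\a-1)}\cap E^{(\a)})=q-1$, so each consecutive pair lies in a common $(q+1)$-plane $F^{(\a)}\subset E$ containing $Z$ when $\a=1$, and after translating the base point, the general step lies in a common $Gr(q,F)$ which is a projective line. (For the step not based at $Z$ one uses that $f_L$ being locally constant on the union of all lines through a point, which all lie in some $Gr(q,F)$, $F\subset E$, forces local constancy; then connectedness of $Gr(q,E)$ upgrades this to global constancy.) The precise bookkeeping is to show the ``locally constant along every projective line through every point of $Gr(q,E)$'' property, which gives local constancy of $f_L$ on $Gr(q,E)$, and then invoke that $Gr(q,E)$ is connected.

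The main obstacle is the base-point issue: the hypothesis to be contradicted only concerns subspaces $F$ \emph{containing the fixed $Z$}, so a priori we only know $f_L$ is constant along lines through $Z$, not along arbitrary lines. The resolution is that Lemma~\ref{null sp} (which is being proved and whose earlier steps we may use) together with the adaptation of frames shows $E_\sharp\subset f(Z)+E_{V_Z}+N_Z$ and that the same structure holds after replacing $Z$ by any point $Z'\in Gr(q,E)$ near $Z$: the whole configuration is homogeneous under $SU(\ell,m)$, so the hypothesis ``$\dim F_L=q'$ for all $(q+1)$-planes $F$ through $Z$ inside $E$'' propagates to every point of $Gr(q,E)$ by applying an automorphism of $S^{\ell}_{q,p}$ preserving $Gr(q,E)$ and moving $Z$ to $Z'$; such automorphisms act transitively on $Gr(q,E)$. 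Hence $f_L$ is constant along every projective line in $Gr(q,E)$, so constant on $Gr(q,E)$, contradicting $\dim E_L=\dim L+q'>q'$. This yields the existence of the desired $F\subset E$ of dimension $q+1$ with $\dim F_L>q'$.
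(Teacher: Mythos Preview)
Your contradiction strategy has a real gap at the ``base-point'' step, and the homogeneity patch does not close it. In the standard chart on $Gr(q,E)$ centered at $Z$, identifying a nearby $q$-plane with a $q\times(\ell-q)$ matrix, each $Gr(q,F)$ with $Z\subset F=Z+\C w$ becomes the linear subspace $\{\lambda\otimes w:\lambda\in Z^*\}$, and the union over all such $w$ is precisely the cone of rank~$\le 1$ matrices. Since this section assumes $\ell-q>1$ and the main theorem assumes $q>1$, that cone is a proper subvariety, and a holomorphic map can be constant on it without being globally constant (any $2\times 2$ minor vanishes there). So constancy of $f_L$ on every $Gr(q,F)$ through $Z$ does not force constancy on $Gr(q,E)$. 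Your fix via automorphisms of $S^{\ell}_{q,p}$ preserving $Gr(q,E)$ fails because $f$ is a fixed CR map with no equivariance assumed, and $L$ was built from a frame adapted to $f$ at the specific point $Z$; moving $Z$ to $Z'$ by such an automorphism does not transport the hypothesis for $f_L$ at $Z$ to the same hypothesis for $f_L$ at $Z'$ --- it only gives a statement about $f\circ g$ and a different projection.

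The paper's argument is direct and avoids this issue entirely: since $Z$ is generic and $d_\sharp>0$, some projective fundamental form of $f_L|_{Gr(q,E)}$ at $Z$ is nonzero, so there exist $a$ and indices with $\partial^k f_a/\partial z_{\a_1}^{j_1}\cdots\partial z_{\a_k}^{j_k}(0)\neq 0$ for a minimal $k\ge 1$. Taking $F=Z+\C(c_1\3 X_{j_1}+\cdots+c_k\3 X_{j_k})$ for generic $c_i$ then makes $f_L|_{Gr(q,F)}$ nonconstant, giving $\dim F_L>q'$.
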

\begin{proof}
We use local coordinates
$$\left\{\begin{pmatrix}
z_1^{1} & \cdots & z_{1}^{\ell-q}\cr
\vdots & \ddots & \vdots\cr
z_{q}^{1} & \cdots & z_{q}^{\ell-q }
\end{pmatrix}:z_{\a}^{j}\in \mathbb{C}
\right\}, \quad \left\{\begin{pmatrix}
w_1^{1} & \cdots & w_{1}^{d_\sharp}\cr
\vdots & \ddots & \vdots\cr
w_{q'}^{ 1} & \cdots & w_{q'}^{d_\sharp}
\end{pmatrix}:w_{a}^{J}\in \mathbb{C}\right\}$$
of $Gr(q,E)$ and $Gr(q',L)$ centered at $Z$ and $f(Z)$, respectively.
Write
$$f_L=(f_{a}),$$
where $f_{a}$ is a row vector for $a=1,\ldots,q'$.
Since $Z$ is a generic point, there exists $a$ such that the $k$-th fundamental form of $f_{a}$ is nontrivial at $0$ for some $k\geq 1$. In particular,
there exist $\a_i,j_i$, $i=1,\ldots,k$ such that
$$\frac{\partial^k f_{a}}{\partial z_{\a_1}^{j_1}\cdots \partial z_{\a_k}^{j_k}}(0)\neq 0.$$
Choose the smallest such $k$. Then a subspace $F$ defined by
$$F=Z+\mathbb{C}\left\{ c_1\3X_{j_1}+\cdots+c_k\3X_{j_k}\right\}$$
for some generic $c_1,\ldots,c_k\in \mathbb{C}$
satisfies the condition.
\end{proof}

Choose $F\subset E$ as in Lemma~\ref{sub-null}.
Since the fundamental forms depend smoothly on the point and direction, generic $ F\subset E$ containing $Z$ with $\dim F=q+1$ will satisfy
$$\dim F_{L}=\max(\dim \tilde F_L) ,$$
where the maximum is taken over all $ \tilde F\subset E$ containing $Z$ with $\dim \tilde F=q+1.$
Now assume that there exists a sequence of subspaces
$Z\subset F_1\subset\cdots\subset F_d\subset E$ of $\dim F_j=q+j$ such that
$\dim F_{j-1,L}<\dim F_{j,L}$ and
$$\dim F_{j,L}=\max(\dim \tilde F_{j,L}),$$
where the maximum is taken over all $\tilde F_j\subset E$ containing $F_{j-1}$ with $\dim \tilde F_j=q+j.$
Suppose $F_{d,L}\neq L.$
Choose $F_{d,L}^\perp \subset L$ and define
$$\tilde f:=\pi_{F_{d,L}^\perp} f_{L}.$$
Since $F_{d,L}\neq L,$ $\tilde f$ is nontrivial.
In standard coordinates, write
$$\tilde f=(\tilde f_{ a}),$$
where $\tilde f_{a}$ is a row vector for $a=1,\ldots,q'$.
Since $Z$ is a generic point, there exists $a$ such that the $k$-th fundamental form of $\tilde f_{a}$ at $Z$ is nontrivial for some $k\geq 1$.
Then by the same argument of Lemma \ref{sub-null}, we can choose $ F_{d+1}\subset  E$ containing $ F_{d}$ such that
$$\dim F_{d+1,L}>\dim F_{d,L}.$$

We repeat the same procedure to choose a sequence of subspaces
$Z\subset F_1\subset\cdots\subset F_d\subset E$ with $\dim F_j=q+j$ such that
$\dim F_{j-1,L}<\dim F_{j,L}.$ Since $\ell'-q'<2(\ell-q)$ and $d_\sharp\leq \ell'-q'-(\ell-q)$, this sequence terminates for some $d<\ell-q.$ Therefore there exists an $(\ell-1)$-dimensional subspace $F$ of $E$ containing $Z$ such that
$F_L=L.$
Furthermore, by continuity of the fundamental forms, we can choose an open set $\mathcal{U}_E$ in $Gr(\ell-1,E)$ such that
$ F_L=L$ for all $ F\in \mathcal{U}_E$.
\medskip

{\em proof of Lemma~\ref{null sp}} :
Let $E$ and $L$ be as before.
Choose $\tilde E\in \Lambda_{Z}^\ell$ such that
$\dim E\cap \tilde E=\ell-1.$
We may assume
$$\tilde E=Z+\text{\rm span}\{T, \3 X_j, j=2,\ldots,\ell-q\}$$
for some null vector $T$ transversal to $E$.
Let
$$T=c^k X_k$$
and let
$$T'=\sum_{k\leq \ell-q}c^k X_k'+\sum_{k>p'-q'-(m-q)}c^k X_k'.$$
By considering \eqref{E'span} on $f_*(T_{Z}Gr(q, \tilde E))$, we obtain
$$\tilde E_{V_Z}=\text{\rm span}\{T', \3 X_j',j=2,\ldots,\ell-q\}\subset \tilde E_\sharp.$$
Assume that
$F:=E\cap\tilde E\in \mathcal{U}_E$.
Then we obtain
$$L=F_L=\tilde E_L$$
and therefore
$$\pi_L(\tilde E_\sharp)=L,$$
where $\pi_L$ is the orthogonal projection to $L$.
Since $Gr(q', \tilde E_\sharp)\subset S_{q',p'}^{\ell'}$, we obtain
\begin{equation*}
\langle T', L\rangle_{\ell',m'}=0.
\end{equation*}
In particular,
\begin{equation}\label{orthog}
\langle T', L_J'\rangle_{\ell',m'}=0,\quad J=\ell-q+1,\ldots,\ell-q+d_\sharp.
\end{equation}

Since $\tilde E$ is arbitrary, \eqref{orthog} implies that
$L_J'$ is orthogonal to $\text{\rm span}\{X_j',~j\leq \ell-q~\text{  or  }~j>p'-q'-(m-q)\}$,
i.e.
$$L_J'\in V_Z^\perp,\quad J=\ell-q+1,\ldots,\ell-q+d_\sharp.$$

%
%

Let
$$\mathcal{U}_0=\{E\}$$
and let
$$\mathcal{U}_j:=\bigcup_{F\in \mathcal{U}_{j-1}}\{\tilde E\in \Lambda_{Z}^\ell:{\rm dim}~ \tilde E\cap F\geq \ell-1 \}, \quad j\in \mathbb{N}.$$
Then by induction argument on $j$, we can show that for each $j$, generic $ \tilde E\in \mathcal{U}_j$ satisfies
$$ \pi_L(\tilde E_\sharp)=L.$$
Since
$$\tilde E_{V_Z}\subset \tilde E_\sharp\cap V_Z$$
and $L$ is orthogonal to $V_Z$, we obtain
$$\tilde E_\sharp\supset \tilde E_{V_Z}+L.$$

Let
$$N_Z:=W_Z+\text{\rm span}\{L_J',~J=\ell-q+1,\ldots,\ell-q+d_\sharp\}.$$
By counting dimension, we obtain
$$\tilde E_\sharp\subset f(Z)+\tilde E_{V_Z}+N_Z.$$
Then by Lemma~\ref{open}, we can show that \eqref{in} holds
for all $\tilde E\in \Lambda_{Z}^\ell$, which completes the proof.
\medskip

Next we will show the following.
\begin{lemma}\label{frame choice}
There exist subspace $N_\sharp\in N(\ell',m')$ and a choice of smooth $S^{\ell'}_{q',p'}$-frame adapted to $f$ such that
$$N_\sharp\subset V_Z^\perp,\quad \forall Z$$
and
$$ E_\sharp\subset f(Z)+E_{V_Z}+N_\sharp$$
for all $E\in \Lambda_Z^\ell$.
\end{lemma}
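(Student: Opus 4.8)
The idea is to upgrade the pointwise statement of Lemma~\ref{null sp} to a statement with a fixed null space $N_\sharp$ that works simultaneously at all points, and then to realize $N_\sharp$ inside a genuine smooth $S^{\ell'}_{q',p'}$-frame adapted to $f$. First I would recall that by Lemma~\ref{null sp} we already have, for each generic $Z$, a subspace $N_Z\subset V_Z^\perp$ with $E_\sharp\subset f(Z)+E_{V_Z}+N_Z$ for every $E\in\Lambda_Z^\ell$, but that $N_Z$ a priori depends on the choice of the frame $\{X_J'\}$ at $f(Z)$. The first task is to pin down $N_Z$ intrinsically. Observe that $W_Z$ is canonically determined by $f$ (it is the smallest subspace of $V_Z^\perp$ with $f_*(T_Z S_{q,p}^\ell)\subset T_{f(Z)}Gr(q',f(Z)+V_Z+W_Z)$, by the remark following \eqref{null-Z}), and the $L'_J$-part of $N_Z$ is characterized, once $W_Z$ is fixed, as the orthogonal complement inside $\bigcap_E E_\sharp$ of $f(Z)+E_{V_Z}+W_Z$ — more precisely, $f(Z)+N_Z$ is the minimal subspace of $V_Z^\perp$ such that $E_\sharp\subset f(Z)+E_{V_Z}+N_Z$ holds for all $E$, which makes it independent of the auxiliary rotations used in the proof of Lemma~\ref{null sp}. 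This minimality/intrinsic description is the conceptual heart of the argument.

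Once $N_Z$ is intrinsic, the next step is to show that $\dim N_Z$ is constant on the (connected, open, dense) generic locus and that $N_Z$ varies smoothly there; this follows because $W_Z$ and $\bigcap_{E}E_\sharp$ are built from the fundamental forms of $f^\sharp$ and from $f_*$, which depend smoothly on $Z$ on the generic set, and the rank being locally maximal it is locally constant. I would then propagate $N_Z$ along chains of maximal complex submanifolds using Lemma~\ref{chain}: if $Z_0$ and $Z_1$ are joined by a chain $Gr(q,E_0),\dots,Gr(q,E_q)$ with $\dim(E_{\alpha-1}\cap E_\alpha)=\ell-1$, then on the overlap $Gr(q,E_{\alpha-1}\cap E_\alpha)$ the containment \eqref{in} forces $(E_{\alpha-1}\cap E_\alpha)_\sharp\subset f(Z)+(E_{\alpha-1}\cap E_\alpha)_{V_Z}+N_Z$ for $Z$ in that common cycle from either side, and comparing the two descriptions of the ambient null space shows that the "extra'' part $N_Z\ominus W_Z$ is forced to be the same — i.e. it does not actually move as $Z$ moves within a maximal complex submanifold, and since any two points are chained, $N_Z$ descends to a single fixed $N_\sharp\subset\mathbb C^{p'+q'}$, still lying in $V_Z^\perp$ for every $Z$ because each $N_Z$ did. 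Here one uses that $V_Z^\perp={\rm span}\{X_J',~\ell-q<J\le (p'-q')-(m-q)\}$ is the same subspace for all $Z$ once the frame is fixed appropriately along the chain, and that $N_\sharp\in N(\ell',m')$ because each $N_Z$ is a null space.

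Finally, to produce the asserted \emph{smooth} adapted frame: starting from any smooth $S^{\ell'}_{q',p'}$-frame adapted to $f$ (which exists by Proposition~\ref{pfaffian}), I would perform a smooth rotation of the vectors $\{X_J'~:~\ell-q<J\le(p'-q')-(m-q)\}$ — which preserves all the normalizations \eqref{last-theta}–\eqref{last-eq-hat} of Proposition~\ref{pfaffian}, since those only constrain the $X_J'$ with $J\le\ell-q$ or $J>2(\ell'-q')-(\ell-q)$ and the combinations $\3\Theta_a^{~J}$ — so that the first $d_\sharp$ of these basis vectors span $N_\sharp$. Because $N_\sharp$ is a smooth (indeed constant) subbundle of the constant bundle $V_Z^\perp$, such a rotation can be chosen smoothly, and it is an allowed frame change of the type discussed before Definition~\ref{changes}. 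With this frame, the conclusion $E_\sharp\subset f(Z)+E_{V_Z}+N_\sharp$ for all $E\in\Lambda_Z^\ell$ is exactly Lemma~\ref{null sp} with $N_Z$ replaced by the now-fixed $N_\sharp$. The main obstacle I anticipate is the propagation step: making precise that the description of $N_Z$ obtained from the $E_\sharp$'s on one maximal complex submanifold agrees with the one obtained from a neighboring submanifold along their $(\ell-1)$-dimensional intersection, i.e. that there is genuinely no monodromy, and handling the passage between generic and non-generic points so that the fixed $N_\sharp$ works everywhere and not merely on the generic locus.
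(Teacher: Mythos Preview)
Your plan has the right architecture---fix a null space at one point and propagate along chains---but the propagation step, which you yourself flag as the main obstacle, is not actually carried out, and the sketch you give does not work as written.

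The difficulty is that your ``intrinsic'' characterization of $N_Z$ is not intrinsic: you define it as the minimal subspace of $V_Z^\perp$ with $E_\sharp\subset f(Z)+E_{V_Z}+N_Z$, but $V_Z$ (and hence $E_{V_Z}$) is only defined once an adapted frame at $f(Z)$ has been chosen, and different adapted frames give different $V_Z$'s. So $N_Z$ is a priori frame-dependent, and your smoothness/constancy argument collapses. Moreover, your chain argument compares ``two descriptions of the ambient null space'' at a \emph{single} overlap point $Z\in Gr(q,E_{\alpha-1}\cap E_\alpha)$; but at a fixed $Z$ there is only one $N_Z$ (it uses all $E\in\Lambda_Z^\ell$, not just $E_{\alpha-1}$ or $E_\alpha$), so nothing is being compared. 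What is missing is a mechanism for showing that $N_{Z}$ and $N_{Z'}$ agree for \emph{distinct} points $Z,Z'$ lying on a common cycle, and your outline does not supply one.

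The paper proceeds differently. It does not attempt to make $N_Z$ intrinsic; instead it \emph{fixes} $N_\sharp:=N_{Z_0}$ at one generic point and then, moving along a $(q{+}1)$-confined subgrassmannian $Gr(q,F)$ with $F=Z_0+\mathbb C\,\3X_1$ (not along $\ell$-cycles with $(\ell{-}1)$-overlaps as in Lemma~\ref{chain}), it \emph{constructs} an adapted frame at the new point $Z_1$ for which $N_\sharp$ still works. The key technical input is twofold: first, one uses $\pi_{N_\sharp}(E_\sharp)=N_\sharp$ (surjectivity) to transport the null space; second, one exhibits $V_{Z_1}$ explicitly by producing null vectors $T',\tilde T'$ orthogonal to $N_\sharp$ so that $V_{Z_1}=\text{span}\{T'\pm\tilde T',X_j'\}$ satisfies both $N_\sharp\subset V_{Z_1}^\perp$ and $E_\sharp\subset f(Z_1)+E_{V_{Z_1}}+N_\sharp$. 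Iterating along chains of $(q{+}1)$-cycles then reaches every point. Smoothness is obtained not by rotating onto a ``constant subbundle $N_\sharp$'' as you suggest, but by introducing the span $L_Z:=\text{span}\{E_\sharp:E\in\Lambda_Z^\ell\}$, observing it has constant dimension on an open set and hence forms a smooth bundle containing $N_\sharp$, and choosing the frame inside $L_Z$.
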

\begin{proof}
Fix a generic point $Z_0\in S_{q,p}^\ell$ and an $S_{q',p'}^{\ell'}$-frame adapted to $f$ at $f(Z_0)$.
Let
$$N_\sharp:=N_{Z_0}\subset V^\perp_{Z_0}$$
be as in Lemma~\ref{null sp}
and let $Gr(q, F)$ be a submanifold of $S^{\ell}_{q,p}$ containing $Z_0$ such that $\dim F=q+1$. Assume
that
$$F=Z_0+\mathbb{C}\3 X_1.$$
Choose a generic point $Z_1\in Gr(q, F)$. Since $Z_0\in Gr(q, F)$, we obtain
$$\pi_{N_\sharp}( E_\sharp)=N_\sharp.$$
for all generic $  E\in \Lambda_F.$
Then by the same argument as for $Z_0$, we can show that
$$\pi_{N_\sharp}(W_\sharp)=N_\sharp$$
for all generic $ W\in \Lambda_{Z_1}.$

Choose vectors $T$ and $T'$ such that
$$F=Z_0+\mathbb{C}\3 X_1=Z_1+\mathbb{C}T.$$
By Lemma~\ref{null sp}, we obtain
$$F_\sharp\subset f(Z_0)+\3X_1'+N_\sharp$$
and hence
there exists $T'$ such that
$$f(Z_0)+\mathbb{C}\3 X_1'=f(Z_1)+\mathbb{C}T'\mod N_\sharp.$$
Let $W\in \Lambda_F^\ell$. We may assume that
$$W=F+\text{\rm span}\{\3 X_j,~j=2,\ldots,\ell-q\}.$$
Then in view of \eqref{tangent''}, we obtain
$$d Z'_a=\Theta_a^{~1}T'+\sum_{j=2}^{\ell-q}\Theta_a^{~j}\3 X_j'\mod f(Z_1)+N_\sharp$$
and therefore
$$W_\sharp\subset f(Z_1)+\text{\rm span}\{T', \3 X_j', ~2\leq j\leq \ell-q\} +N_\sharp.$$
Since $W$ is arbitrary, we obtain
$$W_\sharp\subset f(Z_1)+\text{\rm span}\{T', X_j', ~2\leq j\leq \ell-q\text{   or   }j>p'-q'-(m-q)\} +N_\sharp.$$
for all $W\in \Lambda_{F}^\ell.$ Since $\dim F=q+1,$ there exists a null vector $\tilde T$ such that
$\{T+\tilde T, T-\tilde T, X_j,~j=2,\ldots,\ell-q\}$ spans the complex tangent space of $S_{q,p}^\ell$ at $Z_1$.
Then by continuity of the fundamental forms together with \eqref{tangent''}, we can choose a null vector $\tilde T'$ orthogonal to
$N_\sharp$ such that
$$W_\sharp\subset f(Z_1)+\text{\rm span}\{T',\tilde T', X_j', ~2\leq j\leq \ell-q\text{   or   }j>p'-q'-(m-q)\} +N_\sharp$$
for any $W\in \Lambda_{Z_1}^\ell$ containing $Z_1+\mathbb{C}\tilde T$.
Let
$$V_{Z_1}=\text{\rm span}\{T'+\tilde T',T'-\tilde T', X_j', ~2\leq j\leq\ell-q\text{   or   }j>p'-q'-(m-q) \}.$$
Then by the same argument, we can show that
$$E_\sharp\subset f(Z_1)+E_{V_{Z_1}}+N_\sharp,\quad \forall E\in \Lambda_{Z_1}^\ell.$$

By Lemma~\ref{flat}, every point in $S_{q,p}^\ell$ is connected with $Z_0$ by a chain consisting of the form $Gr(q, F_k)$, $k=1,2,\ldots,K$ for some $K$. Moreover, we can choose a chain such that $\dim F_k=q+1$ for all $k$.
Hence by iterating this process along chains,
and applying Lemma~\ref{chain}, we can show that
for each $Z\in S_{q,p}^\ell$, there exists a choice of $S_{q',p'}^{\ell'}$-frame adapted to $f$
that satisfies the conditions in the lemma.

%
%


Now consider a linear subspace $L_{Z_1}$ spanned by $\{E_\sharp:E\in \Lambda_{Z_1}^\ell\}.$
Since $Gr(q', E_\sharp)\subset S^{\ell'}_{q',p'}$ for all $E\in \Lambda_Z^\ell$, we obtain
$$L_{Z_1}\subset \text{\rm span}\{Z'_a, X'_J\},$$
where $\{Z_a',X_J', Y_a'\}$ is an $S_{q',p'}^{\ell'}$-frame at $f(Z_1)$.
Since $f$ is smooth, $\dim L_{Z_1}$ is constant on an open set of $S_{q,p}^\ell$ and therefore $\bigcup_{Z\in S^{\ell}_{q,p}}L_{Z}$ is a smooth vector bundle over an open set of $S^{\ell}_{q,p}$.
Furthermore, since $Gr(q', E_\sharp)\subset S^{\ell'}_{q',p'}$ for all $E\in \Lambda_Z$, we obtain
$$L_{Z}\subset \text{\rm span}\{Z'_a, X'_J\},$$
where $\{Z_a',X_J', Y_a'\}$ is an $S_{q',p'}^{\ell'}$-frame at $f(Z)$ satisfying the conditions in the lemma.

Since
$$N_\sharp\subset L_{Z},~\forall Z\in S^{\ell}_{q,p},$$
we can choose a smoothly varying $S^{\ell'}_{q',p'}$-frame adapted to $f$ such that
$$L_{Z}=f(Z)+\text{\rm span}\{X_1',\ldots,X_{\ell-q}',X_{p'-q'-(m-q)+1}',\ldots,X_{p'-q'}'\}+N_\sharp,$$
which completes the proof.
\end{proof}

\section{partial rigidity and the proof of Theorems}\label{s-rigid}
In this section, we will prove Theorem~\ref{main} and Corollary~\ref{main-cor} under the assumption that $\Phi_1^{~1}=\phi_1^{~1}.$ The same argument works for the case of $\Phi_1^{~1}=-\phi_1^{~1}$ to complete the proof.
\medskip

Let $N_\sharp$ be a subspace as in Lemma~\ref{frame choice}.
\begin{lemma}\label{Za}
There exists an $S^{\ell'}_{q',p'}$-frame adapted to $f$ such that
$$dZ_a'=0\mod \text{\rm span} \{Z_a',~a>q\}+N_\sharp,~\forall a>q.$$
\end{lemma}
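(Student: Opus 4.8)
The strategy is to take the $S^{\ell'}_{q',p'}$-frame adapted to $f$ provided by Lemma~\ref{frame choice}, for which $E_\sharp\subset f(Z)+E_{V_Z}+N_\sharp$ for all $E\in\Lambda_Z^\ell$, and to kill the unwanted components of $dZ_a'$ for $a>q$ by a further frame change that preserves all the normalizations already achieved. First I would recall from \eqref{tangent''} and \eqref{last-theta}--\eqref{last-eq} that, for an adapted frame,
$$
dZ_a'=\sum_{j\le \ell-q}\Theta_a^{~j}\,\3X_j'+\sum_{\ell-q<J\le p'-q'-(m-q)}\Theta_a^{~J}X_J'+\Psi_a^{~b}Z_b'\mod Z',
$$
and that for $a>q$ the forms $\Theta_a^{~J}$ with $1\le J\le\ell-q$ or $J>2(\ell'-q')-(\ell-q)$ vanish identically by Proposition~\ref{pfaffian} (more precisely by \eqref{last-theta} together with the vanishing convention $\theta_a^{~J}=0$ for $a>q$). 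Hence for $a>q$ the derivative $dZ_a'$ is already a combination of $Z_b'$, of the $X_J'$ with $\ell-q<J\le\ell-q+n_q$ coming from $W_Z$, and of $X_J'$ landing in $V_Z^\perp$; what must be shown is that the genuinely surviving directions all lie inside $N_\sharp$.

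The key step is to feed the conclusion of Lemma~\ref{frame choice} into this expansion. Applying it at a generic $Z$ and to a maximal null space $E\in\Lambda_Z^\ell$ with $E=Z+\mathrm{span}\{\3X_j,\ j=1,\dots,\ell-q\}$, the tangent space $f_*(T_Z Gr(q,E))$ lies in $T_{f(Z)}Gr(q',E_\sharp)$, so in \eqref{tangent''} every coefficient direction appearing in $dZ_a'$ modulo $f(Z)$ must lie in $E_\sharp\subset f(Z)+E_{V_Z}+N_\sharp$. For $a>q$ the $E_{V_Z}$-part is absent (those $\Theta_a^{~j}$ vanish), so the surviving part of $dZ_a'$ lies in $N_\sharp$ modulo $\mathrm{span}\{Z_b'\}$ and modulo the directions that are forced to vanish. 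Letting $E$ vary over all of $\Lambda_Z^\ell$ and invoking Lemma~\ref{not-flat} (so that the $\Theta_a^{~J}$, viewed as pullbacks, are controlled on all of $T^{1,0}_ZS^\ell_{q,p}$, not just on one $Gr(q,E)$), one gets $dZ_a'\in \mathrm{span}\{Z_b',\ b\le q'\}+N_\sharp$ for $a>q$. It then remains to absorb the $Z_b'$ with $b\le q$: here I would use a residual change of position on the target frame of the form $\2Z_a'=Z_a'+(\text{lin.\ comb.\ of }Z_b',\ b\le q)$ for $a>q$ — equivalently exploit that $\Psi_a^{~b}$ for $a>q,\ b\le q$ is a semibasic one-form that, by the structure equations \eqref{seq} and the already-normalized $\Theta$'s, can be made to vanish — to arrive at $dZ_a'\equiv 0\mod \mathrm{span}\{Z_b',\ b>q\}+N_\sharp$.

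The main obstacle I anticipate is bookkeeping the two different "extra" sources of target directions simultaneously: the block $W_Z$ (sitting in $V_Z^\perp$ and detected by the independence condition \eqref{hat-theta-indep}) and the block $N_\sharp$ from Lemma~\ref{frame choice}, together with checking that the frame change used to eliminate the $Z_b'$ ($b\le q$) terms is compatible with — i.e.\ does not destroy — the adaptedness conditions \eqref{last-theta}--\eqref{last-eq} and the containment $N_\sharp\subset V_Z^\perp$. Concretely, one must verify that the allowed residual frame changes (changes of position mixing only the $Z_a'$, $a>q$, with $Z_b'$, $b\le q$, and compensating changes in the $Y_a'$) act on the $\Theta_a^{~J}$ only through terms proportional to $\Phi$, hence preserve everything modulo $\phi$; this is exactly the type of computation already carried out for the frame changes in \S\ref{preli} and at the end of \S4, so it should go through, but it is the step that needs the most care.
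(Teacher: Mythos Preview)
Your plan for the $\Theta$-part is essentially the paper's: from Lemma~\ref{frame choice} one gets, for $a>q$, that $\Theta_a^{~J}=0$ whenever $J$ is outside the $N_\sharp$-range $\ell-q<J\le \ell-q+n_\sharp$ (and its mirror), so the $X_J'$-component of $dZ_a'$ already lands in $N_\sharp$. The gap is in your treatment of $\Psi_a^{~\b}$ for $a>q$, $\b\le q$.

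Your option (a), the change of position $\2Z_a'=Z_a'+c_a^{~\b}Z_\b'$, does not work: it transforms $\Phi_a^{~b}$ into $\Phi_a^{~b}+c_a^{~\g}\Phi_\g^{~b}=c_a^{~\g}\phi_\g^{~b}$, which is generically nonzero, so the normalization $\Phi_a^{~b}=\phi_a^{~b}$ of Lemma~\ref{eds} is destroyed. Your option (b) points at the right mechanism but misses the relevant equation and the key new input. The paper does \emph{not} arrange $\Psi_a^{~\b}$ to vanish by a frame change; it shows $\Psi_a^{~\b}=0$ outright, as follows. Differentiating the adapted relation $\Theta_a^{~j}=0$ for $a>q$ had already produced \eqref{Psi-a}:
\[
\Psi_a^{~\b}\wedge\theta_\b^{~j}+\sum_{\ell-q<K\le \ell'-q'}\Theta_a^{~K}\wedge\bigl(\Omega_K^{~j}+\Omega_{2(\ell'-q')-K+1}^{~j}\bigr)=0.
\]
Now feed in two facts. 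First, the one you did not use: $N_\sharp$ is a \emph{fixed} subspace (Lemma~\ref{frame choice}), so $d\3X_K'=0\bmod N_\sharp$ for $\ell-q<K\le \ell-q+n_\sharp$, which gives $\Omega_K^{~j}+\Omega_{2(\ell'-q')-K+1}^{~j}=0$ for those $K$. Second, your own observation that $\Theta_a^{~K}=0$ for the remaining $K$ in the sum. The sum in \eqref{Psi-a} therefore vanishes, leaving $\Psi_a^{~\b}\wedge\theta_\b^{~j}=0$ for every $j$; since $\Psi_a^{~\b}$ is independent of $j$ and $p-q>1$, this forces $\Psi_a^{~\b}=0$. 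No further frame change is required, and the anticipated ``compatibility check'' for such a change is moot. In short: replace your Step~6 by the direct vanishing argument via \eqref{Psi-a}, and make explicit that the constancy of $N_\sharp$ is what kills the $\Omega$-terms.
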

\begin{proof}
Let $\{Z_a', X_J', Y_a'\}$ be an $S^{\ell'}_{q',p'}$-frame adapted to $f$ satisfying the condition in Lemma~\ref{frame choice} such that
\begin{equation}\label{vanish-0}
N_\sharp={\rm span }\{\3 X_J', ~J=\ell-q+1, \ldots,\ell-q+n_\sharp\},
\end{equation}
where
$$n_\sharp=\dim N_\sharp$$
and
$$\3 X_K'=X_K'+X_{2(\ell'-q')-K+1}',~K=\ell-q+1, \ldots,\ell-q+n_\sharp.$$
Since $N_\sharp$ is a fixed subspace, we obtain
$$d\3 X_K'=d X_K'+dX_{2(\ell'-q')-K+1}'=0\mod N_\sharp,\quad K=\ell-q+1, \ldots,\ell-q+n_\sharp,$$
which in view of \eqref{differential} implies
\begin{equation}\label{vanish-0}
\Omega_K^{~j}+\Omega^{~j}_{~2(\ell'-q')-K+1}=0,~ j=1,\ldots,\ell-q,~ K=\ell-q+1,\ldots,\ell-q+n_\sharp.
\end{equation}
Moreover, since
$$f(Gr(q, E))\subset Gr(q',f(Z)+E_{V_Z}+N_\sharp)$$
with
$$E_{V_Z}\subset V_Z=\text{\rm span}\{X_1',\ldots,X_{\ell-q}',X_{p'-q'-(m-q)+1}',\ldots,X_{p'-q'}'\},$$
we obtain
\begin{equation}\label{vanish-2}
\Theta_a^{~J}=0, ~J=\ell-q+n_\sharp+1,\ldots,2(\ell'-q')-(\ell-q)-n_\sharp.
\end{equation}

Let $a>q.$
Then \eqref{Psi-a} becomes
$$\Psi_a^{~\b}\wedge\theta_\b^{~j}=0.$$
Since $\Psi_a^{~\b}$ is independent of $j$ and $p-q>1$, we obtain
$$\Psi_a^{~\b}=0,\quad \forall \b.$$
Therefore by Proposition~\ref{pfaffian} and \eqref{vanish-2},
we obtain
$$dZ_a'=\sum_{\ell-q<J\leq \ell-q+n_\sharp } \Theta_a^{~J}\3 X_J'~\mod  \text{\rm span} \{Z_a',~a>q\},$$
which completes the proof.
\end{proof}

By Lemma~\ref{Za}
we obtain
$$dZ_a'=0~\mod  \text{\rm span} \{Z_a',~a>q\}+N_\sharp.$$
Since $N_\sharp$ is fixed,
${\rm span}\{Z_a',~a>q\}+N_\sharp$ is also fixed.
Fix a generic point $Z_0\in S^{\ell}_{q,p}$. We may assume that
$$Z_0=\text{\rm span}\{\3 e_\a, ~\a=1,\ldots,q\},\quad f(Z_0)=\text{\rm span}\{\3 e_a', ~a=1,\ldots,q'\}$$
for
$$\3 e_\a:=e_\a+e_{p+q-\a+1},~\a=1,\ldots,q,\quad \3 e_a':=e'_a+e'_{p'+q'-a+1},~a=1,\ldots,q'$$
where $e_1,\ldots,e_{p+q}$ and $e'_1,\ldots,e'_{p'+q'}$ are standard basis of $\mathbb{C}^{p+q}$ and $\mathbb{C}^{p'+q'}$,
respectively.
Assume further that
\begin{equation}\label{N sharp}
N_\sharp= \text{\rm span}\{\3e'_{q'+\ell-q+1},\ldots,\3e'_{q'+\ell-q+n_\sharp}\}
\end{equation}
and
\begin{equation}\label{fixed space}
{\rm span}\{Z_a',~a>q\}+N_\sharp= \text{\rm span}\{\3e'_{q+1},\ldots,\3e'_{q'}\}+\text{\rm span}\{\3e'_{q'+\ell-q+1},\ldots,\3e'_{q'+\ell-q+n_\sharp}\},
\end{equation}
where
$$\3 e_{q'+\ell-q+J}':=e'_{q'+J}+e'_{q'+2(\ell'-q')-J+1},~J=1,\ldots,n_\sharp.$$
Since $ V_Z$ is orthogonal to $f(Z)+N_\sharp$ with respect to $\langle~,\rangle_{\ell',m'}$,
after suitable frame changes by rotation, we may assume that
$V_Z$ is orthogonal to $\text{\rm span}\{\3 e'_a,~a>q\}+N_\sharp$ with respect to the standard Euclidean metric.

In standard coordinates of $Gr(q', p')$ centered at $f(Z_0)$, we will write $f$ as a $q'\times p'$ matrix form
$f=(f_0, f_N),$
where
\begin{equation}\label{fix Z0}
f_0:S^{\ell}_{q,p}\to Gr(q',N_\sharp^\perp),~ f_N:S^{\ell}_{q,p}\to Gr(q',f(Z_0)+N_\sharp).
\end{equation}
Note that
$$f(Z)=f_0(Z)\mod N_\sharp.$$

\begin{lemma}\label{flat}
For each $Gr(q, E)\subset S^{\ell}_{q,p}$ with $\dim E=\ell$, there exists a subspace $ E'\subset N_\sharp^\perp$ of dimension $\ell$ orthogonal to $\text{\rm span}\{\3 e_{q+1}',\ldots,\3 e_{q'}'\}$ such that
$$f_0(Gr(q,E))=Gr(q , E')\oplus \text{\rm span}\{\3e'_{q+1},\ldots,\3e'_{q'}\}.$$
Furthermore, if $\ell-q>1,$ then $f_0$ modulo $\text{\rm span}\{\3e'_{q+1},\ldots,\3e'_{q'}\}$ is a standard isomorphism.
\end{lemma}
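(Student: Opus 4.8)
The plan is to restrict $f$ to the single maximal complex submanifold $Gr(q,E)$, to split off from its image the fixed $(q'-q)$-plane $F_0:=\text{\rm span}\{\3e'_{q+1},\dots,\3e'_{q'}\}$, and then to use the differential of $f$ together with the rigidity of Grassmannians of rank $\ge 2$ to identify the remaining map with a standard isomorphism.

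\emph{Step 1: the image lies in a standard subgrassmannian containing $F_0$.} For $Z\in Gr(q,E)$ the computation around \eqref{E'span} gives $f(Z)+E_{V_Z}\subset E_\sharp$, while Lemma~\ref{frame choice} gives $E_\sharp\subset f(Z)+E_{V_Z}+N_\sharp$; since $E_{V_Z}\subset V_Z\subset N_\sharp^{\perp}$ and $N_\sharp$ is a fixed subspace, the orthogonal projection $\pi_{N_\sharp^{\perp}}$ sends $E_\sharp$ onto $f(Z)+E_{V_Z}=:\3E_\sharp$, a subspace of dimension $q'+(\ell-q)$ which is independent of $Z\in Gr(q,E)$ because $E_\sharp$ is. By Lemma~\ref{Za} and the normalizations \eqref{N sharp}, \eqref{fixed space}, the $(q'-q)$-plane $\text{\rm span}\{Z_b':b>q\}$ stays inside the fixed plane $F_0+N_\sharp$, hence $\pi_{N_\sharp^{\perp}}(\text{\rm span}\{Z_b':b>q\})=F_0$ and $F_0\subset f_0(Z)\subset\3E_\sharp$ for all $Z\in Gr(q,E)$. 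Let $E'$ be the Euclidean orthogonal complement of $F_0$ in $\3E_\sharp$, so that $E'\subset N_\sharp^{\perp}$, $E'\perp F_0$ and $\dim E'=\ell$; then $f_0(Z)=F_0\oplus g_0(Z)$ with $g_0(Z)\in Gr(q,E')$, i.e.\ $f_0(Gr(q,E))\subset Gr(q,E')\oplus F_0$, and everything is reduced to studying $g_0\colon Gr(q,E)\to Gr(q,E')$.

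\emph{Step 2: $g_0$ is an immersion, hence surjective.} On $Gr(q,E)$ the contact forms $\phi_\a^{~\b}$ vanish, so by Proposition~\ref{pfaffian} (which gives $\Theta_a^{~j}=\theta_a^{~j}$ for $1\le j\le\ell-q$ and $\3\Theta_a^{~J}=0$) together with the $N_\sharp$-normalizations \eqref{vanish-0} and \eqref{vanish-2}, the structure equation \eqref{differential} reduces on $Gr(q,E)$ to
$$dZ_a'\equiv\sum_{j=1}^{\ell-q}\theta_a^{~j}\,\3X_j'\pmod{Z'+N_\sharp},\qquad a\le q,$$
with $dZ_a'\equiv 0\pmod{Z'+N_\sharp}$ for $a>q$ by Lemma~\ref{Za}. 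Since $\{\theta_\a^{~j}:\a\le q,\ 1\le j\le\ell-q\}$ is a coframe of $Gr(q,E)$ and $\3X_1',\dots,\3X_{\ell-q}'$ are linearly independent modulo $Z'+N_\sharp$, this identity shows $(g_0)_{*}$ is injective wherever the adapted frame is defined, so $g_0$ is a holomorphic immersion on a dense open set. As $\dim_{\mathbb C}Gr(q,E)=q(\ell-q)=\dim_{\mathbb C}Gr(q,E')$, $g_0$ is generically finite and therefore surjective, giving $f_0(Gr(q,E))=Gr(q,E')\oplus F_0$. For a non-generic $E$ the same conclusion follows from the generic case by the chain/continuity argument used in the proof of Lemma~\ref{frame choice}.

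\emph{Step 3: standardness when $\ell-q>1$.} Now $Gr(q,E)$ is an irreducible Hermitian symmetric space of rank $\min(q,\ell-q)\ge2$, and the displayed identity says exactly that $(g_0)_{*}$ carries the cone of rank-one homomorphisms in $\mathrm{Hom}(Z,E/Z)$ — the variety of minimal rational tangents of $Gr(q,E)$, i.e.\ the tangent directions of its lines — onto that of $Gr(q,E')$, sending the rank-one tensor $\3X_j\otimes Z_\a^{*}$ to $\3X_j'\otimes(Z_a')^{*}$ without transposing the factors. Hence $g_0$ preserves the variety-of-minimal-rational-tangents structure, so by the rigidity theorem of Mok~\cite{M08} it is a standard map; being moreover a surjection between Grassmannians of $q$-planes in $\ell$-dimensional spaces, it is a standard isomorphism, which is the assertion that $f_0$ modulo $\text{\rm span}\{\3e'_{q+1},\dots,\3e'_{q'}\}$ is a standard isomorphism. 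The main obstacle is this last step: one must read off from the moving-frame identity the intrinsic fact that the induced map respects the minimal-rational-curve structure, so that Mok's theorem applies (and, when $2q=\ell$, rules out the dual map); the careful bookkeeping of the index ranges imposed by the $N_\sharp$-normalization and the reduction of an arbitrary $E$ to a generic one are the remaining points that need care.
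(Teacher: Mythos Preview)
Your proof is correct and follows the same overall architecture as the paper's: split off the fixed $(q'-q)$-plane $F_0$, reduce to a map $g_0\colon Gr(q,E)\to Gr(q,E')$ between equidimensional Grassmannians, verify that $g_0$ respects the variety of minimal rational tangents, and invoke Mok's theorem~\cite{M08}. The difference lies in how VMRT preservation is checked. The paper argues at the level of curves: for a generic $(q+1)$-dimensional $F\subset E$ through $Z$ it writes $F=\bigcap_j F_j$ as an intersection of $\ell-q-1$ generic maximal null spaces $F_j\in\Lambda_Z^\ell$, uses Lemma~\ref{frame choice} to confine $f(Gr(q,\bigcap_j F_j))$ inside $Gr(q',f(Z)+\bigcap_j F_{j,V_Z}+N_\sharp)$, checks that $\dim\bigcap_j F_{j,V_Z}=1$, and concludes that $g_0$ sends lines of $Gr(q,E)$ to lines of $Gr(q,E')$. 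You instead read off from the adapted-frame identity $dZ_\a'\equiv\sum_j\theta_\a^{~j}\3X_j'\pmod{Z'+N_\sharp}$ that $(g_0)_*$ is, in the chosen frames, the identity matrix on $\mathrm{Hom}(Z,E/Z)$, so it tautologically preserves the rank-one cone. Your route is shorter and handles the surjectivity in Step~2 uniformly, without the separate $\ell-q=1$ discussion the paper needs; the paper's intersection argument has the advantage of exhibiting the preservation of actual minimal rational curves rather than just their tangent directions, and of making transparent why the $(q+1)$-confined subgrassmannians are carried to $(q+1)$-confined subgrassmannians, which is used again later in \S\ref{s-rigid}. Your remark that the frame identity ``does not transpose the factors'' is a genuine addition: it disposes of the dual automorphism in the borderline case $2q=\ell$, a point the paper leaves implicit in its citation of~\cite{M08}.
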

\begin{proof}
Choose an $S_{q',p'}^{\ell'}$-frame adapted to $f$ satisfying the condition in Lemma~\ref{Za}.
Then we obtain
$$\text{\rm span}\{Z_a',~ a>q\}=\text{\rm span}\{\3 e'_{q+1},\ldots,\3 e'_{q'}\}\mod N_\sharp,$$
which implies
$$f_0(Z)=\text{\rm span}\{ Z_\a'',~ \a=1,\ldots,q\}\oplus\text{\rm span}\{\3 e'_{q+1},\ldots,\3 e'_{q'}\},$$
where $ Z_\a''$ is the orthogonal projection of $Z_\a'$ to $N_\sharp^\perp$ with respect to Euclidean inner product.
Let $Gr(q, E)\subset S^{\ell}_{q,p}$ be a maximal complex submanifold and let $ E'\subset N_\sharp^\perp$ be the smallest subspace orthogonal to $\text{\rm span}\{\3 e_{q+1}',\ldots,\3 e_{q'}'\}$ such that
$$f_0(Gr(q, E))\subset Gr(q, E')\oplus \text{\rm span}\{\3 e'_{q+1},\ldots,\3 e'_{q'}\}.$$
We will show that $ E'$ is of dimension $\ell$ and $f_0$ is a projective linear isomorphism if $\ell-q>1$.

If $\ell-q=1$, then \eqref{main-ineq} implies that $\ell'-q'=1$. Therefore
$$\dim N_\sharp=0$$
and
$$f_0(Z)=f(Z)\in \{Z'\in S^{\ell'}_{q',p'}: Z'_a=\3 e_a',~a>q\}.$$
Since maximal complex submanifolds in $S_{q',p'}^{\ell'}$ are of the form $Gr(q', L)$ with $\dim L=\ell'$, maximal complex submanifolds in $\{Z'\in S^{\ell'}_{q',p'}: Z'_a=\3 e_a',~a>q\}$ are of the form
$Gr(q, E')\oplus \text{\rm span}\{\3 e'_{q+1},\ldots,\3 e'_{q'}\}$ with
$$\dim E'=\ell'-(q'-q)=(\ell-q)+q=\ell.$$

Assume that $\ell-q>1.$ Choose a generic point $Z\in Gr(q, E)$.
By Lemma~\ref{frame choice}, $E_\sharp=f(Z)+E_{V_{Z}}+N_\sharp$ is the smallest subspace such that
$$f(Gr(q, E))\subset Gr(q',E_\sharp).$$
Since
$$E_{V_Z}\subset V_Z\subset N_\sharp^\perp,$$
the space
$$E':=\text{\rm span}\{Z_\a'',~ \a=1,\ldots,q\}+E_{V_{Z}}$$
is the smallest subspace such that
$$f_0(Gr(q, E))\subset Gr(q,  E')\oplus \text{\rm span}\{\3 e'_{q+1},\ldots,\3 e'_{q'}\}.$$
Hence we obtain
$$\dim  E'=q+\dim E_{V_Z}=\ell.$$

Let $Z$ be an arbitrary generic point of $Gr(q, E)$. Since
$$ f(Gr(q, F))\subset Gr(q',f(Z)+ F_{V_{Z}}+N_\sharp),~\forall F\in \Lambda_{Z},$$
we obtain
$$ f(Gr(q, F_1\cap F_2))\subset Gr(q',f(Z)+ F_{1,V_{Z}}+N_\sharp)\cap Gr(q',f(Z)+ F_{2,V_{Z}}+N_\sharp)$$
$$=Gr(q',f(Z)+ (F_{1,V_{Z}}\cap F_{2,V_{Z}})+N_\sharp),~\forall F_1, F_2\in \Lambda_{Z}.$$
Choose generic $F_1,\ldots,F_{\ell-q-1}\in \Lambda_{Z}$ such that
$\cap_j F_j\subset E$ and $\dim \cap_j F_j$ is $q+1.$ By induction, we obtain
$$ f(Gr(q, \bigcap_j F_j))\subset Gr(q', f(Z)+\bigcap_j(F_{j,V_{Z}})+N_\sharp).$$
Therefore
$$f_0(Gr(q, \bigcap_j F_j))\subset Gr(q', f_0(Z)+\bigcap_j(F_{j,V_{Z}})).$$
Since
$$\dim \bigcap_j (F_{j,V_{Z}})=\dim \left(\bigcap_j F_j\right)_{V_{Z}}=1,$$
$f_0$ preserves the variety of minimal rational tangents(See \cite{HwM99} for definition).
Since the rank of $Gr(q, E)$ is equal to $\min(\ell-q, q)$ which is strictly bigger than $1$ by our assumption, the rank of $Gr(q ,E')$ is also strictly bigger than $1$. Then by \cite{M08}, we can complete the proof.
\end{proof}

\begin{lemma}
There exists a $(p+q)$-dimensional subspace $L\subset N_\sharp^\perp$ orthogonal to $\text{\rm span}\{\3 e'_{q+1},\ldots,\3 e'_{q'}\}$ such that
$$\langle ~,\rangle_{\ell',m'}|_L=\langle~, \rangle_{\ell,m}$$
and
$$f_0(S^{\ell}_{q,p})\subset Gr(q, L)\oplus\text{\rm span}\{\3 e'_{q+1},\ldots,\3 e'_{q'}\}.$$
\end{lemma}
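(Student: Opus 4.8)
The plan is to take $L$ to be the linear span of all the spaces $E'$ furnished by Lemma~\ref{flat}, as $Gr(q,E)$ ranges over the maximal complex submanifolds of $S^{\ell}_{q,p}$, i.e. as $E$ ranges over the $\ell$-dimensional null spaces of $\langle~,\rangle_{\ell,m}$ in $\mathbb{C}^{p+q}$. By Lemma~\ref{flat} each such $E'$ lies in $N_\sharp^\perp$ and is orthogonal to $\text{\rm span}\{\3e'_{q+1},\ldots,\3e'_{q'}\}$, hence so is $L$; and every $Z\in S^{\ell}_{q,p}$, being a $q$-dimensional null space with $q\le\ell\le m$, is contained in some $\ell$-dimensional null space $E$, so $f_0(Z)\in Gr(q,E')\oplus\text{\rm span}\{\3e'_{q+1},\ldots,\3e'_{q'}\}\subset Gr(q,L)\oplus\text{\rm span}\{\3e'_{q+1},\ldots,\3e'_{q'}\}$. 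Thus $f_0(S^{\ell}_{q,p})\subset Gr(q,L)\oplus\text{\rm span}\{\3e'_{q+1},\ldots,\3e'_{q'}\}$ is immediate, and what remains is to show $\dim L=p+q$ and $\langle~,\rangle_{\ell',m'}|_L=\langle~,\rangle_{\ell,m}$.

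For those two claims I would propagate the information of Lemma~\ref{flat} along chains of maximal complex submanifolds. Assume $\ell-q>1$ (if $\ell-q=1$ one has $N_\sharp=0$ and $\ell'-q'=1$ by Lemma~\ref{flat}, and the argument below simplifies). Let $E_1,E_2$ be $\ell$-dimensional null spaces with $\dim(E_1\cap E_2)=\ell-1$; since $q<\ell$ we have $\ell-1\ge q$, so $Gr(q,E_1\cap E_2)=Gr(q,E_1)\cap Gr(q,E_2)$ is nonempty, and because $f_0$ restricts to a standard isomorphism on each $Gr(q,E_i)$, the image $f_0(Gr(q,E_1\cap E_2))$ equals $Gr(q,G')\oplus\text{\rm span}\{\3e'_{q+1},\ldots,\3e'_{q'}\}$ for a common $(\ell-1)$-dimensional subspace $G'\subset E_1'\cap E_2'$. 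Hence $\dim(E_1'+E_2')\le\ell+1$, with equality when $E_1\ne E_2$ because $f$ is an embedding and $f$ and $f_0$ agree modulo the fixed null space $N_\sharp$, which by Lemma~\ref{frame choice} is orthogonal to the $V_Z$-directions carrying the spaces $E_{V_Z}$, so $f_0$ does not identify distinct maximal complex submanifolds. Since $G'$ lies in the null space $E_1'$ and in the null space $E_2'$, it lies in the radical of $\langle~,\rangle_{\ell',m'}|_{E_1'+E_2'}$, so that restriction has rank at most $2$ and, when $E_1\ne E_2$, is non-degenerate of signature $(1,1)$ on a complement of $G'$ — exactly the structure of $\langle~,\rangle_{\ell,m}|_{E_1+E_2}$, which has radical $E_1\cap E_2$ and is otherwise hyperbolic.

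Now by Lemma~\ref{chain} a fixed $E_0$ can be joined to every $\ell$-dimensional null space by such a chain, and since the maximal complex submanifolds cover $S^{\ell}_{q,p}$ the $\ell$-dimensional null spaces span $\mathbb{C}^{p+q}$; fix a chain $E_0,\ldots,E_k$ with $\sum_jE_j=\mathbb{C}^{p+q}$. Building up $\sum_{i\le j}E_i$ and $\sum_{i\le j}E_i'$ in parallel and applying the preceding paragraph at each step, at the $j$-th step either $E_j'\subset\sum_{i<j}E_i'$ and the two partial sums and their inner-product restrictions are unchanged, or exactly one new direction is adjoined on each side together with its unique hyperbolic partner, in lockstep. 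This produces a linear isomorphism $\Lambda\colon\mathbb{C}^{p+q}\to L$ with $\Lambda(E)=E'$ for every $\ell$-dimensional null space $E$, such that $\Lambda^{*}\big(\langle~,\rangle_{\ell',m'}|_L\big)$ is a Hermitian form on $\mathbb{C}^{p+q}$ vanishing on every $\ell$-dimensional, hence on every $q$-dimensional, null space of $\langle~,\rangle_{\ell,m}$; such a form vanishes on the whole null cone of $\langle~,\rangle_{\ell,m}$ and is therefore a real multiple $c\,\langle~,\rangle_{\ell,m}$. In the present case $\Phi_1^{~1}=\phi_1^{~1}$ the signatures of the source and target Levi forms agree, which forces $c>0$ (the case $c<0$ is precisely $\Phi_1^{~1}=-\phi_1^{~1}$), so after rescaling $\Lambda$ by $c^{-1/2}$ — equivalently rescaling the frame $\{X_J'\}$ — we obtain $\langle~,\rangle_{\ell',m'}|_L=\langle~,\rangle_{\ell,m}$ and $\dim L=\dim\mathbb{C}^{p+q}=p+q$.

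The step I expect to be the main obstacle is making the simultaneous build-up of $\sum E_i$ and $\sum E_i'$ along a chain rigorous: the linear identification $E\to E'$ attached to each maximal complex submanifold by Lemma~\ref{flat} is determined only up to a scalar, so one must rule out a monodromy obstruction and check that these scalars can be chosen coherently so that a single $\Lambda$ exists. This is where one uses that $f$, and hence $f_0$ on the union of any chain, is single-valued, together with the connectivity of $S^{\ell}_{q,p}$ by chains of maximal complex submanifolds (Lemma~\ref{chain}) and the independence of $E_\sharp$ of the chain (Lemmas~\ref{null sp} and \ref{frame choice}). A secondary point that must be verified carefully is the non-collapsing statement $\dim(E_1'+E_2')=\ell+1$ for distinct adjacent $E_1,E_2$, which rests on $f$ being an embedding and on $f$ and $f_0$ agreeing modulo $N_\sharp\perp V_Z$.
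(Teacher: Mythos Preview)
Your approach differs substantially from the paper's. The paper fixes $L$ explicitly in terms of the adapted frame at the reference point $Z_0$ (so that $\dim L = p+q$ and the form restriction are immediate by inspection), and then spends the proof showing that the moving space $f_0(Z) + V_Z + Y''_Z$ equals this fixed $L \oplus \text{span}\{\3e'_{q+1},\ldots,\3e'_{q'}\}$ for every $Z$. This is done by a frame computation along $(q{+}1)$-confined subgrassmannians: one uses $f^\sharp$ on the domains $D_{Z_0}$ and $D_{Z_1}$ to pin down $V_{Z_1}$, then verifies $Y''_{Z_1} = Y''_{Z_0}$ modulo $f_0(Z_1) + V_{Z_1}$ by direct inner-product calculations in the standard basis. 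Your route---take $L$ as the span of all the $E'$ and build a global linear $\Lambda\colon \mathbb{C}^{p+q} \to L$ with $\Lambda(E) = E'$---is conceptually cleaner and makes the containment $f_0(S^\ell_{q,p}) \subset Gr(q,L) \oplus \text{span}\{\3e'_{q+1},\ldots,\3e'_{q'}\}$ automatic, while pushing all the content into the dimension count.

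The gap you yourself flag is real and not minor. Along a single chain $E_0,\ldots,E_k$ you can normalize the scalars and build $\Lambda$ on $\sum_i E_i$; the difficulty is the claim that $\Lambda(E) = E'$ for \emph{every} $\ell$-dimensional null space $E$, not only those in the chain. That claim is what forces your $L$ (the span of all $E'$) to coincide with $\mathrm{im}(\Lambda)$ and hence have dimension $p+q$ rather than something larger. But the existence of such a global $\Lambda$ is essentially the rigidity conclusion itself: once a single linear map sends each $E$ to its $E'$, one has (modulo the $N_\sharp$ factor) that $f_0$ is a standard embedding---which is exactly what the paper deduces \emph{from} the present lemma together with Ng's theorem. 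So as written the argument either presupposes what the lemma is meant to help prove, or needs an independent mechanism to control $E'$ for $E$ off the chain. ``Single-valuedness of $f$'' does not supply this: it does not by itself prevent the family $\{E'\}$ from spanning more than $p+q$ dimensions. The paper's explicit tracking of $V_Z$ and $Y''_Z$ along chains is precisely such a mechanism, and it genuinely requires the detailed frame analysis you are trying to bypass.

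Two smaller points. Your dichotomy ``either $E_j' \subset \sum_{i<j} E_i'$ or exactly one new direction is adjoined on each side'' also needs the implication $E_j \subset \sum_{i<j} E_i \Rightarrow E_j' \subset \sum_{i<j} E_i'$, which is again the global-$\Lambda$ issue in disguise (clear if $\Lambda$ exists, circular otherwise). And rescaling $\Lambda$ by $c^{-1/2}$ does not change $L$ or $\langle\,,\rangle_{\ell',m'}|_L$; what you should conclude is that the restriction has the correct signature and is therefore isometric to $\langle\,,\rangle_{\ell,m}$---but even the sign of $c$ has to be read off from the frame relations $\Theta_a^{~j} = \theta_a^{~j}$ of Proposition~\ref{pfaffian}, not asserted directly from $\Phi_1^{~1} = \phi_1^{~1}$.
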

\begin{proof}
Let $Z_0$, $f(Z_0)$ and $N_\sharp$ be fixed as before. Assume further that
$$V_{Z_0}=\text{\rm span}\{e'_{q'+j},~1\leq j\leq \ell-q~\text{or }(p'-q')-(m-q)<j\leq p'-q'\}$$
and
$$Y_{Z_0}'=\text{\rm span}\{\Check e'_a,~a=1,\ldots,q'\},$$
where
$$\Check e'_a:=e'_a-e'_{p'-q'-a+1},~a=1,\ldots,q'.$$
Let
$$L:=\text{\rm span}\{\3e'_\a,\Check e_\a',~\a=1,\ldots,q\}+V_{Z_0}.$$
Then $L$ is a $(p+q)$-dimensional subspace in $N_\sharp^\perp$ such that
$$\langle ~,\rangle_{\ell',m'}|_L=\langle~, \rangle_{\ell,m}.$$

Choose an $S_{q',p'}^{\ell'}$-frame $\{Z'_a, X_J', Y_a'\}$ adapted to $f$ that satisfies the conditions in Lemma~\ref{frame choice} and Lemma~\ref{Za}.
Define
\begin{align*}
Z'':&=\text{\rm span}\{Z''_\a,~\a=1,\ldots,q\},\\
X'_{Z}:&=\text{\rm span}\{X_J',~1\leq J\leq p'-q'\},\\
Y'_{Z}:&=\text{\rm span}\{Y_a',~a=1,\ldots, q'\}\\
Y_{Z}'':&={\rm span}\{Y''_\a, ~\a=1,\ldots,q\},
\end{align*}
where $Z_\a''$ and $Y''_\a$ are the orthogonal projections of $Z_\a'$ and $Y'_\a$ to $N_\sharp^\perp,$ respectively.
Note that
$$f_0(Z)=Z''\oplus\text{\rm span}\{\3e'_{q+1},\ldots,\3 e'_{q'}\} .$$
Choose an orthogonal complement
$X_{Z}''\subset X_Z'$ of $N_\sharp$ with respect to $\langle~, \rangle_{\ell',m'}$. After a frame change by rotation, we may assume $X_Z''$ is orthogonal to $\text{\rm span}\{\3 e'_{q+1},\ldots,\3 e_{q'}'\}+N_\sharp$ with respect to the standard Euclidean metric on $\mathbb{C}^{p'+q'}$ and that
\begin{equation}\label{VZ}
V_Z\subset X_Z''
,\quad\forall Z\in S^{\ell}_{q,p}.
\end{equation}
Since
$Z''+X''_Z+Y''_Z$ is an orthogonal complement of $\text{\rm span}\{\3 e_a',~q<a\leq q'\}$ in $N_\sharp^\perp$ with respect to $\langle~, \rangle_{\ell',m'}$, $f_0(Z)+X''_Z+Y''_Z$ is well-defined independently of the point $Z\in S_{q,p}^\ell$ and frames.

Since $S_{q',p'}^{\ell'}$-frame satisfies
$$dZ_a'=\Theta_a^{~J}X_J'+\Phi_a^{~b}Y_b'\mod Z',$$
Proposition~\ref{pfaffian} implies
\begin{equation}\label{image}
T^{1,0}_{f_0(Z)}f_0(S_{q,p}^\ell)\subset Hom_\mathbb{C}(Z'', (Z''+V_Z)/Z'')
\end{equation}
and
$$T_{f_0(Z)}f_0(S_{q,p}^\ell)\subset Hom_\mathbb{R}(Z'', (Z''+V_Z+Y_Z'')/Z'').$$
We claim that
$$f_0(Z)+V_Z+Y_Z''= L\oplus\text{\rm span}\{\3e'_{q+1},\ldots,\3 e'_{q'}\},\quad \forall Z\in S^{\ell}_{q,p}.$$
We will prove the claim for all $Z\in Gr(q, F)\subset S^{\ell}_{q,p}$, where $F$ satisfies $Z_0\subset F$ and $\dim F=q+1$. By Lemma~\ref{flat}, every point in $S_{q,p}^\ell$ is connected with $Z_0$ by a chain consisting of the form $Gr(q, F_k)$, $k=1,2,\ldots,K$ for some $K$. Moreover, we can choose a chain such that $\dim F_k=q+1$ for all $k$.
Hence by iterating this process along chains, we can complete the proof.




Let $Gr(q, F)$ be a submanifold of $S^{\ell}_{q,p}$ containing $Z_0$ such that $\dim F=q+1$. Assume
$$F=Z_0+\mathbb{C}\3e_{q+1},$$
where
$$\3 e_{q+1}:=e_{q+1}+e_{p}.$$
By applying Lemma~\ref{flat}, 
$$f_0(Gr(q,F))=Gr(q, F')\oplus \text{\rm span}\{\3 e_\a, ~q<a\leq q'\},$$
where
$$ F'=\text{\rm span}\{\3 e_\a', \3 e'_{q'+1},~\a=1,\ldots,q\}.$$
Fix a point $Z_1\in Gr(q, F)$ and assume that
$$f_0(Z_1)=\text{\rm span}\{\3e_1'+\3 e'_{q'+1},\3e_\a',~\a=2,\ldots,q\}\oplus \text{\rm span}\{\3 e_a',~ q<a\leq q'\}.$$
To prove the claim, it is enough to show that
$$\text{\rm span}\{\3e_1'+\3e'_{q'+1},\3e_\a',~\a=2,\ldots,q\}+V_{Z_1}+Y_{Z_1}''=L\mod \text{\rm span}\{\3e_a', ~q<a\leq q'\}.$$

Consider $f^\sharp$ on $\mathcal{P}_{Z_0}^\ell$. Then Lemma~\ref{flat} implies
$$f^\sharp(\mathcal{P}_{Z_0}^\ell)\subset f_0(Z_0)\oplus Gr(\ell-q, V_{Z_0})\oplus \text{\rm span}\{\3 e_a', ~a>q\}\oplus N_\sharp.$$
Since $\mathcal{P}_{Z_0}^\ell$ is the Shilov boundary of $D_{Z_0}$, where we define
$$D_{Z}:=\{E\in Gr(\ell, m):Z\subset E, ~\langle E, E\rangle_{\ell,m}\geq 0\},$$
we obtain
$$f^\sharp(D_{Z_0})\subset f_0(Z_0)\oplus Gr(\ell-q, V_{Z_0})\oplus \text{\rm span}\{\3 e_a', ~a>q\}\oplus N_\sharp.$$
Similarly, we obtain
$$f^\sharp(D_{Z_1})\subset f_0(Z_1)\oplus Gr(\ell-q, V_{Z_1})\oplus \text{\rm span}\{\3 e_a', ~a>q\}\oplus N_\sharp$$
and hence
$$f^\sharp(D_{Z_0}\cap D_{Z_1})\subset \left(f_0(Z_0)\cap f_0(Z_1)\right)\oplus Gr(\ell-q, V_{Z_1}\cap V_{Z_1})\oplus \text{\rm span}\{\3 e_a', ~a>q\}\oplus N_\sharp.$$
Since
$$D_{Z_0}\cap D_{Z_1}=F+\text{\rm span}\{e_j,~j=2,\ldots, p-q-1\},$$
we obtain
\begin{equation}\label{V-Z1}
V_{Z_1}\supset F'+ \text{\rm span}\{e_j',~1<j\leq \ell-q \text{  or  }(p'-q')-(m-q)< j<p'-q'\}.
\end{equation}

For an $S^{\ell'}_{q',p'}$-frame $\{Z_a', X_J', Y_a'\}$ adapted to $f$ at $f(Z_1)$, assume
$$X_1'+X_{p'-q'}'=\3e'_{q'+1}$$
so that
$$ F'=\text{\rm span}\{\3e_1'+\3e'_{q'+1},~\3e_\a',~2\leq \a\leq q\}+\mathbb{C}(X_1'+X_{p'-q'}').$$
Since $\{Z'_a, X_J', Y_a'\}$ is an $S^{\ell'}_{q',p'}$-frame at $f(Z_1)$, we obtain
$$\langle Z'_a,X'_J\rangle_{\ell',m'}=0,$$
i.e.
$$\langle \3e_1'+\3e'_{q'+1},X_{J}'\rangle_{\ell',m'}=\langle \3e_a',X_J'\rangle_{\ell',m'}=0,~a=2,\ldots,q'.$$
Hence we may assume that
$$X_1'=e'_{q'+1}+\sum_{q'+2\leq J<p'}b_Je'_{J}+\Check e_{1}',
\quad X_{p'-q'}'=e'_{p'}-\sum_{q'+2\leq J<p'}b_Je'_{J}-\Check e_{1}'
$$
modulo $f(Z_1)$ for some $b_J\in \mathbb{C}$.
Since
$$\langle X_1',X_{p'-q'}'\rangle_{\ell',m'}=0,$$
we obtain
\begin{equation}\label{b_J}
\sum_{q'+2\leq J\leq \ell'}|b_J|^2-\sum_{\ell'<J<p'}|b_J|^2=0.
\end{equation}
By \eqref{V-Z1}, we may assume
$$X_1'=e_{q'+1}'+T+\check e_{q'+1}',\quad X_{p'-q'}'=e'_{p'}-T-\check e_{q'+1}'\mod f(Z_1)$$
for some vector $T$ such that
$$\langle T, T\rangle_{\ell',m'}=\langle T, e'_{q'+j}\rangle_{\ell',m'}=0,~1<j\leq\ell-q~\text{or }(p'-q')-(m-q)< j<p'-q'.$$

We will show that
$$V_{Z_1}=\text{\rm span}\{e'_{q'+1}+\Check e'_1,e'_{p'}-\Check e'_1, e'_{q'+j},~1<j\leq \ell-q \text{  or  }(p'-q')-(m-q)< j<p'-q'\}\mod f(Z_1).$$
If $\ell'-q'=1$, then 
\eqref{b_J} becomes
$$-\sum_{\ell'<J<p'}|b_J|^2=0.$$
Therefore the conclusion follows.
Assume that $\ell'-q'>1.$
Consider a set $\{V_{s,t}:s,t\in \mathbb{C}\}\subset Gr(q', N_\sharp^\perp)$
defined by
$$V_{s,t}:=\text{\rm span}\{\3e_1'+s(\3e'_{q'+1}-\sqrt{-1}t \Check e'_{q'+1}),\3e_a',~a=2,\ldots,q'\}$$
with
$$\Check e'_{q'+1}:=e'_{q'+1}-e'_{p'}.$$
Then we obtain
$$V_{1,0}=f_0(Z_1)$$
and
$$\{V_{s,t}:(s,t)\in \mathbb{C}\times \mathbb{R}\}\subset S^{\ell'}_{q',p'}\cap Gr(q', N_\sharp^\perp).$$
Let $t\in \mathbb{R}$ be fixed. By Lemma~\ref{flat}, $f_0$ is linear on maximal complex submanifolds in $S^{\ell}_{q,p}$ and therefore we obtain
$$\{V_{s,t}:s\in \mathbb{C}\}=f_0\left(Gr\left(q, Z_0+\mathbb{C}(\3 e_{q+1}-\sqrt{-1}t \Check e_{q+1})\right)\right).$$
In particular, the curve $\{V_{1,t}:t\in \mathbb{R}\}$ is a submanifold of $f_0(S^{\ell}_{q,p})$ passing through $f_0(Z_1)$.
By differentiating $V_{1,t}$ with respect to $t$ at $t=0$, we obtain
$$\Check e_{q'+1}'\in f_0(Z_1)+V_{Z_1}+Y_{Z_1}''.$$
Since
$$\3 e'_{q'+1}\in f(Z_1)+V_{Z_1}$$
and
$$e'_{q'+1}=\frac12(\3 e'_{q'+1}+\Check e'_{q'+1}), \quad e'_{p'}=\frac12(\3e'_{q'+1}-\Check e'_{q'+1}),$$
this implies
$$e'_{q'+1},e'_{p'} \in f_0(Z_1)+V_{Z_1}+Y_{Z_1}''.$$
Since
$$\langle \3 e_1'+\3e'_{q'+1},e'_{q'+1}+\Check e'_1\rangle_{\ell',m'}
=\langle \3 e_1'+\3e'_{q'+1},e'_{p'}-\Check e'_1\rangle_{\ell',m'}=0$$
and
$$\langle \3 e_a',e'_{q'+1}+\Check e'_1\rangle_{\ell',m'}
=\langle \3 e_a',e'_{p'}-\Check e'_1\rangle_{\ell',m'}=0,~a=2,\ldots,q',$$
we obtain
$$e'_{q'+1}+\Check e'_1,e'_{p'}-\Check e'_1\in f(Z_1)+V_{Z_1},$$
i.e.
$$V_{Z_1}=\text{\rm span}\{e'_{q'+1}+\Check e'_1,e'_{p'}-\Check e'_1, e'_{q'+j},~1<j\leq \ell-q \text{  or  }(p'-q')-(m-q)< j<p'-q'\}\subset L.$$

Finally, we will show
$$Y_{Z_1}''=Y_{Z_0}''\mod f_0(Z_1)+V_{Z_1},$$
which will imply
$$f(Z_1)+V_{Z_1}+Y_{Z_1}''=L\oplus\text{\rm span}\{\3e'_a,~a=q+1,\ldots,q'\}$$
as desired.
Since
$$\langle e'_{q'+1}+\Check e'_1, \Check e'_\a\rangle_{\ell',m'}
=\langle e'_{p'}-\Check e'_1, \Check e_\a'\rangle_{\ell',m'}
=\langle e'_J,\Check e_\a'\rangle_{\ell',m'}=0,
~q'+1<J<p',$$
we obtain $Y_{Z_0}''=\text{\rm span}\{\Check e'_\a,~\a=1,\ldots,q\}$ is orthogonal to $X'_{Z_1}$ with respect to $\langle~,\rangle_{\ell',m'}.$ Since
$$\langle Z'_a,\Check e_b'\rangle_{\ell',m'}=\delta_{ab}$$
by direct computation,
$\{Z'_a, X_J' ,\Check e'_a\}$ is an $S^{\ell'}_{q',p'}$-frame at $f(Z_1)$.
Since
$$f(Z_0)+X''_{Z_0}+Y_{Z_0}''=f(Z_1)+X''_{Z_1}+Y_{Z_1}''$$
and $Y_{Z_0}''$ is orthogonal to $f_0(Z_1)+X'_{Z_1}$ with respect to $\langle~,\rangle'_{\ell',m'}$
we obtain
$$Y_{Z_1}''=Y_{Z_0}''\mod f(Z_1)+V_{Z_1}.$$
\end{proof}
\medskip



{\it Proof of Theorem~\ref{main}}~:
Define
$$f_{L}:=\pi_{L}\circ f.$$
Then $f_L:S^{\ell}_{q,p}\to \pi_L(S^{\ell'}_{q',p'})$ is a CR embedding.
Since $\pi_L(S^{\ell'}_{q',p'})$ is equivalent to $S^{\ell}_{q,p}$, by \cite{Ng}, $ f_L$ is an isomorphism between $Gr(q, p)$ and $Gr(q, L)$.
Hence $f$ decomposes into
$$(f_L\oplus \text{\rm span}\{\3 e_a', a=q+1,\ldots,q'\},f_N),$$
where $f_L$ is an isomorphism and $f_N$ is a holomorphic map into $Gr(q', f(Z_0)+N_\sharp)$ as desired.
\medskip

{\it Proof of Corollary~\ref{main-cor}}~:
Under the condition \eqref{main-ineq}, we obtain
$$\dim Gr(q, E)>\dim Gr(q', f(Z_0)+N_\sharp)$$
for $\ell$-dimensional space $E$. Since Grassmannian is of Picard number one and therefore holomorphic maps between them are either finite to one or constant(See \S11 of \cite{M89}), we conclude that $f_N$ restricted to each maximal complex subspace $Gr(q, E)$ is a constant map, which completes the proof.

\end{document}